\documentclass[11pt, reqno]{amsart}
\usepackage[margin=1in]{geometry}
\usepackage{amsmath}
\usepackage{amsthm}
\usepackage{amsfonts}
\usepackage{amssymb}

\usepackage{appendix}
\usepackage{xcolor}

\usepackage{subcaption}
\usepackage{pdfsync}

\usepackage{stackrel}

\usepackage{microtype}

\usepackage{hyperref}

\usepackage{graphicx}

\numberwithin{equation}{section}

\def\<{\langle}
\def\>{\rangle}

\def\ker{\mathrm{Ker}\,}

\def\ve{\varepsilon}

\DeclareMathOperator*{\esssup}{ess\,sup\,}

\def\dx{ \ dx }
\def\d{ \, d }

\def\R{\mathbb{R}}

\theoremstyle{plain}
\newtheorem{theorem}{Theorem}[section]

\newtheorem{proposition}[theorem]{Proposition}

\newtheorem{lemma}[theorem]{Lemma}
\newtheorem{corollary}[theorem]{Corollary}

\theoremstyle{definition}

\newtheorem{remark}[theorem]{Remark}

\title[Standing waves for Schr\"odinger equations...]{Standing waves for Schr\"odinger equations with Kato class potentials and $L^\infty$-bounded nonlinearities}

\author{Aleksander \'{C}wiszewski}
\address{\newline
Aleksander \'{C}wiszewski
\newline
ORCID-ID: 0000-0003-0662-2065
\newline
Faculty of Mathematics and Computer Science \newline Nicolaus Copernicus University \newline 87-100 Toru\'n, Poland}
\email{aleks@mat.umk.pl}

\author{Piotr Kokocki}
\address{\newline 
Piotr Kokocki
\newline
ORCID-ID: 0000-0002-7763-5624
\newline
Faculty of Mathematics and Computer Science \newline Nicolaus Copernicus University \newline 87-100 Toru\'n, Poland}
\email{pkokocki@mat.umk.pl}

\subjclass[2000]{47J35, 47J15, 37L05} 
\keywords{Semigroup, evolution equation, Conley index, bounded solution, connecting orbit, standing wave, Schr\"{o}dinger equation}

\begin{document}

	
\begin{abstract}
We establish the existence of standing waves for a nonlinear Schr\"{o}dinger
equation with potentials belonging to the Kato class and an $L^\infty$-bounded
nonlinearity, whose Lipschitz constant is smaller than the distance from zero
to the essential spectrum of the linear part. We consider both the nonresonant and resonant cases.
Our approach is based on the Conley index theory applied to study invariant sets of the associated parabolic semiflow. Using properties of the Schr\"{o}dinger semigroup with Kato-class potentials, which follow from its Feynman--Kac representation, we derive
a~priori estimates for bounded solutions in the $L^\infty$ and $L^2$ norms,
as well as regularity bounds in Sobolev spaces. As a consequence, we obtain conditions ensuring the existence of connecting orbits between stationary solutions, which in turn yield the
existence of nontrivial standing waves.
\end{abstract}
	
\maketitle
	
\section{Introduction}
In this paper, we investigate the existence of solutions to the elliptic equation
$$
-\Delta u + V(x)u - \lambda u = f(x,u), \quad x\in\R^N, \leqno{(E)_\lambda}
$$

\noindent where $\lambda$ is a real parameter, $V:\R^N\to\R$ is a potential, and $f:\R^N\times\R\to\R$ is a nonlinearity. Equation $(E)_\lambda$ arises as the stationary problem associated with the nonlinear Schr\"odinger equation
\begin{equation}\label{NLS}
	i\psi_t = -\Delta \psi + V(x)\psi + g(x,|\psi|^2)\psi, \quad t>0,
\end{equation}
which appears in various physical contexts, such as nonlinear optics and Bose--Einstein condensation.
Indeed, standing wave solutions of \eqref{NLS}, sought in the form
\[
\psi(t,x)=e^{-i\lambda t}u(x), \quad t>0, \ x\in\R^{N},
\]
reduce \eqref{NLS} to the elliptic equation $(E)_\lambda$ for the real-valued profile function $u$. The analysis of such equations becomes significantly more involved when the potential $V$ exhibits singularities or lacks regularity. In this work, we focus on potentials that satisfy the condition 
\begin{align}\label{def-k-3}
		\lim_{\ve\to 0^{+}} \left(\sup_{x\in\R^{N}} \int_{\{|x-y|\le\ve\}}\frac{|V(y)|}{|x-y|^{N-2}}\,dy\right) = 0 & \quad \text{if} \ \ N\ge 3;\\[3pt]
		\label{def-k-2}
		\lim_{\ve\to 0^{+}} \left(\sup_{x\in\R^{N}} \int_{\{|x-y|\le\ve\}} \ln|x-y|^{-1}|V(y)|\,dy\right) = 0 & \quad \text{if} \ \ N=2; \\[3pt] 
		\label{def-k-1}
		\sup_{x\in\R^{N}} \int_{\{|x-y|\le 1\}}|V(y)|\,dy <\infty & \quad \text{if} \ \ N=1.
	\end{align}
These conditions characterize the \emph{Kato class} of potentials $K_{N}$, which provides a framework for treating Schr\"odinger operators with physically relevant singularities. A well-known subclass of $K_N$ consists of the Rellich–Kato potentials, given by
$$\leqno{(K\!R)}\qquad \qquad \left\{\begin{gathered} V=V_\infty+V_0, \ \text{where} \ V_\infty \in L^\infty(\R^N) \ \text{and} \ V_0\in L^p(\R^N) \ \text{with $p$ satisfying}\\ p\geq 2 \ \text{ if } \ 1\leq N\leq 3 \ \ \text{ and } \ \ p>N/2 \ \text{ if } \ N\geq 4. \end{gathered}\right.$$
The class $(K\!R)$ includes, in particular, Coulomb-type potentials $V(x) = a|x|^{-1}$, for $a \in \mathbb{R}$, which model the electron--nucleus interaction in the hydrogen atom. A natural generalization is provided by the potential
\begin{align}\label{v-atomic}
V(x) = - \sum_{i=1}^{n} \frac{Z}{|x_i|} + \sum_{1\le i < j \le {n}} \frac{1}{|x_i - x_j|}, \quad x = (x_1, \ldots, x_{n}) \in (\mathbb{R}^{3})^n, 
\end{align}
describing $n$ electrons ($n\ge 2$) interacting with a nucleus charge $Z > 0$ fixed at the origin, together with mutual Coulomb repulsion between electrons. Such potentials arise naturally in the mathematical description of many-particle quantum systems and constitute one of the principal examples of Kato-class interactions that are not covered by the Rellich--Kato assumptions (see \cite{Aiz-Sim}, \cite{MR0670130}). For the atomic potentials \eqref{v-atomic}, it was shown in \cite{zhi} that there exists $\nu \leq 0$ such that $\sigma_{\mathrm{ess}}(-\Delta + V) = [\nu, +\infty)$. Moreover, if $Z>n-1$ then the discrete spectrum is non-empty and consists of an infinite sequence of eigenvalues $(\lambda_k)_{k \geq 1}$ accumulating at $\nu$ from below. Further details on spectral properties of atomic Schrödinger operators can be found in \cite{evans} and \cite{MR1768629}.

In our study of equation $(E)_\lambda$, we further assume that the function $f$ satisfies the conditions.\\[2pt]
\noindent\makebox[9mm][r]{$(f1)$} \parbox[t][][t]{155mm}{For every $u \in \R$, the mapping $x \mapsto f(x,u)$ is Lebesgue measurable, and the function $c := f(\,\cdot\,,0)$ belongs to $L^{2}(\R^{N})$.}\\[2pt]
\noindent\makebox[9mm][r]{$(f2)$} \parbox[t][][t]{155mm}{There exists  $l\in L^\infty(\R^N)$ such that 
\begin{equation*}
|f(x,u)-f(x,v)|\leq l(x)|u-v|\quad \text{for a.e. } x\in\R^N \text{ and all } u,v\in\R.
\end{equation*}}\\[2pt]
\noindent\makebox[9mm][r]{$(f3)$} \parbox[t][][t]{155mm}{The Lipschitz bound $l$ satisfies
\begin{equation*}
\hat \varrho (l) < \inf \sigma_{\mathrm{ess}}(-\Delta + V) - \lambda, 
\end{equation*}
where 
\begin{equation*}
\hat \varrho (l) := \lim_{R\to \infty} \esssup_{|x|\ge R} l (x).
\end{equation*}}\\	
To study the existence of solutions to $(E)_\lambda$, we consider the following time-dependent equation 
$$
u_t = \Delta u - V(x)u +\lambda u + f(x,u),\quad  x\in\R^N,\ t>0. \leqno{(P)_\lambda}
$$ 

\noindent The Schr\"{o}dinger operator  $-\Delta+V$ is self-adjoint in $L^2(\R^N)$ and bounded from below. 
Consequently, if $\inf\sigma_{\mathrm{ess}}(-\Delta + V)>\lambda$, then the spectrum of $-\Delta+V$ below $\lambda$ consists of at most finitely many eigenvalues.
This allows us to define the total multiplicity of eigenvalues less than $\lambda$ by
\begin{equation*}
d^{-}(V,\lambda) := \sum_{\mu \in \sigma(-\Delta + V) \cap (-\infty, \lambda)} \dim \ker(-\Delta + V - \mu).
\end{equation*}
The main result in the nonresonant case is the following criterion.
\begin{theorem}\label{14102025-1145}
Let $f$ satisfy the conditions $(f1)$\,--\,$(f3)$ with $f(x,0)=0$ for a.e.\
$x\in\R^N$, and let $\alpha, \omega\in L^{\infty}(\R^{N})$ be such that
$$\lim_{u\to 0} \frac{f(x,u)}{u} = \alpha(x) \quad \text{for a.e. }
x\in\R^N, \leqno{(L_0)}$$
and
$$\lim_{|u|\to \infty} \frac{f(x,u)}{u} = \omega(x) \quad \text{for a.e. }
x\in\R^N. \leqno{(L_\infty)}$$
Assume that $\lambda$ belongs to the resolvent set of both $-\Delta+V-\alpha$
and $-\Delta+V-\omega$, and that
\begin{equation}\label{cond-alpha}
\max\{\hat\varrho(|\alpha|), \hat\varrho(|\omega|)\} < \inf \sigma_{\mathrm{ess}}
(-\Delta+V)-\lambda \quad\text{and}\quad d^{-}(V-\alpha, \lambda)\neq
d^{-}(V-\omega, \lambda).
\end{equation}
Then there exist a nonzero solution $\bar u \in H^2(\R^N)$ of $(E)_\lambda$
and a bounded solution $u : \R \to H^1(\R^N)$ of $(P)_\lambda$ such that
either $\bar u$ belongs to the $\alpha$-limit set of $u$ and $u(t) \to 0$ in
$H^1(\R^N)$ as $t \to +\infty$, or $\bar u$ belongs to the $\omega$-limit set
of $u$ and $u(t) \to 0$ in $H^1(\R^N)$ as $t \to -\infty$.
\end{theorem}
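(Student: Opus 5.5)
We plan to derive the theorem from the Conley index, comparing the homotopy index of the full bounded invariant set $K_\infty$ of the semiflow generated by $(P)_\lambda$ in $H^1(\R^N)$ with the index of the isolated stationary point $0$. If $K_\infty=\{0\}$, the two indices must coincide. We will show they differ, so $K_\infty$ contains a nonzero element. Its structure then yields the required connecting orbit.

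\textbf{Step 1: the semiflow and its invariant sets.} Write $A=-\Delta+V-\lambda$, a self-adjoint operator bounded from below, and $F(u)=f(\cdot,u)$. By $(f1)$--$(f2)$ the map $F$ is Lipschitz on $L^2$, so $(P)_\lambda$ generates a semiflow $\Phi$ on $H^1(\R^N)$ (the form domain). We will use the a~priori estimates announced in the abstract. The $L^\infty$ bound follows from the Feynman--Kac representation, and the $L^2$ bound from $(f3)$: spectrally split $L^2=X_-\oplus X_+$ with $X_-$ finite dimensional, and use that the tail Lipschitz constant $\hat\varrho(l)$ lies below the essential spectral gap. Together with Sobolev regularity, these estimates show that the union $K_\infty$ of all bounded full orbits is bounded in $H^2$ and compact in $H^1$. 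The compactness step needs admissibility ("tail estimates"), and this is precisely where $(f3)$ enters: outside a large ball the nonlinearity is a perturbation of size below the gap.

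\textbf{Step 2: index at infinity and at zero.} We homotope $(P)_\lambda$ along $u_t=-Au+(1-s)F(u)+s\,\omega u$, $s\in[0,1]$. Using $(L_\infty)$, a blow-up/rescaling argument shows that for large $R$ all bounded orbits of the family stay in the ball of radius $R$. Indeed, a sequence of orbits with norm tending to infinity, after normalization, converges to a nonzero bounded solution of the linear problem $u_t=-(A-\omega)u$. This is impossible, since $\lambda\in\rho(-\Delta+V-\omega)$ makes that linear problem hyperbolic. The pointwise convergence $f(x,u)/u\to\omega$ is upgraded to $L^2$ convergence of $F(R_nw_n)/R_n$ via dominated convergence, using the uniform $L^\infty$/$H^1$ compactness from Step 1 and $\hat\varrho(|\omega|)$ below the gap. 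Continuation then gives $h(K_\infty)=\Sigma^{d^-(V-\omega,\lambda)}$. Analogously, linearizing at $0$ with $(L_0)$ and homotoping to $u_t=-(A-\alpha)u$ on a small ball shows that $0$ is isolated with $h(\{0\})=\Sigma^{d^-(V-\alpha,\lambda)}$. The small-ball normalization uses the same rescaling argument with $R_n\to0$.

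\textbf{Step 3: conclusion.} Since the two sphere indices differ by \eqref{cond-alpha}, we have $K_\infty\neq\{0\}$. Take a Morse decomposition of $K_\infty$ into $\{0\}$ and the remaining invariant set. Nonzero equilibria of $(E)_\lambda$ exist in $K_\infty$ by a gradient structure: $(P)_\lambda$ is the gradient flow of the energy $\tfrac12\<Au,u\>-\int\int_0^u f$, so every bounded orbit has $\alpha$- and $\omega$-limit sets consisting of equilibria. Hence there is a nonconstant full orbit $u$ in $K_\infty$ with one end at $0$. Indeed, if no orbit connected to $0$, then $\{0\}$ and $K_\infty\setminus\{0\}$ would form an attractor--repeller pair with trivial connections. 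The index would then be the wedge sum, and this contradicts $\Sigma^{k}\neq\Sigma^{m}$ once we check that the remaining set's index cannot compensate. Along this orbit the energy is strictly monotone, so the opposite limit set contains a nonzero equilibrium $\bar u$. By Step 1, $\bar u\in H^2$ and the convergence holds in $H^1$.

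\textbf{Main obstacle.} The hardest part is Step 1 together with the rescaling in Step 2. Establishing compactness (admissibility) of bounded sets of orbits on the unbounded domain $\R^N$ with Kato-class $V$ requires the Feynman--Kac $L^\infty$ smoothing and tail estimates controlled by $\hat\varrho$. We would first prove uniform smallness of $\|u(t)\|_{L^2(|x|>R)}$ via a cutoff energy estimate exploiting $\hat\varrho(l)<\inf\sigma_{\mathrm{ess}}-\lambda$.
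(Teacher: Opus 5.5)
Your architecture is the paper's. It uses the linear homotopies $(1-s)f+s\alpha u$ and $(1-s)f+s\omega u$, and rescalings with $\rho_n\to0$ and $\rho_n\to\infty$ whose limits are nonzero bounded full solutions of a hyperbolic linear problem; you rule these out by hyperbolicity, the paper by the Lyapunov functional, and either works. It then uses Theorem~\ref{linear-conley-index}, a connecting-orbit argument, and the energy to locate $\bar u$. Your Step~3 re-derives Theorem~\ref{rybakowski-irreducible}. Citing it directly spares you showing that $K_\infty\setminus\{0\}$ is closed when no connection exists, and comparing the reduced homology of $\Sigma^{k_\infty}$ and $\Sigma^{k_0}\vee h(K')$.

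The genuine difference is admissibility. The paper argues spectrally. $S_{A-\lambda}(t)$ contracts by $e^{-\gamma t}$ on $X_+^e$, whose complement is finite-dimensional (Lemma~\ref{lem-con-sem}). The Nemytskii operators of the whole homotopy family are $\hat\varrho$-condensing (Proposition~\ref{prop-cond-f-n}). Gronwall then gives exponential decay of $\beta_{L^2}$ along orbits (Proposition~\ref{prop-con-poinc}).

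Your tail estimates can replace this, but you leave implicit how to convert the global quantity $\inf\sigma_{\mathrm{ess}}(A)$ into local coercivity. This matters because $V$ need not be bounded below near infinity (e.g.\ \eqref{v-atomic}). You need $q_V(\phi,\phi)\ge(\inf\sigma_{\mathrm{ess}}(A)-\varepsilon)\|\phi\|_{L^2}^2$ for all $\phi\in H^1$ vanishing on $B(0,R)$, with $R$ large. This is the easy half of Persson's theorem: otherwise disjointly supported test functions span an infinite-dimensional subspace on which $q_V<(\inf\sigma_{\mathrm{ess}}(A)-\varepsilon)\|\cdot\|_{L^2}^2$, contradicting min--max. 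Combine it with the IMS identity $q_V(u,\xi^2u)=q_V(\xi u,\xi u)-\|u\nabla\xi\|_{L^2}^2$. For a cutoff $\xi_R$ vanishing on $B(0,R)$ this gives $\frac{d}{dt}\|\xi_Ru\|_{L^2}^2\le-2\delta\|\xi_Ru\|_{L^2}^2+O(R^{-2})$ along $H^1$-bounded orbits, uniformly over the homotopies. Here $\delta>0$ comes from $\max\{\hat\varrho(l),\hat\varrho(|\alpha|),\hat\varrho(|\omega|)\}<\inf\sigma_{\mathrm{ess}}(A)-\lambda$. Your route yields explicit spatial decay of orbits. The paper's route needs no localization of the form and gets uniformity in $n$ and $s$ for free.

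Step~1, however, should be trimmed, because it claims bounds that are unavailable here.
\begin{itemize}
\item The Feynman--Kac $L^\infty$ bounds of Section~\ref{sec-3} require $\|F(u)\|_{L^\infty}\le M$ (condition \eqref{F-L-infty-bdd}). This fails here, since $f(x,u)\approx\omega(x)u$ for large $|u|$. The paper uses those bounds only for the resonant theorem.
\item Condition $(f3)$ alone does not bound $K_\infty$. A linear $f=\mu(x)u$ with $\hat\varrho(|\mu|)$ small and $\lambda$ an eigenvalue of $-\Delta+V-\mu$ satisfies $(f3)$ yet has an unbounded line of equilibria. Boundedness of $K_\infty$ is exactly what your Step~2 rescaling proves, using $(L_\infty)$ and $\lambda$ lying in the resolvent set of $-\Delta+V-\omega$.
\end{itemize}
So Step~1 should supply only the semiflow and admissibility. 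In Step~2, drop the $L^\infty$ ingredient: $\rho_n^{-1}F(\rho_nw_n)\to\omega w$ in $L^2$ follows from $w_n\to w$ in $L^2$, the common Lipschitz bound $\|l\|_{L^\infty}$, and dominated convergence with majorant $(l+|\omega|)|w|$. Likewise, the claimed $H^2$ bound does not follow from ``Sobolev regularity'' for a general Kato-class $V$, for which $D(A)$ need not equal $H^2(\R^N)$. The semiflow itself only places equilibria in $D(A)$.
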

Now let us turn to the resonant case, where we assume that $\lambda$ is an
eigenvalue of $-\Delta+V$ and that there exists $m \in L^{\infty}(\R^{N})$
such that
\begin{equation}\label{bound-f}
|f(x,u)| \leq m(x) \quad \text{for a.e. } x \in \mathbb{R}^N \text{ and for all } u \in \mathbb{R}.
\end{equation}
In this setting, we impose additional conditions on the behavior of $f$ at
infinity, namely the \emph{Landesman--Lazer} type conditions, which require
that either
$$
\int_{\{\varphi >0\}} \check f_+(x) \varphi (x) \,d x  + \int_{\{\varphi <0\}} \hat f_-(x) \varphi (x) \,d x > 0 \
\ \mbox{ for }\  \varphi \in\ker(-\Delta+V-\lambda)\setminus\{0\}, \leqno{(LL)_+}
$$
or
$$
\int_{\{\varphi>0\}} \hat f_+(x) \varphi (x) \,d x + \int_{\{\varphi <0\}} \check f_-(x) \varphi (x) \,d x < 0
\ \ \mbox{ for }\  \varphi \in\ker(-\Delta+V-\lambda)\setminus\{0\}, \leqno{(LL)_-}
$$

\noindent where $\hat{f}_\pm(x)= \limsup_{s \to \pm\infty} f(x,s)$ and  $\check{f}_\pm (x)= \liminf_{s \to \pm\infty} f(x,s)$ for $x\in\R^N$. 
In the special case where $V$ is of Rellich--Kato type, the unique
continuation principle holds for the Schr\"odinger operator (see
\cite[Th.\,1.1]{MR882069} and \cite[Prop.\,3]{Gossez}). As a consequence,
conditions $(LL)_+$ and $(LL)_-$ are implied by the following weaker
conditions
$$
\left\{
\begin{array}{c}
	\check{f}_+(x) \geq 0 \ \mbox{ and } \ \hat{f}_-(x)\leq 0 \ \mbox{ for a.e. } \ x\in\R^N, \\
		\text{there exists a set of positive measure on which both} \ \check{f}_+ > 0 \ \text{and} \ \hat{f}_- < 0,
	\end{array}
	\right.
\leqno{\widetilde{(LL)}_+}
$$
or
$$
\left\{
\begin{array}{c}
\hat{f}_+(x)\leq 0 \ \mbox{ and } \ \check{f}_- (x)\geq 0 \ \mbox{ for a.e. } \ x\in\R^N,\\
\text{there exists a set of positive measure on which both} \ \check{f}_+ < 0 \ \text{and} \ \hat{f}_- > 0.
\end{array} \right. \leqno{\widetilde{(LL)}_-}
$$

\noindent The main result in the resonant case is the following theorem.
\begin{theorem}\label{30042019-1204}
Let $f$ satisfy the conditions $(f1)$\,--\,$(f3)$ and \eqref{bound-f} with $f(x,0)=0$ for a.e.\ $x\in\R^N$, and suppose there exists $\alpha \in L^\infty(\R^N)$ satisfying $(L_0)$. Assume that $\lambda$ is an eigenvalue
of $-\Delta+V$ such that
$$\hat \varrho (|\alpha|) < \inf \sigma_{ess} (-\Delta+V) - \lambda$$ and that one of the following conditions is satisfied: \\[3pt]
\noindent\makebox[7mm][r]{$(i)$} \parbox[t][][t]{155mm}{$(LL)_+$ holds and $d^-(V,\lambda) +\dim \ker (-\Delta+V-\lambda) \neq d^{-} (V-\alpha,\lambda)$,}\\[3pt]
\noindent\makebox[7mm][r]{$(ii)$} \parbox[t][][t]{155mm}{$(LL)_-$ holds and $d^-(V,\lambda) \neq d^{-} (V-\alpha,\lambda)$.}\\[3pt]
Then there exist a nonzero solution $\bar u\in H^2(\R^N)$ of $(E)_\lambda$ and a bounded solution $u:\R\to H^1(\R^N)$ of $(P)_\lambda$ such that either $\bar u$ belongs to the $\alpha$-limit set of $u$ and $u(t)\to 0$ in $H^1(\R^N)$ as $t\to -\infty$, or $\bar u$ belongs to the $\omega$-limit set of $u$ and $u(t)\to 0$ in $H^1(\R^N)$ as $t\to +\infty$. 
\end{theorem}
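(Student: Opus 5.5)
We follow the Conley index strategy described in the abstract. Write $A:=-\Delta+V-\lambda$, a self-adjoint operator in $L^2(\R^N)$ that is bounded below, and let $F(u):=f(\cdot,u)$ be the Nemytskii operator. By $(f1)$--$(f2)$, $F$ is globally Lipschitz on $L^2(\R^N)$. Hence $(P)_\lambda$, i.e.\ $\dot u=-Au+F(u)$, generates a semiflow $\Phi$ on $H^1(\R^N)=D(|A|^{1/2})$ via the variation of constants formula.

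\emph{Step 1: compactness.} Using $(f3)$ and the Feynman--Kac bounds for $e^{-tA}$, we show that $\Phi$ is admissible, i.e.\ asymptotically compact on bounded sets. The idea is to split $F$ into two parts. The first is a tail part with Lipschitz constant below $\inf\sigma_{\mathrm{ess}}(A)$; it is controlled by a spectral-gap argument. The second is a part localized to $|x|\le R$, which is compact because of the smoothing of $e^{-tA}$.

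\emph{Step 2: isolate and compute the index at infinity.} Take $K$ to be the maximal bounded invariant set of $\Phi$. To see that $K$ is bounded, argue by contradiction and rescale: suppose $u_n$ are bounded full solutions with $\sup_t\|u_n(t)\|_{H^1}\to\infty$, and set $w_n=u_n/\|u_n\|_\infty$. By \eqref{bound-f}, $F(u_n)/\|u_n\|\to0$, so $w_n$ converges to a bounded full solution of $\dot w=-Aw$. This limit lies in $\ker A$ and is nonzero. Now return to the equation for $u_n$, project onto $\ker A$, and test against $\varphi=\lim w_n$. Fatou's lemma together with $(LL)_\pm$ then gives a sign contradiction, exactly as in the classical Landesman--Lazer argument. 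The $L^\infty$ and $L^2$ a priori bounds from the Kato-class heat kernel estimates are used to pass to limits.

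With $K$ bounded, we compute its index by a homotopy. Consider $\dot u=-Au\mp\varepsilon u+sF(u)$ and then deform $F$ away. In case $(LL)_+$ the resulting index is $h(K)=\Sigma^{d^-(V,\lambda)+\dim\ker A}$; in case $(LL)_-$ it is $\Sigma^{d^-(V,\lambda)}$. Keeping $K$ isolated along the homotopy again uses the Landesman--Lazer inequalities with the sign of $\varepsilon$ chosen appropriately.

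\emph{Step 3: the index at zero.} By $(L_0)$ and dominated convergence, the derivative of $F$ at $0$ is multiplication by $\alpha$, with $\lambda\notin\sigma(-\Delta+V-\alpha)$. The hyperbolicity condition $\hat\varrho(|\alpha|)<\inf\sigma_{\mathrm{ess}}(A)$ ensures that the linearization has only finitely many unstable directions. Linearized stability then gives that $\{0\}$ is an isolated invariant set with $h(\{0\})=\Sigma^{d^-(V-\alpha,\lambda)}$.

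\emph{Conclusion.} By hypotheses (i) and (ii), the indices of $K$ and $\{0\}$ differ, so $K\neq\{0\}$. Choose the maximal invariant set $K'\subset K\setminus\{0\}$ in the attractor--repeller sense. A standard argument (an attractor--repeller decomposition together with a Morse-type equation) then yields a full bounded orbit in $K$ connecting $0$ with $K\setminus\{0\}$. Since $F$ is an $L^2$-gradient of $\int\!\int_0^u f$, the flow is gradient-like with the energy functional as a Lyapunov function. Therefore the $\alpha$- and $\omega$-limit sets consist of equilibria, which gives a nonzero equilibrium $\bar u$. Elliptic regularity applied to $A\bar u=F(\bar u)\in L^2$ gives $\bar u\in H^2$.

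The main obstacle is Step 2. Passing from the rescaled solutions to the limit requires compactness on all of $\R^N$, which must come from the spectral gap below the essential spectrum, and exploiting the weak conditions $(LL)_\pm$ with merely $L^\infty$ bounds requires the Kato-class pointwise estimates.
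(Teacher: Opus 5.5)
\textbf{Verdict: there is a genuine gap in Step~2.} Your overall architecture (compactness, index of $K_\infty$, index of $\{0\}$, connecting orbit, Lyapunov function) is sound. However, the decisive ``sign contradiction'' does not work as written.

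For full solutions there is no identity $\langle F(u_n),\varphi\rangle_{L^2}=0$ as in the elliptic Landesman--Lazer argument. Projecting onto $X_0=\ker(-\Delta+V-\lambda)$ only gives $\frac{d}{dt}\langle u_n(t),\varphi\rangle_{L^2}=\langle F(u_n(t)),\varphi\rangle_{L^2}$. Your blow-up convergence $u_n/\rho_n\to\varphi$ holds only uniformly on compact time intervals. So Fatou and $(LL)_+$ merely show that $\langle u_n(t),\varphi\rangle_{L^2}$ increases by an amount of order $T$ on $[-T,T]$ for $n\ge n(T)$. But this quantity is only known to stay in an interval of length of order $\rho_n\to\infty$, so nothing is contradicted. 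Moreover, over long times $Pu_n(t)$ need not stay near the fixed direction $\varphi$. What is missing is control valid for \emph{all} $t\in\R$.

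\textbf{How the paper gets global-in-time control.}
\begin{itemize}
\item Propositions~\ref{prop-est-infty-1} and~\ref{prop-est-infty-2} bound $Qu(t)$ in $L^\infty$ uniformly in $t$ and independently of $Pu$. They use the Feynman--Kac bounds, the $L^1$ decay of $Q_+S_{A-\lambda}(t)$ from Proposition~\ref{prop-est-l1}, and duality.
\item Theorem~\ref{lem-est2} then gives $\pm\langle F(v+w),v\rangle_{L^2}>\nu$ for $\|v\|_{L^2}\ge R_0$ and $\|w\|_{L^\infty}\le R_Q$.
\item A differential inequality for $\|Pu(t)\|_{L^2}^2$ on a forward (resp.\ backward) half-line yields $\|Pu(t)\|_{L^2}<R_0$.
\item Proposition~\ref{Q-boundedness} converts this into an $H^1$ bound on $Qu$, uniformly along the homotopy.
\end{itemize}
These $L^\infty$ estimates are not there to ``pass to limits''; they supply exactly the global-in-time control your sketch lacks.

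\textbf{How your rescaling route could be repaired without them.} Two further ideas are needed.
\begin{itemize}
\item Compactness of the rescaled family under \emph{arbitrary} time shifts (Corollary~\ref{cor-admissibility} with Proposition~\ref{prop-cond-f-n}) gives $\sup_{t\in\R}\|Qu_n(t)\|_{H^1}=o(\rho_n)$.
\item A second contradiction argument with Fatou gives $\pm\langle F(u_n(t)),Pu_n(t)\rangle_{L^2}\ge\nu\rho_n$ whenever $\|Pu_n(t)\|_{L^2}\ge\delta\rho_n$. Here you must test against $Pu_n(t)$ rather than a fixed $\varphi$, and use $X_0\subset L^1$.
\end{itemize}
Monotonicity of $\|Pu_n(t)\|^2_{L^2}$ on a half-line then contradicts boundedness.

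\textbf{The justification for the blow-up limit is also wrong as stated.} ``By \eqref{bound-f}, $F(u_n)/\|u_n\|\to0$'' does not follow: since $m$ is only in $L^\infty$, \eqref{bound-f} gives no $L^2$ bound on $F$, which is precisely the difficulty the paper stresses. What you can use is that $\rho_n^{-1}F(\rho_n\,\cdot\,)\to0$ pointwise on $H^1(\R^N)$, together with the uniform condensing bound $\hat\varrho(l)$.

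\textbf{The same argument is needed along your $\varepsilon$-homotopy.} Boundedness of $K$ alone does not compute the index. The corrected argument must hold uniformly in $\varepsilon\in[0,\varepsilon_0]$. The $\varepsilon$-term must also have the same sign as the Landesman--Lazer term in $\frac{d}{dt}\|Pu\|^2_{L^2}$, i.e.\ $+\varepsilon u$ under $(LL)_+$.

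\textbf{Step 3 (the index at zero).} $(L_0)$ with dominated convergence only gives $t^{-1}F(tv)\to\alpha v$ in $L^2$ for each fixed $v$. For $x$-dependent $f$ on $\R^N$, the Nemytskii operator is in general not Fr\'echet differentiable at $0$ as a map $H^1\to L^2$, so a linearization principle is not available. The paper instead deforms $f$ to $\alpha u$ via $(1-s)f+s\alpha u$. It then shows, by rescaling with $\rho_n\to0$, compactness and nondegeneracy at $0$, that a small ball isolates $\{0\}$ for every $s$ (Theorem~\ref{non-res-index-formulae}(i)).

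\textbf{$H^2$ regularity.} For a general Kato-class $V$, $A\bar u\in L^2$ only gives $\bar u\in D(A)$, which need not lie in $H^2(\R^N)$; that inclusion needs, e.g., Rellich--Kato type assumptions. So ``elliptic regularity'' does not deliver $H^2$ here. The paper's proof likewise only produces a stationary point of the semiflow.

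\textbf{What does work as an alternative.} Once the uniform bounds are in place, your $\varepsilon$-shift computation of the index is a legitimate alternative to the paper's decoupling homotopy $G(u,s)=PF(Pu+sQu)+sQF(Pu+sQu)$. At $s=0$ the paper's homotopy reduces to the product of $\dot v=PF(v)$ on $X_0$ with a linear flow. Likewise, your connection step is essentially the irreducibility criterion of Theorem~\ref{rybakowski-irreducible}, which the paper uses.
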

Both criteria stated in Theorems~\ref{14102025-1145} and~\ref{30042019-1204} generalize the corresponding results of \cite{Cw-Kok}, where $V$ was
assumed to be the Kato--Rellich class potential. The present framework allows for potentials from the broader Kato class and, in particular, includes many-particle Coulomb interactions \eqref{v-atomic} and it also opens for a significant broad class of nonlinearities. Moreover, here we do not impose any geometric conditions on $V$; instead, they are replaced by condition $(f3)$, which involves the asymptotic relationship of the Lipschitz constant $l$ and the essential spectrum of $-\Delta+V$. Observe that in the non-resonant case in Theorem \ref{14102025-1145}, we assume that $f$ possesses an asymptotic potential $\omega$, whereas in the resonant case the nonlinearity is bounded by a function $m$, with both $\omega$ and $m$ belonging to
$L^\infty(\R^{N})$. The assumption that $m$ is merely bounded is one of the main sources of difficulties in our analysis but extends the scope of our considerations, since these assumptions are particularly relevant for the
nonlinear Schr\"odinger equation~\eqref{NLS}, in which the dynamics are
governed not only by the external potential $V$ but also by 
\emph{saturable nonlinearities}, which satisfy
\[
g(x,u) \to g_\infty^{\pm}(x) \quad \text{as } u \to \pm\infty,
\]
uniformly in $x$ on bounded subsets of $\R^N$; see, e.g., \cite{Karja,
Sulem}. Such nonlinearities arise naturally in various physical models, where the response of the medium remains bounded for large amplitudes of the wave function. For instance, the exponential nonlinearity $g(u) = 1 - e^{-u}$ appears in the context of turbulent plasma wave dispersion (see \cite{LaedkeSpatschek1984}, \cite{lash-cher}), while the nonlinearity $g(u) = -au(1+u)^{-1}$, with $a\neq 0$, models light packets in semiconductor-doped glass (see \cite{gatz-herman}, \cite{liao-zhang}). Nonlinearities of the form $g(u)=b(1+u)^{-1}$, with $b \neq 0$, appear in the analysis of wave propagation in photorefractive materials (see \cite{Gabriel2009}, \cite{MR2451606}). We also refer to \cite{MR3427792} for variational results on the existence of standing waves for the Schr\"odinger equation with general saturable nonlinearities. In all of these examples, the nonlinear term naturally fits into the $L^\infty$ framework considered in this paper. 


Our methods are based on the Conley index, in the version developed by Rybakowski (see \cite{rybakowski, rybakowski-TAMS}), applied to the parabolic semiflow $(P)_\lambda$. 
In contrast to previous works (see, e.g., \cite{Cw-Kok}, \cite{Cw-Kr-2019}, \cite{MR3072663}, \cite{MR4293056}, \cite{MR1992823}, \cite{MR2387828}), where the assumption $m \in L^2(\mathbb{R}^N)$ was imposed, the present setting with $m \in L^\infty(\mathbb{R}^N)$ is substantially more demanding from an analytical point of view. Indeed, on the unbounded domain $\mathbb{R}^N$, the condition \eqref{bound-f} does not provide any $L^2$ control over $H^1$-bounded full solutions of $(P)_\lambda$. Consequently, estimates in the standard energy space $H^1(\mathbb{R}^N)$ cannot be directly exploited when constructing an admissible isolating neighborhood for the maximal bounded invariant set in the proof of Theorem \ref{30042019-1204}. To overcome these difficulties, we develop an alternative approach and establish \emph{a priori} bounds for $H^1$-bounded solutions in the $L^\infty$ norm by combining estimates derived from the Feynman--Kac representation of the Schr\"odinger semigroup generated by $-\Delta + V$ with a duality argument. This allows us to compensate for the lack of global $L^2$ control in the construction of an admissible isolating neighborhood in $H^{1}(\R^{N})$.

The paper is organized as follows. In Section~2, we discuss spectral properties of the Schr\"odinger operator in $L^2(\R^N)$ and recall some estimates for its associated semigroup. Section~3 contains $L^1$ estimates for the semigroup generated by the Schr\"odinger operator. In Section~4, we
derive $L^\infty$ estimates for solutions of $(P)_\lambda$ that are bounded in
$H^1(\R^N)$, while Section~5 is devoted to the corresponding $L^2$ and
$H^1$ estimates. Section~6 addresses the compactness properties of semiflows
generated by perturbations of the Schr\"odinger operator by continuous
nonlinearities satisfying a uniform condensing condition. In Section~7, we
provide proofs of the main results. The Appendix contains some basic information about sectorial operators in Banach spaces and the Conley index in the sense of Rybakowski for infinite-dimensional semiflows.

\section{Properties of the Schr\"odinger operator} \label{sec-2}
Let $A_0$ be the operator on $X := L^2(\mathbb{R}^N)$ defined by
\begin{align*}
D(A_0) := H^2(\mathbb{R}^N) \quad \text{and} \quad A_0 u := -\Delta u \quad \text{for } u \in D(A_0),
\end{align*}
where $\Delta = \sum_{k=1}^{N} \frac{\partial^2}{\partial x_k^2}$ denotes the Laplacian, with derivatives understood in the weak sense. It is well known that $A_{0}$ is a self-adjoint sectorial operator with spectrum $\sigma(A_0) = [0,+\infty)$. Moreover, if $V\in K_N$, then
		\begin{equation}\label{crucial-property-of-Kato}
			\||V|^{1/2} (A_0+\lambda )^{-1/2}\|_{\mathcal{L}(L^2, L^2)}\to 0 \ \ \mbox { as  } \ \ \lambda\to+\infty,
		\end{equation}
where the fractional resolvent is defined by $$(A_{0}+\lambda)^{-1/2} := \frac{1}{\Gamma(1/2)} \int_0^\infty t^{-1/2} e^{-\lambda t}S_{A_{0}}(t) \, dt$$
where $\{ S_{A_0}(t):X\to X\}_{t\geq 0}$ is the $C_0$-semigroup generated by the operator $-A_0$ 
(see \cite[Prop. A.2.3]{MR0670130} and the remarks preceding \cite[Th. A.2.7]{MR0670130}). 
        In particular, \eqref{crucial-property-of-Kato} implies that for every $\ve>0$ there exists a constant $C_\varepsilon>0$ such that
		\begin{equation}\label{A0-bouneded}
			\int_{\R^N} |V(x)| |u(x)|^2 \dx  \le \varepsilon \| \nabla u\|_{L^2}^2  + C_\varepsilon \|u\|_{L^2}^{2}, \quad  u\in H^1(\R^N).
		\end{equation}
		This inequality allows us to define the symmetric bilinear form 
		$$
		q_V(u,v) := \int_{\R^N} \nabla u(x)\nabla v(x) \, dx  + \int_{\R^N} V(x) u(x)v(x) \, dx,  \quad u,v\in H^1(\R^N). 
		$$
		The form is semibounded from below: choosing $\varepsilon=1/2$ in the inequality \eqref{A0-bouneded}, we obtain a constant $C_V>0$ such that
		\begin{equation}\label{bounded-from-below}
			q_V(u,u) \geq \frac{1}{2} \|\nabla u\|_{L^2}^2 - C_V \|u\|_{L^2}^2 \geq - C_V \|u\|_{L^2}^2, \quad u\in H^1(\R^N).
		\end{equation}
		Moreover, we also get the following estimate
		\begin{equation}\label{bounded-from-above}
			q_V(u,u) \leq \frac{3}{2} \|\nabla u\|_{L^2}^2 + C_V\|u\|_{L^2}^{2}, \quad u\in H^1(\R^N).
		\end{equation}
		The form $q_V$ is closed: the space $H^1(\R^N)$, equipped with the norm 
		$$
		\|u\|_{V}^2 := q_V(u,u) + (C_V+1)\|u\|_{L^2}^2, \quad u\in H^1(\R^N)
		$$
		is complete, since by \eqref{bounded-from-below} and \eqref{bounded-from-above} the norm $\|\cdot\|_V$ is equivalent to the standard norm of $H^1(\R^N)$. Hence, by \cite[Th. VIII.15]{MR0493419}, the form $q_V$ determines a unique self-adjoint operator $A:D(A)\subset L^2(\R^N)\to L^2(\R^N)$ such that $D(A)\subset H^{1}(\R^N)$ is a dense subspace and
		$$
		q_V(u,v) = (Au, v)_{L^2}, \quad  u\in D(A), \ v\in H^{1}(\R^{N}).
		$$
		Since $A=-\Delta+V$ is self-adjoint and bounded from below by \eqref{bounded-from-below}, it is sectorial. 
        Setting $\delta := 1 + C_{V}$ ensures that $A + \delta$ is strictly positive, which allows us to define the fractional power $(A + \delta)^{1/2}$. We then consider the fractional power space $X^{1/2} := D((A + \delta)^{1/2})$ equipped with the graph norm $$\|u\|_{1/2}:=\|(A + \delta)^{1/2}u\|_{L^{2}}, \quad u\in X^{1/2},$$
		where 
		$$(A + \delta)^{-1/2} := \frac{1}{\Gamma(1/2)} \int_0^\infty t^{-1/2} e^{-(1 + C_{V})t}S_{A}(t) \, dt.$$
		Then $X^{1/2}$ is a Banach space with $D(A)$ as a dense subspace. Since $(A+\delta)^{1/2}$ is symmetric,
		\begin{equation}\label{eq-1122}
			\|(A + \delta)^{1/2}u\|^{2}_{L^{2}} =  q_V(u,u) + (1+C_{V})\|u\|_{L^2}^{2}, \quad u\in D(A),
		\end{equation}
		which, together with \eqref{bounded-from-below}, \eqref{bounded-from-above}, and the density of $D(A)$ in both $H^1(\R^N)$ and $X^{1/2}$, implies $X^{1/2} = H^1(\R^N)$ and the equivalence of the corresponding norms.
		
		Since $A$ is sectorial, $-A$ generates an analytic $C_0$-semigroup $\{S_A (t)\}_{t\geq 0}$ of bounded linear operators on $X=L^2(\R^N)$. This semigroup admits a probabilistic representation via the Feynman--Kac formula
		\begin{align*}
			[S_A(t) u](x) = E_{x}\!\left[\exp\left(-\int_{0}^{t}V(B_s)\,ds\right)u(B_t)\right]\!,\quad u\in C^{\infty}_{0}(\R^{N}),
		\end{align*}
		where $(B_t)_{t\geq 0}$ is the standard Brownian motion starting at the origin; see \cite{MR0493420}, \cite{MR0670130}, \cite{MR1717054}. The condition $V \in K_N$ guarantees that the exponential factor is integrable and exhibits an appropriate behavior, as $t \to 0^{+}$. This representation serves as a powerful tool for deriving properties of the semigroup, including the following estimates
		\begin{align}\label{est-1}
			& \|S_{A}(t)u\|_{L^{2}} \le  e^{\omega t}\|u\|_{L^{2}},  && \hspace{-70pt}\ u\in L^{2}(\R^{N}), \ t\geq 0,\\ \label{est-2}
			&   \|S_{A}(t)u\|_{L^{\infty}}  \le C e^{\omega t}\|u\|_{L^{\infty}},  &&\hspace{-70pt} \ u\in L^{\infty}(\R^{N}),\ t\geq 0,\\ \label{est-3}
			&  \|S_{A}(t)u\|_{L^{\infty}}  \le C t^{-N/4}e^{ \omega t}\|u\|_{L^{2}}, &&  \hspace{-70pt} \ u\in L^{2}(\R^{N}), \ t>0,
\end{align}
where $C>0$ and $\omega\in\R$ are constants -- see \cite[Th. B.1.1]{MR0670130} and, additionally, due to \cite[Cor. B.3.2]{MR0670130}, if $1\leq p\leq +\infty$, then
\begin{equation}\label{est-1-a}
S_A(t)u\in C(\R^N) \ \mbox{ for any } u\in L^p(\R^N),\ t>0.             
\end{equation}
In the rest of this section, we assume that $\gamma>0$ is an arbitrary number such that
\begin{equation} \label{del-alph-zer}
\gamma < \inf\sigma_{ess}(A) - \lambda \quad \mbox{ and } \quad  \gamma + \lambda \not\in\sigma(A).
\end{equation}
By the spectral theorem for self-adjoint operators, we have the direct sum decomposition 
\begin{equation}\label{eq-decomp}
X = X_- \oplus X_0 \oplus X_{+}^{d} \oplus X_{+}^{e}
\end{equation}
into closed, mutually orthogonal subspaces such that $X_0 = \ker (A-\lambda)$ and
\begin{gather*}
\sigma (A_{X_-}) = \sigma(A)\cap (-\infty,\lambda), \quad \sigma (A_{X_{+}^{d}}) = \sigma(A)\cap (\lambda,\lambda + \gamma), \quad \sigma (A_{X_{+}^{e}})=\sigma(A)\cap(\lambda+\gamma,+\infty),
\end{gather*}
		where $A_{X_-}$, $A_{X_{+}^{d}}$, and $A_{X_{+}^{e}}$ are parts of $A$ in $X_{-}$, $X_{+}^{d}$, and $X_{+}^{e}$, respectively. Observe that $X_-$, $X_0$, and $X_+^d$ are finite-dimensional. Let $Q_{-}$, $P$, $Q_{+}^{d}$, and $Q_{+}^{e}$ be     
        orthogonal projections onto the components of the decomposition \eqref{eq-decomp}. We denote $X_{+}:=X_{+}^{d}\oplus X_{+}^{e}$ and $Q_+:=Q_{+}^{d}+Q_{+}^{e}$. In the following, we verify that the operators $Q_{\pm}$ and $P$ are bounded with respect to the Lebesgue norms.
		\begin{proposition}\label{prop-inclusion}
			For any $1\le p\le \infty$, the following inequalities hold \footnote{ \ We will write $A(u)\lesssim B(u)$, where $A, B$ are non-negative functions dependent on the argument $u$, if there is a constant $C\ge 0$ such that $A(u)\le CB(u)$ for all $u$.}
			\begin{align}\label{est-1aa}
				& \|Q_{-}u\|_{L^{p}}\lesssim \|u\|_{L^{p}},\quad \|Pu\|_{L^{p}}\lesssim \|u\|_{L^{p}},\\
				& \|Q_{+}^{d}u\|_{L^{p}} \lesssim \|u\|_{L^{p}}, \quad \|Q_{+}^{e}u\|_{L^{p}} \lesssim \|u\|_{L^{p}}, 
			\end{align}
			for all $u\in L^{2}(\R^{N})\cap L^{p}(\R^{N})$. 
		\end{proposition}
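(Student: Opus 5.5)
The projections $Q_-$, $P$ and $Q_+^d$ have finite rank, so I treat them first and deduce the bounds for $Q_+^e$ from the identity $I = Q_- + P + Q_+^d + Q_+^e$ on $L^2(\R^N)$.

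\textbf{Finite-rank projections.} Write
\[
Q_-u=\sum_{k=1}^{n}(u,e_k)_{L^2}e_k,
\]
where $\{e_k\}$ is an orthonormal basis of $X_-$ made of eigenfunctions of $A$; the same form holds for $P$ and $Q_+^d$. By H\"older's inequality, with $p'$ the conjugate exponent of $p$,
\[
\|Q_-u\|_{L^p}\le \sum_k \|e_k\|_{L^{p'}}\|e_k\|_{L^p}\|u\|_{L^p}.
\]
So it is enough to show that every eigenfunction $e$ of $A$ with eigenvalue $\mu$ lies in $L^q(\R^N)$ for all $q\in[1,\infty]$.

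\textbf{Eigenfunctions in $L^\infty$ and $L^2$.} From $S_A(t)e=e^{-\mu t}e$ and \eqref{est-3} we get
\[
\|e\|_{L^\infty}=e^{\mu}\|S_A(1)e\|_{L^\infty}\lesssim\|e\|_{L^2}.
\]
Hence $e\in L^2\cap L^\infty$, and by interpolation $e\in L^q$ for every $q\in[2,\infty]$.

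\textbf{Eigenfunctions in $L^q$, $1\le q<2$.} This is the main obstacle: we need decay of $e$ at infinity. Two routes are available.

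\begin{itemize}
\item The first uses the fact that $\mu$ lies below the essential spectrum, since $\mu<\lambda+\gamma<\inf\sigma_{ess}(A)$. Combined with the Kato-class assumption, the standard results (\cite[Th.\ C.3.4]{MR0670130}, based on Agmon-type and Feynman--Kac estimates) show that $e$ decays exponentially, $|e(x)|\lesssim e^{-\eta|x|}$ for some $\eta>0$. Together with $e\in L^\infty$ this gives $e\in L^1$, hence $e\in L^q$ for all $q$.
\item The alternative is to rely on Section~3, which supplies $L^1$ estimates for $S_A$. I would show that $(A-\mu')^{-1}$ is bounded on $L^1$ when $\mu'$ is sufficiently negative, then apply a Combes--Thomas/Agmon weighting argument using $\mu<\inf\sigma_{ess}(A)$.
\end{itemize}

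I would invoke the first route and cite it, adding a sketch: split $V=V\chi_{B_R}+V\chi_{B_R^c}$, note that the Kato norm of the exterior part is small, and apply a weighted resolvent estimate.

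\textbf{The infinite-rank projection.} For $u\in L^2\cap L^p$,
\[
Q_+^eu=u-Q_-u-Pu-Q_+^du,
\]
so
\[
\|Q_+^eu\|_{L^p}\le\|u\|_{L^p}+\|Q_-u\|_{L^p}+\|Pu\|_{L^p}+\|Q_+^du\|_{L^p}\lesssim\|u\|_{L^p}.
\]
By the same identity, $Q_+=Q_+^d+Q_+^e$ satisfies the analogous bound.
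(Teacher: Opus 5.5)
Your proposal is correct and follows the paper's proof essentially step by step. You expand the finite-rank projections in an orthonormal eigenbasis, bound the coefficients by H\"older using that each eigenfunction lies in every $L^q$ thanks to the exponential decay from Th.~C.3.4 (applicable since the relevant eigenvalues lie below $\lambda+\gamma<\inf\sigma_{ess}(A)$), and then get the $Q_+^e$ bound from $Q_+^e=I-Q_--P-Q_+^d$.

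Your separate $L^\infty$ step via \eqref{est-3} is harmless but redundant, since the cited pointwise bound $|e_k(x)|\lesssim e^{-\eta|x|}$ already gives $e_k\in L^1\cap L^\infty$. You should drop the sketched splitting of $V$: the Kato norm of $V\chi_{B_R^c}$ need not be small for non-decaying $V$ (e.g.\ $V\equiv 1$), so simply citing the theorem, as the paper does, is the right move.
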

		\begin{proof}
			Let the vectors $\{e_{k} \ | \ 1 \le k \le N_{-}\}$, with $N_- =\dim X_-$, form an orthonormal basis of the finite-dimensional subspace $X_{-}$. Then
			\begin{align}\label{def-q-p}
				Q_{-}u = \sum_{k=1}^{N_{-}} \alpha_{k}(u)e_{k},\quad u\in L^{2}(\R^{N}),
			\end{align}
			where the functionals $\alpha_{k}$ are defined by
			\begin{align*}
				\alpha_{k}(u) := \int_{\R^{N}}e_{k}(x)u(x)\,dx,\quad 1\le k\le N_{-}.
			\end{align*}
			Since any eigenfunction $v$ corresponding to an isolated eigenvalue of finite multiplicity satisfies 
			\begin{align}\label{est-eigen}
			|v(x)|\le Ce^{-\omega |x|},\qquad x\in\R^{N},
			\end{align} 
			for some constants $\omega>0$ and $C>0$ (possibly depending on $v$) -- see  \cite[Th. C.3.4]{MR0670130} -- we infer that  $e_{k}\in L^{q}(\R^{N})$ for all $q\in[1,\infty]$ and each $k=1,\ldots, N_-$. Hence, if $u\in L^{2}(\R^{N})\cap L^{p}(\R^{N})$, then 
$$|\alpha_{k}(u)|\le \|u\|_{L^{p}}\|e_{k}\|_{L^{p'}}, \quad 1\le k\le N_{-},$$ where $p'$ denotes the conjugate Lebesgue exponent of $p\in[1,\infty]$. Combining this with \eqref{def-q-p}, gives $Q_{-}u\in X_{-}\cap L^{p}(\R^{N})$ and the estimate
			\begin{gather}\label{ineq-q-m}
				\|Q_{-}u\|_{L^{p}} \le \sum_{k=1}^{N_{-}}|\alpha_{k} (u)| \|e_{k}\|_{L^{p}} \le  
				\|u\|_{L^{p}}\sum_{k=1}^{N_{-}}\|e_{k}\|_{L^{p'}} \|e_{k}\|_{L^{p}} \lesssim \|u\|_{L^{p}}.
			\end{gather}
			\indent Since $X_{0}$ and $X_{+}^{d}$ are also finite-dimensional subspaces spanned by eigenfunctions corresponding to isolated eigenvalues, we similarly deduce that $Pu\in X_{0}\cap L^{p}(\R^{N})$, $Q_{+}^{d}u\in X_{+}^{d}\cap L^{p}(\R^{N})$ and that the following estimates hold:
			\begin{gather}\label{ineq-q-m22}
				\|Pu\|_{L^{p}}\lesssim \|u\|_{L^{p}},  \quad  \|Q_{+}^{d}u\|_{L^{p}} \lesssim \|u\|_{L^{p}},\quad u\in L^{2}(\R^{N})\cap L^{p}(\R^{N}).
			\end{gather}
			\indent Finally, combining  $Q_{+}^{e} = I - Q_{+}^{d} - P - Q_{-}$ with \eqref{ineq-q-m} and \eqref{ineq-q-m22} yields
			\begin{align*}
				\|Q_{+}^{e}u\|_{L^{p}} & \le \|u\|_{L^{p}} + \|Q_{+}^{d}u\|_{L^{p}} +\|Pu\|_{L^{p}} + \|Q_{-}u\|_{L^{p}} \lesssim \|u\|_{L^{p}},\quad u\in L^{2}(\R^{N})\cap L^{p}(\R^{N}),
			\end{align*}
			thereby completing the proof of the proposition.
		\end{proof}
		Since the operator $A_{X_+}$ is self-adjoint and $\sigma(A_{X_+}) \subset (\lambda, +\infty)$, it is sectorial (see, e.g., \cite[Prop. 1.3.3]{dlotko-cholewa}). Consequently, $-A_{X_+}$ generates the analytic $C_0$-semigroup that coincides with $S_A(t)$ restricted to $X_+$. Applying \cite[Th. 6.13]{Pazy} and using the equality $X^{1/2} = H^1(\R^N)$ together with the equivalence of the corresponding norms, we obtain constants $\delta_+>0$ and $C_+>0$ such that 
		\begin{align}\label{est-11aa}
		\|S_{A}(t)u\|_{L^{2}} & \le e^{-(\lambda+\delta_{+})t}\|u\|_{L^{2}},  && \hspace{-70pt} t\ge 0, \ u\in X_{+}, \\ \label{est-11bb}
		\|S_{A}(t)u\|_{H^{1}} & \le C_{+} t^{-1/2} e^{-(\lambda+\delta_{+})t}\|u\|_{L^{2}}, && \hspace{-70pt} t > 0, \ u\in X_{+}.
		\end{align} 
Similarly, since $A_{X_{+}^{e}}$ is self-adjoint and $\sigma(A_{X_{+}^{e}})\subset (\lambda+\gamma,+\infty)$, we obtain 
		\begin{align}\label{ine11}
			\|S_{A}(t)u\|_{L^{2}} \le e^{-(\lambda+\gamma) t}\|u\|_{L^{2}}, \quad t\ge 0, \ u\in X_{+}^{e}.
		\end{align}
For the subspace $X_{-}$, the semigroup $\{S_{A_{X_-}} (t)\}_{t\geq 0}$ can be extended to a $C_0$-group of bounded operators, and there exist $\delta_- > 0$ and $C_->0$ such that
\begin{equation}\label{ine33}
\|S_{A_{X_-}}(t)u\|_{L^{2}} \le C_{-} e^{(-\lambda+\delta_{-}) t}\|u\|_{L^{2}},  \quad  t\le 0, \ u\in X_{-}.
\end{equation} 
Using the above estimates, we can derive the following compactness lemma.
\begin{lemma}\label{lem-con-sem}
For any bounded $W\subset L^2(\R^N)$, one has
\begin{equation}\label{rownorm}
\beta_{L^{2}}(S_{A-\lambda}(t)W) \le e^{-\gamma t}\beta_{L^{2}}(W), \quad t\ge 0,
\end{equation}
where $\beta_{L^{2}}$ denotes the Hausdorff measure of noncompactness in the space $L^{2}(\R^{N})$.
\end{lemma}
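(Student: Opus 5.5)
We use the orthogonal decomposition \eqref{eq-decomp} and split $u\in W$ as $u = (Q_- + P + Q_+^d)u + Q_+^e u$. The first part lives in the finite-dimensional space $X_-\oplus X_0\oplus X_+^d$, which carries no noncompactness. Here $S_{A-\lambda}(t)=e^{\lambda t}S_A(t)$ denotes the semigroup generated by $-(A-\lambda)$. The subspaces are invariant under $S_A(t)$, since they are spectral subspaces of the self-adjoint operator $A$. Hence
\[
S_{A-\lambda}(t)W \subset S_{A-\lambda}(t)(Q_-+P+Q_+^d)W + S_{A-\lambda}(t)Q_+^eW .
\]
The first set is bounded, because $W$ is bounded and $S_{A-\lambda}(t)$ is bounded by \eqref{est-1}. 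It also lies in a finite-dimensional space, so it is relatively compact and its measure of noncompactness vanishes. By subadditivity of $\beta_{L^2}$ with respect to algebraic sums, we obtain
\[
\beta_{L^2}(S_{A-\lambda}(t)W)\le \beta_{L^2}(S_{A-\lambda}(t)Q_+^eW).
\]

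For the second term we use \eqref{ine11}, which gives $\|S_{A-\lambda}(t)v\|_{L^2}\le e^{-\gamma t}\|v\|_{L^2}$ for $v\in X_+^e$. Thus $S_{A-\lambda}(t)$ restricted to $X_+^e$ is linear with norm at most $e^{-\gamma t}$. If $Q_+^eW$ is covered by finitely many balls of radius $r$ with centers in $X_+^e$, their images are balls of radius $e^{-\gamma t}r$ covering $S_{A-\lambda}(t)Q_+^eW$. Hence
\[
\beta_{L^2}(S_{A-\lambda}(t)Q_+^eW)\le e^{-\gamma t}\beta_{L^2}(Q_+^eW).
\]
We may take the centers in $X_+^e$ because applying the orthogonal projection $Q_+^e$ to arbitrary centers does not increase distances to points of $X_+^e$.

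Finally, we need $\beta_{L^2}(Q_+^eW)\le\beta_{L^2}(W)$. Since $Q_+^e$ is an orthogonal projection of norm at most $1$, any cover of $W$ by balls $B(c_i,r)$ yields a cover of $Q_+^eW$ by the balls $B(Q_+^ec_i,r)$. Combining the three inequalities gives \eqref{rownorm}.

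The only delicate point is the bookkeeping of the measure of noncompactness. One must check that the chosen definition of $\beta$ (centers in the whole space, or in the set) is compatible with the subadditivity step and with the projection step. With the standard Hausdorff measure, whose centers lie in the ambient space $L^2(\R^N)$, both steps are routine.
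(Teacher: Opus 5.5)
Your proposal is correct and follows the paper's proof step by step. Like the paper, you discard the finite-dimensional part $X_-\oplus X_0\oplus X_+^d$ via subadditivity, apply the contraction estimate \eqref{ine11} on $X_+^e$, and conclude with $\|Q_+^e\|\le 1$; your remark that ball centers may be taken in $X_+^e$ is a detail the paper leaves implicit, and you handle it correctly.
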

\begin{proof}
Observe that by \eqref{ine11}, we have
\begin{equation}\label{rnon1}
\|S_{A-\lambda}(t)u\|_{L^{2}} \le e^{-\gamma t} \|u\|_{L^{2}}, \quad t\ge 0, \ u\in X_+^{e}.
\end{equation}
Let $W \subset X$ be any bounded set and $t \ge 0$. Then, by the standard properties of the measure $\beta_{L^2}$ (see \cite{MR1189795}), we have
$$
\beta_{L^{2}}(S_{A-\lambda}(t) W ) \le \beta_{L^{2}}(S_{A-\lambda}(t) (I-Q_+^{e}) W) 
+ \beta_{L^{2}}(S_{A-\lambda} (t)Q_+^{e} W) = \beta_{L^{2}}(S_{A-\lambda}(t)Q_{+}^{e} W),
$$
where the equality follows from the fact that the bounded set $S_{A-\lambda}(t) (I-Q_{+}^{e})W$ is contained in the finite-dimensional space $X_-\oplus X_0 \oplus X_{+}^{d}$. Then, in view of \eqref{rnon1} we infer that
$$
\beta_{L^{2}}(S_{A-\lambda}(t)Q_{+}^{e}W) \leq e^{-\gamma t} \beta_{L^{2}}(Q_{+}^{e}W).
$$
Using the fact that $\|Q_{+}^{e} \|_{\mathcal{L}(L^2, L^2)}=1$, we finally obtain
$$
\beta_{L^{2}}(Q_{+}^{e}W) \leq \beta_{L^2}(W),
$$
which combined with the previous inequalities proves \eqref{rownorm}, as desired.
\end{proof}

\section{\texorpdfstring{$L^1$ estimates for Schr\"odinger semigroup}
                          {L1 estimates for Schrodinger semigroup}}
A natural approach to extending the Laplace operator to other Lebesgue spaces is through convolution with the heat kernel
\[
K(t,x) := \frac{1}{(4\pi t)^{N/2}} \exp\Big(-\frac{|x|^2}{4t}\Big).
\]
Then the family of operators $\{S(t)\}_{t \ge 0}$ given by
\begin{equation}\label{heat-semi}
(S(t)u)(x) := \int_{\mathbb{R}^N} K(t,x-y)\, u(y)\, dy
\end{equation}
defines a Gaussian $C_0$-semigroup on $L^p(\mathbb{R}^N)$ for any $1 \le p < \infty$ (see, e.g., \cite[p.~150]{MR2798103}). We denote by $A_{0,p}$ the generator of this semigroup, which provides a realization of the $N$-dimensional Laplace operator in $L^p(\mathbb{R}^N)$. In particular, $A_{0,2} = A_0$. We recall that using the characterization of the Kato class \cite[Prop.~A.2.3]{MR0670130}, for any $\varepsilon > 0$ there exists a constant $C_\varepsilon > 0$ such that
\begin{equation}\label{ineq-l1}
\|Vu\|_{L^1} \leq \varepsilon \|A_{0,1} u\|_{L^1} + C_\varepsilon \|u\|_{L^1}, \quad u \in D(A_{0,1}),
\end{equation}
where $A_{0,1}$ denotes the generator of the semigroup \eqref{heat-semi} on $\tilde X := L^{1}(\mathbb{R}^{N})$. Since $A_{0,1}$ is a sectorial operator (see \cite[p.~150]{MR2798103} and \cite[Th.~3.7.11]{MR2798103}), the inequality \eqref{ineq-l1}, together with the perturbation result \cite[Prop.~1.3.2]{dlotko-cholewa}, implies that $\tilde A := A_{0,1} + V$ is also sectorial. 
By \cite[Th. 1]{MR0836002}, we have $\sigma(\tilde A) = \sigma(A) \subset \R$. Moreover, from the proof of \cite[Th. 3.1]{MR0869525}, it follows that for every $\mu \in \varrho(A) = \varrho(\tilde A)$ and every $u \in L^{1}(\mathbb{R}^{N}) \cap L^{2}(\mathbb{R}^{N})$, the resolvents of $A$ and $\tilde A$ coincide in $u$, that is,
\begin{equation}\label{res-eq}
(\mu I - \tilde A)^{-1}u = (\mu I - A)^{-1}u.
\end{equation}
Consequently, using the Euler formula for $C_{0}$-semigroups (see \cite[Th.~1.8.3]{Pazy}), we obtain
\begin{equation}\label{eq-sem-1}
S_{A}(t) u = S_{\tilde A}(t) u, \quad u \in L^{1}(\mathbb{R}^{N}) \cap L^{2}(\mathbb{R}^{N}), \ t \ge 0.
\end{equation}
The following proposition establishes the $L^{1}$ estimates for the semigroup generated by $A$.
\begin{proposition}\label{prop-est-l1}
If $\inf\sigma_{ess}(A) > \lambda$, then there exist constants $C_{Q} >0$ and $\delta_{Q} > 0$ such that
\begin{align*}
\|Q_{+}S_{A-\lambda}(t) u\|_{L^{1}} \le C_{Q} e^{-\delta_{Q} t} \|Q_{+}u\|_{L^{1}}, \quad t \ge 0,
\end{align*}
for $u\in L^{2}(\R^{N})\cap L^{1}(\R^{N})$.
\end{proposition}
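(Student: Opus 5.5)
Since $\tilde A$ is sectorial on $L^1(\R^N)$ with $\sigma(\tilde A)=\sigma(A)$, the plan is to work in $\tilde X=L^1(\R^N)$ and use a spectral projection there. By Proposition~\ref{prop-inclusion} with $p=1$, the projections $Q_-$, $P$, $Q_+^d$ are given by finite sums of integrals against exponentially decaying eigenfunctions. Hence they extend to bounded projections on $L^1$, and so does $Q_+=I-Q_--P$. These extensions coincide with the Riesz spectral projections of $\tilde A$ associated with the isolated finite part $\sigma(A)\cap(-\infty,\lambda]$ and the remaining part $\sigma(A)\cap(\lambda,\infty)$. To see this, compare the Riesz integrals $\frac{1}{2\pi i}\oint_\Gamma(\mu-\tilde A)^{-1}\,d\mu$ and $\frac{1}{2\pi i}\oint_\Gamma(\mu-A)^{-1}\,d\mu$ over a contour $\Gamma$ around $\sigma(A)\cap(-\infty,\lambda]$. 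They agree on $L^1\cap L^2$ by \eqref{res-eq}, and $L^1\cap L^2$ is dense in $L^1$.

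Next, let $\tilde X_+:=Q_+L^1$. This space is invariant under $S_{\tilde A}(t)$, and the part $\tilde A_+$ of $\tilde A$ in $\tilde X_+$ is sectorial with $\sigma(\tilde A_+)=\sigma(A)\cap(\lambda,\infty)$. Because $\sigma(A)\cap(-\infty,\lambda+\gamma)$ is finite and $\inf\sigma_{ess}(A)>\lambda$, we get $\inf\sigma(\tilde A_+)\ge\lambda+2\delta_Q$ for some $\delta_Q>0$. For analytic semigroups the growth bound equals the spectral bound, so
$$\|S_{\tilde A_+}(t)\|_{\mathcal L(\tilde X_+)}\le C_Qe^{-(\lambda+\delta_Q)t}, \quad t\ge0.$$

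Finally, take $u\in L^1\cap L^2$. Then $Q_+u\in L^1\cap L^2$, and the semigroup commutes with the spectral projection: $Q_+S_A(t)u=S_A(t)Q_+u$. By \eqref{eq-sem-1}, $S_A(t)Q_+u=S_{\tilde A}(t)Q_+u=S_{\tilde A_+}(t)Q_+u$. Multiplying by $e^{\lambda t}$ gives the stated bound.

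\emph{Main obstacle.} The delicate step is identifying the $L^1$ spectrum of the restricted operator, i.e.\ checking that $\sigma(\tilde A_+)$ contains no points $\le\lambda$. This rests on \cite[Th.~1]{MR0836002} (spectral $p$-independence) together with the fact that the Riesz projection of $\tilde A$ removes exactly the finitely many eigenvalues up to $\lambda$. That projection has finite rank equal to $\dim(X_-\oplus X_0)$, since its range is spanned by the same eigenfunctions; these lie in $L^1$ by \eqref{est-eigen}.
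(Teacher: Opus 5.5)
Your proof is correct and follows essentially the same route as the paper. Both arguments take the Riesz projections of $\tilde A$ in $L^1$ and match them with the $L^2$ spectral projections through \eqref{res-eq} and density; both use $p$-independence of the spectrum to place the remaining spectrum in $(\lambda,\infty)$, obtain exponential decay of the analytic semigroup on the complementary subspace, and pass back to $S_A$ via \eqref{eq-sem-1}. The one difference is that you use a single contour and identify the $L^1$ projection directly with the continuous extension of $Q_-+P$ from Proposition~\ref{prop-inclusion}; this makes the paper's per-eigenvalue identification of eigenspaces and multiplicities unnecessary for this estimate.
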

\begin{proof}
Since $A$ is self-adjoint, the part of its spectrum $(-\infty,\mu_{e})\cap\sigma(A)$, where $\mu_{e}:=\inf\sigma_{\mathrm{ess}}(A)$, consists of isolated eigenvalues with finite multiplicity. Moreover, from \cite[Th. 1]{MR0836002} it follows that the part of the spectrum in the half-plane $\{\mathrm{Re}\,z < \mu_e\}$ is identical for both $A$ and $\tilde A$ allowing us to define
$$\sigma^{\lambda}_{-} := \sigma(A)\cap \{\mathrm{Re}\,z\le \lambda\} = \sigma(\tilde A)\cap\{\mathrm{Re}\,z\le \lambda\}.$$ 
For each $\mu \in \sigma_{-}^{\lambda}$, let $\tilde{P}_{\mu}$ be the restriction to $\tilde{X}$ of the Riesz projection defined on $\tilde{X}_{\mathbb{C}}$ by formula \eqref{riesz-proj}. Analogously, we denote by $P_{\mu}$ the corresponding projection on the Hilbert space $X$ associated with the same eigenvalue.
By the consistency of the resolvents \eqref{res-eq}, these projections satisfy 
\begin{equation}\label{eq-p}
P_{\mu}u = \tilde{P}_{\mu}u, \quad u \in L^1(\R^{N}) \cap L^2(\R^{N}).
\end{equation}
Let us denote $\tilde{X}_\mu = \tilde P_{\mu}(\tilde X)$. We claim that $\tilde{X}_\mu = \ker(\mu I - \tilde{A})$. From \cite[Prop. 6.3]{MR1361167} it follows that $\ker(\mu I - \tilde{A}) \subset \tilde{X}_\mu$. To verify the reverse inclusion, let $u \in \tilde{X}_\mu$. By a density argument, there exists a sequence $(u_n) \subset L^1(\mathbb{R}^N) \cap L^2(\mathbb{R}^N)$ such that $u_n \to u$ in $L^1(\mathbb{R}^N)$. The continuity of the projection $\tilde P_{\mu}$ and \eqref{eq-p} imply that
\begin{equation}
    P_\mu u_n = \tilde{P}_\mu u_n \to \tilde{P}_\mu u = u \quad \text{in } L^1(\mathbb{R}^N).
\end{equation}
Since each $P_\mu u_n$ belongs to the finite-dimensional space $\ker(\mu I - A)$, which is necessarily closed in $L^1(\mathbb{R}^N)$, it follows that the limit $u$ belongs to $\ker(\mu I - A)$. Since $\ker(\mu I - A) \subset \ker(\mu I - \tilde{A})$, we conclude that $u \in \ker(\mu I - \tilde{A})$, as desired.

By \cite[Th. 1]{MR0869525}, each eigenvalue in $\sigma^{\lambda}_{-}$ has the same geometric and algebraic multiplicities for $A$ and $\tilde A$, and the corresponding eigenspaces coincide. Hence, $\tilde X_{0} := \ker(\tilde A - \lambda) = X_{0}$ and 
\begin{equation}\label{eq-sets}
\tilde X_{-} := \bigoplus_{\mu\in \sigma^{\lambda}_{-}\setminus\{\lambda\}}\mathrm{Ker}\,(\tilde A - \mu) = \bigoplus_{\mu\in \sigma^{\lambda}_{-}\setminus\{\lambda\}}\mathrm{Ker}\,(A - \mu) = X_{-}.
\end{equation}
If we denote $\sigma^{\lambda}_{+}:=\sigma(\tilde A)\setminus\sigma^{\lambda}_{-}$, then applying the spectral theorem (see Appendix), we obtain a direct sum decomposition $L^{1}(\R^{N})=\tilde X_- \oplus \tilde X_0 \oplus \tilde X_+$ into closed subspaces of $L^1(\R^{N})$ that are invariant under $A_{1}$, and 
$$\sigma (\tilde A\,|\,\tilde X_-) = \sigma^{\lambda}_{-}\setminus\{\lambda\}\subset\R, \quad  \sigma (\tilde A\,|\,\tilde X_+) = \sigma^{\lambda}_{+}.$$ 
Moreover, since $\tilde A$ is sectorial, by \cite[Th. 1.5.3]{Henry}, there exist $C_{Q}>0$ and $\delta_{Q} > 0$ such that 
\begin{equation}\label{ineq-sem-1}
\|S_{\tilde A-\lambda} (t)u\|_{L^1}\leq C_{Q} e^{-\delta_{Q} t} \| u\|_{L^{1}}\quad   \mbox{ for} \ \ u\in \tilde X_+, \ t\geq 0.
\end{equation}
Let us denote by
\begin{equation*}
    \tilde{Q}_{-} := \sum_{\mu \in \sigma^{\lambda}_{-} \setminus \{\lambda\}} \tilde{P}_{\mu}, \quad \tilde{P} := \tilde{P}_{\lambda}, \quad \text{and} \quad \tilde{Q}_{+} := I - \tilde{Q}_{-} - \tilde{P}
\end{equation*}
the projections in $L^1(\mathbb{R}^N)$ onto the subspaces $\tilde{X}_{-}$, $\tilde{X}_0$, and $\tilde{X}_{+}$, respectively.
In view of \eqref{eq-sets}, we have $X_{-}=\tilde X_{-}$ and $X_{0}=\tilde X_{0}$, which implies 
$$
Q_- u = \tilde Q_- u,  \  \  \  Pu = \tilde P u \ \mbox{ and }   \  Q_+ u= \tilde Q_+ u \quad\text{for} \ \  u\in L^1(\R^N) \cap L^2(\R^N).
$$
Combining this with  \eqref{eq-sem-1}, for any $u\in L^1(\R^N)\cap L^2(\R^N)$, we obtain
$$
Q_+ S_{A-\lambda}(t)u = S_{A-\lambda}(t) Q_+ u = S_{A-\lambda}(t) \tilde Q_+ u = S_{\tilde A-\lambda}(t) \tilde Q_+ u, \quad t \ge 0,
$$
which, together with \eqref{ineq-sem-1}, gives
$$
\|Q_+ S_{A-\lambda}(t)u \|_{L^{1}}\leq C_{Q} e^{-\delta_{Q} t} \| \tilde Q_+ u\|_{L^1} = C_{Q} e^{-\delta_{Q} t}\| Q_+ u\|_{L^1}, \quad t \ge 0,
$$
concluding the proof.
\end{proof}
		
\section{\texorpdfstring{$L^\infty$ estimates for projections of $H^1$-bounded solutions}{L infinity-estimates of H1-bounded solutions}}\label{sec-3}
Let us consider the differential equation
\begin{equation}\label{main-eq}
\dot u (t) = -A u(t) + \lambda u(t) + F(u(t)), \quad t>0,
\end{equation}
where $\lambda$ is a real number, $A =-\Delta+V$ is the Schr\"{o}dinger operator on the space $X=L^2(\R^N)$ defined in Section \ref{sec-2}, and $F:H^1(\R^N) \to L^2(\R^N)$ is a nonlinear map satisfying the Lipschitz condition
\begin{equation}\label{F-lip-cond}
\|F( u_{1})-F( u_{2})\|_{L^2}\leq L \|u_{1}-u_{2}\|_{H^1} \quad  \text{ for all } \ \ u_{1}, u_{2}\in H^1(\R^N),\\
\end{equation}
for some constant $L>0$. It is well known (see, e.g., \cite[Th. 3.3.3]{Henry}) that for every 
$u_0 \in X^{1/2} = H^1(\mathbb{R}^N)$, there exists a unique global solution $u=u(\,\cdot\,; u_{0}):[0,\infty) \to H^1(\R^N)$ of the equation \eqref{main-eq} such that $u(0)= u_{0}$ and 
\begin{equation}\label{sol-reg}
u\in C([0,+\infty), H^1(\R^N))\cap C((0,+\infty), D(A)) \cap C^1((0,+\infty), L^2(\R^N)).
\end{equation}
		Consequently, the equation \eqref{main-eq} determines the semiflow $\{\Phi_t\}_{t\geq 0}$ on the space $H^1(\R^N)$, given by
		\begin{equation}\label{Phi-par-sem-def}
			\Phi(t, u_{0}) := u (t; u_{0}), \quad u_{0}\in H^1(\R^N), \ t\ge 0.
		\end{equation}
In this section, our aim is to establish estimates for full solutions for the semiflow $\{\Phi_{t}\}_{t\ge 0}$ under the assumption that the mapping $F$ is $L^\infty$-bounded, that is, there exists a constant $M>0$ such that
\begin{equation}\label{F-L-infty-bdd}
\|F( u) \|_{L^{\infty}} \le M, \quad \mbox{ for all } u \in H^{1}(\R^{N}).
\end{equation}
		\begin{proposition}
			Let us assume that $u\in C(\R,H^{1}(\R^{N}))$ is a solution of \eqref{main-eq}. Then $u(t)\in L^{\infty}(\R^{N})\cap C(\R^{N})$ for all $t\in\R$ and $u\in L^{\infty}_{\mathrm{loc}}(\R;L^{\infty}(\R^{N}))$. 
		\end{proposition}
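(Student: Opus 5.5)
The plan is to use the variation of constants formula on a time interval of length one, and then to exploit the smoothing estimates \eqref{est-2}, \eqref{est-3} and the continuity property \eqref{est-1-a}.

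Fix $t\in\R$ and set $s:=t-1$. Since $u$ is a full solution with values in $H^1(\R^N)$ and $F(u(\cdot))\in C(\R,L^2(\R^N))$ by \eqref{F-lip-cond}, the standard theory (\cite[Th. 3.3.3]{Henry}) gives, for $\tau\in[s,s+1]$,
\begin{equation*}
u(\tau) = S_{A-\lambda}(\tau-s)u(s) + \int_s^\tau S_{A-\lambda}(\tau-r)F(u(r))\,dr .
\end{equation*}

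\textbf{The linear term.} By \eqref{est-3},
\[
\|S_{A-\lambda}(\tau-s)u(s)\|_{L^\infty}\le C(\tau-s)^{-N/4}e^{(\omega+\lambda)(\tau-s)}\|u(s)\|_{L^2}.
\]
At $\tau=t$ this is bounded by $C e^{\omega+\lambda}\|u(t-1)\|_{L^2}$. Moreover, \eqref{est-1-a} gives $S_{A-\lambda}(1)u(s)\in C(\R^N)$.

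\textbf{The integral term.} Each $F(u(r))$ lies in $L^2\cap L^\infty$ with $\|F(u(r))\|_{L^\infty}\le M$ by \eqref{F-L-infty-bdd}. Hence \eqref{est-2} yields
\[
\|S_{A-\lambda}(t-r)F(u(r))\|_{L^\infty}\le C e^{(\omega+\lambda)(t-r)}M .
\]
The Bochner integral converges in $L^2$, but we need an $L^\infty$ bound. To get it, I would pair with $\varphi\in L^1\cap L^2$: $\langle \int \cdot\, dr,\varphi\rangle=\int\langle \cdot,\varphi\rangle dr$, which is bounded by $\int_s^t Ce^{(\omega+\lambda)(t-r)}M\|\varphi\|_{L^1}\,dr$. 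Duality ($L^\infty=(L^1)^*$, with $L^1\cap L^2$ dense in $L^1$) then gives the bound $C'M$. Thus $u(t)\in L^\infty$ with
\[
\|u(t)\|_{L^\infty}\le C(\|u(t-1)\|_{L^2}+M).
\]
Since $\|u(\cdot)\|_{L^2}\lesssim\|u(\cdot)\|_{H^1}$ is bounded on compact time intervals by continuity of $u$, we obtain $u\in L^\infty_{\mathrm{loc}}(\R;L^\infty(\R^N))$.

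\textbf{Continuity in $x$.} This is the main obstacle. Each integrand $S_{A-\lambda}(t-r)F(u(r))$ is continuous by \eqref{est-1-a} for $r<t$, but uniform limits must be handled with care near $r=t$, where no smoothing occurs. My plan is to split the integral at $t-\varepsilon$. The piece over $[t-\varepsilon,t]$ has $L^\infty$ norm at most $C M\varepsilon$, by the duality bound above. The piece over $[s,t-\varepsilon]$ equals $S_{A-\lambda}(\varepsilon)w_\varepsilon$, where
\[
w_\varepsilon:=\int_s^{t-\varepsilon}S_{A-\lambda}(t-\varepsilon-r)F(u(r))\,dr\in L^2\cap L^\infty
\]
by the same estimates. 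Since $w_\varepsilon\in L^2$, \eqref{est-1-a} shows $S_{A-\lambda}(\varepsilon)w_\varepsilon\in C(\R^N)$. Hence $u(t)$ is a uniform limit, as $\varepsilon\to0^+$, of continuous functions modulo a null set, so $u(t)$ has a representative in $C(\R^N)$. In this step I would also check that the representative coincides a.e. with the $L^2$ element, which follows since the convergence holds in $L^2$ as well.
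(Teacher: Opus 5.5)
Your $L^\infty$ bound and your spatial-continuity argument follow the paper's proof essentially step by step. Both use Duhamel's formula, \eqref{est-3} for the linear part, \eqref{est-2} plus $L^1$--$L^\infty$ duality for the Duhamel integral, and the splitting at $t-\ve$ with \eqref{est-1-a} and a uniform limit as $\ve\to0^+$.

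The gap is in the last claim, $u\in L^\infty_{\mathrm{loc}}(\R;L^\infty(\R^N))$. In the Bochner sense this requires $t\mapsto u(t)$ to be strongly measurable as an $L^\infty(\R^N)$-valued map, and the paper proves this in a separate step. Your argument only gives $\sup_{t\in I}\|u(t)\|_{L^\infty}<\infty$ on compact $I$. That bound, together with continuity of $u$ into $L^2$, does not imply strong measurability, because $L^\infty(\R^N)$ is not separable. For example, $t\mapsto\chi_{B(0,t)}$, $t\in[1,2]$, is continuous into $L^2(\R^N)$ and bounded in $L^\infty$. Its values are pairwise at $L^\infty$-distance $1$, so it is not almost separably valued and hence, by Pettis' theorem, not strongly measurable.

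You can repair this with your own splitting, written uniformly in $t$ instead of with the base point $s=t-1$ moving with $t$:
\[
u(t)=S_{A-\lambda}(\ve)u(t-\ve)+\int_{t-\ve}^{t}S_{A-\lambda}(t-r)F(u(r))\,dr,\qquad t\in\R .
\]
By your duality estimate, the integral has $L^\infty$ norm at most $CM\ve e^{|\omega+\lambda|\ve}$, uniformly in $t$. By \eqref{est-3}, $S_{A-\lambda}(\ve)$ is bounded from $L^2$ to $L^\infty$. Since $u\in C(\R,L^2(\R^N))$, the map $t\mapsto S_{A-\lambda}(\ve)u(t-\ve)$ is therefore continuous from $\R$ into $L^\infty(\R^N)$. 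Hence $u$ is a uniform limit of continuous $L^\infty$-valued maps, so $u\in C(\R;L^\infty(\R^N))$. This gives strong measurability and local boundedness at once.

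This is the paper's own device. Its auxiliary map $g(t)=S_{A-\lambda}(t-\ve-t')u(t')+I(t-\ve,t')$ is just $u(t-\ve)$, and the paper obtains strong measurability of $u$ as the limit of $S_{A-\lambda}(\ve)g(t)$ as $\ve\to0^+$.
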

		\begin{proof}
Observe that any full solution $u$ of the semiflow $\{\Phi_{t}\}_{t\ge0}$ satisfies the Duhamel formula
		\begin{align}\label{duhamel_eq}
			u(t) = S_{A-\lambda}(t-t')u(t') + \int_{t'}^{t} S_{A-\lambda}(t-\tau)F(u(\tau))\,d\tau
		\end{align}
for $t,t'\in \R$ with $t>t'$. By the inequalities \eqref{est-2} and \eqref{F-L-infty-bdd}, we have
			\begin{equation}
				\begin{aligned}\label{est-s-g}
					\|S_{A-\lambda}(t-\tau)F(u(\tau))\|_{L^\infty} & \leq C e^{(t-\tau)(\omega+\lambda)} \|F ( u(\tau))\|_{L^\infty} \leq  C M e^{(t-\tau)(\omega+\lambda)}
				\end{aligned}
			\end{equation}
			for all $\tau < t$. Let us define
			\begin{align*}
				I(t,t'):=\int_{t'}^{t}S_{A-\lambda}(t-\tau)F(u(\tau))\,d\tau.
			\end{align*}
			If we take $\varphi\in C^{\infty}_{c}(\R^{N})$, then applying \eqref{est-s-g} yields
			\begin{align*}
				\langle I(t,t'), \varphi \rangle_{L^{2}}
				& = \int_{t'}^{t}\langle S_{A-\lambda}(t-\tau) F(u(\tau)),\varphi\rangle_{L^{2}}\,d\tau\\
				& \le \int_{t'}^{t}\|S_{A-\lambda}(t-\tau) F(u(\tau))\|_{L^{\infty}}\|\varphi\|_{L^{1}}\,d\tau \\
				& \leq CM \|\varphi\|_{L^1} \int_{t'}^{t} e^{(t-\tau)(\omega+\lambda)} \,d\tau.
			\end{align*}
			This implies that $I(t,t')\in L^{1}(\R^{N})^{*}=L^{\infty}(\R^{N})$ and 
			\begin{align}\label{est-norm}
				\|I(t,t')\|_{L^\infty} \lesssim \int_{0}^{t-t'} e^{\tau(\omega+\lambda)} \,d\tau \ \mbox{ for all } \ t>t'.
			\end{align}
Let us take arbitrary $t \in \mathbb{R}$ and observe that for any $t > t'$ and sufficiently small $\varepsilon > 0$, we have
			\begin{align}\label{eq-linfty}
				u(t) = S_{A-\lambda}(t-t')u(t') + S_{A-\lambda}(\ve)I(t-\ve,t') + I(t,t-\ve).
			\end{align}
	Since $u(t')\in L^{2}(\R^{N})$ and $I(t-\ve,t')\in L^{2}(\R^{N})$, it follows from \eqref{est-3} and \eqref{est-1-a} that 
	$$
	S_{A-\lambda}(t-t')u(t') + S_{A-\lambda}(\ve)I(t-\ve,t') \in L^{\infty}(\R^{N})\cap C(\R^{N}).
	$$
	Combining this with \eqref{eq-linfty} and the fact that $I(t-\ve,t')\in L^{\infty}(\R^{N})$, gives $u(t)\in L^{\infty}(\R^{N})\cap C(\R^{N})$ as desired. We now show that $u : I \to L^{\infty}(\mathbb{R}^n)$ is strongly measurable on every compact interval $I \subset \mathbb{R}$. To this end, let $t'\in\R$ be such that $I\subset (t'+1, \infty)$. By the estimate \eqref{est-norm}, for sufficiently small $\ve>0$, we have
			\begin{equation}
				\begin{aligned}\label{eq-conv-1}
					\|u(t) - S_{A-\lambda}(t-t')u(t') - S_{A-\lambda }(\ve)I(t-\ve,t')\|_{L^\infty} = \|I(t,t-\ve)\|_{L^\infty} \lesssim 
					\int_{0}^{\ve} e^{s(\omega+\lambda)} \,ds
				\end{aligned}
			\end{equation}
for $t\in I$. Now let us note that the map $g:I\to L^2(\R^N)$  
			$$
			g(t) = S_{A-\lambda}(t-\ve-t')u(t') + I(t-\ve,t'), \quad t\in I,
			$$ is well-defined and continuous, and hence strongly measurable.
			Combining this with the inequality \eqref{est-3}, we deduce that  
			\begin{align*}
				S_{A-\lambda}(t-t')u(t') + S_{A-\lambda} (\ve)I(t-\ve,t') = S_{A-\lambda}(\ve)g(t), \quad t\in I,
			\end{align*}
is strongly measurable as an $L^\infty(\mathbb{R}^{N})$-valued function.
			Therefore, letting $\varepsilon\to 0$ in \eqref{eq-conv-1}, we conclude that $u:I\to L^\infty(\mathbb{R}^{N})$
is strongly measurable, being the pointwise limit of strongly measurable functions. By the inequalities \eqref{est-1} and \eqref{est-3}, we have, for $t\in I$, we have
			\begin{equation}
				\begin{aligned}\label{eq-ineq-1}
					\|S_{A-\lambda}(t-t')u(t')\|_{L^\infty} & = \|S_{A-\lambda}(t-t'-1/2)S_{A-\lambda}(1/2)u(t')\|_{L^\infty} \\
					&\lesssim (t-t'-1/2)^{-N/4}e^{(\omega+\lambda)(t-t'-1/2)} \|S_{A-\lambda}(1/2)u(t')\|_{L^{2}} \\
					&\lesssim 2^{N/4}e^{(\omega+\lambda)(t-t'-1/2)}e^{(\omega+\lambda)/2}\|u(t')\|_{L^{2}} \\
					&= 2^{N/4}e^{\omega+\lambda}\|u(t')\|_{L^{2}}.
				\end{aligned}
			\end{equation}
			Applying the estimates \eqref{eq-ineq-1} and  \eqref{est-norm} to the right-hand side components of 
			\eqref{duhamel_eq} implies that $u\in L^{\infty}(I;L^{\infty}(\R^{N}))$, which completes the proof.
		\end{proof}
		
		Now we shall look for $L^\infty$ estimates of bounded solutions of \eqref{main-eq}, assuming that $\lambda < \inf\sigma_{ess}(A)$. Let us recall that $\gamma>0$ be an arbitrary number such that \eqref{del-alph-zer} holds and $Q_{\pm}$, $Q_{+}^{d}$ are the projections defined in Section \ref{sec-2}.
		
		\begin{proposition}\label{prop-est-infty-1}
			There exists a constant $R_{Q,\infty} >0$, depending on the potential $V$ and the constant $M>0$ from condition \eqref{F-L-infty-bdd}, such that for any full solution $u:\R\to H^1(\R^N)$ of the equation \eqref{main-eq}, that is bounded in the space $H^{1}(\R^{N})$, the following estimate holds
			\begin{align*}
				\|Q_{+}u(t)\|_{L^{\infty}} \le R_{Q,\infty}, \quad t\in\R.
			\end{align*}
		\end{proposition}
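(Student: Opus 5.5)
The plan is to write $Q_+u(t)$ as a convolution integral over the whole past and then bound that integral in $L^\infty$ by duality, moving the semigroup onto the test function and using the $L^1$ decay of Proposition~\ref{prop-est-l1}. Let $u$ be a full solution bounded in $H^1(\R^N)$, say $\sup_{t}\|u(t)\|_{H^1}\le R$.

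First I would apply $Q_+$ to the Duhamel formula \eqref{duhamel_eq}. Since $Q_+$ is a spectral projection of $A$, it commutes with $S_{A-\lambda}$, so for $t'<t$
\begin{equation*}
Q_+u(t) = S_{A-\lambda}(t-t')Q_+u(t') + \int_{t'}^{t} S_{A-\lambda}(t-\tau)Q_+F(u(\tau))\,d\tau .
\end{equation*}
By \eqref{est-11aa}, $\|S_{A-\lambda}(t-t')Q_+u(t')\|_{L^2}\le e^{-\delta_+(t-t')}R$, and this tends to $0$ as $t'\to-\infty$. Each $F(u(\tau))$ lies in $L^\infty$ by \eqref{F-L-infty-bdd} and in $L^2$ because $F$ maps into $L^2$. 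The integrand is bounded in $L^2$ by $e^{-\delta_+(t-\tau)}\|F(u(\tau))\|_{L^2}$, which stays bounded in $\tau$ since $F$ is Lipschitz \eqref{F-lip-cond} and $u$ is $H^1$-bounded. Letting $t'\to-\infty$ therefore gives the representation in $L^2$:
\begin{equation*}
Q_+u(t)=\int_{-\infty}^{t} S_{A-\lambda}(t-\tau)Q_+F(u(\tau))\,d\tau .
\end{equation*}
This representation discards the homogeneous term, so its $L^\infty$ size never needs to be controlled.

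Next I would bound this integral in $L^\infty$ by duality. Take $\varphi\in C_c^\infty(\R^N)$, so $\varphi\in L^1\cap L^2$. Self-adjointness of $S_{A-\lambda}(s)$ and of $Q_+$ on $L^2$ gives
\begin{equation*}
\langle S_{A-\lambda}(t-\tau)Q_+F(u(\tau)),\varphi\rangle_{L^2}=\langle F(u(\tau)),Q_+S_{A-\lambda}(t-\tau)\varphi\rangle_{L^2}\le M\,\|Q_+S_{A-\lambda}(t-\tau)\varphi\|_{L^1}.
\end{equation*}
Proposition~\ref{prop-est-l1} bounds the last factor by $C_Qe^{-\delta_Q(t-\tau)}\|Q_+\varphi\|_{L^1}$. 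Proposition~\ref{prop-inclusion} with $p=1$ gives $\|Q_+\varphi\|_{L^1}\lesssim\|\varphi\|_{L^1}$ (for instance $Q_+=Q_+^d+Q_+^e$). Integrating over $\tau\in(-\infty,t)$, and passing the pairing under the integral using the $L^2$ convergence above, yields
\begin{equation*}
|\langle Q_+u(t),\varphi\rangle|\lesssim \frac{MC_Q}{\delta_Q}\|\varphi\|_{L^1}.
\end{equation*}
By density of $C_c^\infty$ in $L^1$ and $(L^1)^*=L^\infty$, this gives $\|Q_+u(t)\|_{L^\infty}\le R_{Q,\infty}:=C\,MC_Q/\delta_Q$. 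This constant depends only on $V$ (through $C_Q$, $\delta_Q$ and the projection constants) and on $M$, not on $R$, exactly as the statement requires.

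The main obstacle is justifying the full-past representation and the interchange of pairing and integral. The convergence is available only in $L^2$, while the decay that makes the bound uniform is only available in $L^1$ after duality. I would handle this by working throughout with $L^2$-valued Bochner integrals, pairing against $\varphi\in L^1\cap L^2$ only at the end. The dual bound involves just $M$ and the $L^1$ decay, so the $L^2$ bound $R$ is used only qualitatively to make the tail vanish.
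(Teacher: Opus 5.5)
Your proof is correct and is essentially the paper's argument. You apply $Q_+$ to the Duhamel formula, bound the forcing term in $L^\infty$ by duality using the $L^1$ decay of Proposition~\ref{prop-est-l1} and the $L^1$-boundedness of $Q_+$ from Proposition~\ref{prop-inclusion}, and remove the homogeneous term with the decay \eqref{est-11aa}. The only difference is that the paper keeps $t'$ finite and estimates $S_{A-\lambda}(t-t')Q_+u(t')$ directly in $L^\infty$ via the smoothing bound \eqref{est-3} before letting $t'\to-\infty$, whereas you pass to the full-past representation in $L^2$ first, so \eqref{est-3} is not needed in your version.
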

		\begin{proof}
			Let us take $t,t'\in\R$ such that $t>t'+1$.
			Acting on both sides of the formula \eqref{duhamel_eq} with the projection $Q_{+}$, we obtain
			\begin{align}\label{duhamel_eqa}
				Q_{+}u(t) = S_{A-\lambda} (t-t')Q_{+} u(t') + Q_{+}I(t,t').
			\end{align}
Then \eqref{est-3} and \eqref{est-11aa}, give 
			\begin{equation}
				\begin{aligned}\label{eq-ineq-111}
					\|S_{A-\lambda} (t-t')Q_{+}u(t')\|_{L^\infty} & = \|S_{A-\lambda}(1)S_{A-\lambda} (t-t'-1) Q_{+}u(t')\|_{L^\infty} \\
					&\le  Ce^{(\lambda + \omega)} \|S_{A-\lambda} (t-t'-1)Q_{+}u(t')\|_{L^2} \\
					&\le C e^{(\lambda + \omega)}e^{-\delta_{+}(t-t'-1)}\|Q_{+}u(t')\|_{L^2}.
				\end{aligned}
			\end{equation}
Let $\varphi\in C_c^{\infty}(\R^{N})$ be an arbitrary test function. Since $Q_{+}$ and $S_{A-\lambda} (t)$ for $t\ge 0$, are self-adjoint operators on $L^2(\R^N)$, we have
			$$
			\begin{aligned}\label{est-m1}
				\langle Q_{+}I(t,t'),\varphi\rangle_{L^{2}} & = \int_{t'}^{t} \langle Q_{+}S_{A-\lambda}(t-\tau)F(u(\tau)),\varphi\rangle_{L^{2}}\,d\tau \\
				& = \int_{t'}^{t} \langle S_{A-\lambda}(t-\tau)F(u(\tau)),Q_{+}\varphi \rangle_{L^{2}}\,d\tau \\
				& = \int_{t'}^{t} \langle F(u(\tau)),S_{A-\lambda}(t-\tau)Q_{+}\varphi \rangle_{L^{2}}\,d\tau.
			\end{aligned}
			$$
Hence, by applying Proposition \ref{prop-est-l1} and Proposition \ref{prop-inclusion} with $p=1$, we obtain
			\begin{equation*}
				\begin{aligned}
					\langle Q_{+}I(t,t'),\varphi\rangle_{L^2} & \le \int_{t'}^{t}\|F(u(\tau))\|_{L^\infty}\|S_{A-\lambda}(t-\tau)Q_{+}\varphi\|_{L^{1}}\,d\tau \\
					& \le M C_Q \int_{t'}^{t}e^{-\delta_{Q}(t-\tau)}\|Q_{+}\varphi\|_{L^{1}}\,d\tau \\
					& \le MC_Q \delta_{Q}^{-1}\|Q_{+}\|_{\mathcal{L}(L^{1},L^{1})}\left(1- e^{-\delta_{Q}(t-t')}\right)\|\varphi\|_{L^{1}}.
				\end{aligned}
			\end{equation*}
			Consequently, by the the duality argument, 
			\begin{align}\label{est-ineq-2}
				\|Q_{+}I(t,t')\|_{L^\infty} \le MC_Q\delta_{Q}^{-1}\|Q_{+}\|_{\mathcal{L}(L^{1},L^{1})}\left(1- e^{-\delta_{Q}(t-t')}\right).
			\end{align}
			Using \eqref{eq-ineq-111} and \eqref{est-ineq-2} together with \eqref{duhamel_eqa}, one gets
			\begin{align*}
				\|Q_{+}u(t)\|_{L^\infty} \leq C e^{(\lambda + \omega)} e^{-\delta_{+}(t-t'-1)}\|Q_{+}u(t')\|_{L^{2}} + MC_{Q}\delta_{Q}^{-1}\|Q_{+}\|_{\mathcal{L}(L^{1},L^{1})}\left(1- e^{-\delta_{Q}(t-t')}\right).
			\end{align*}
Letting $t' \to -\infty$ and using the boundedness of the solution $u$ in $L^2(\mathbb{R}^N)$, we conclude that
			\begin{align*}
				\|Q_{+}u(t)\|_{L^\infty} \le M C_{Q}\delta_{Q}^{-1}\|Q_{+}\|_{\mathcal{L}(L^{1},L^{1})} \ \text{ for all }  t\in\R,
			\end{align*}
			which completes the proof of the proposition.
		\end{proof}		
We will also study estimates for the projections of bounded solutions onto $X_-$ and $X_{+}^{d}$.
		\begin{proposition}\label{prop-est-infty-2}
			There exists $R_{d}>0$, depending only on the potential $V$ and the constant $M>0$ from condition \eqref{F-L-infty-bdd}, such that for any full solution $u:\R\to H^1(\R^N)$ of the equation \eqref{main-eq}, that is bounded in $H^{1}(\R^{N})$, the following estimates hold
			\begin{align}\label{eq-p-1}
				\|Q_{-}u(t)\|_{L^{2}} + \|Q_{-}u(t)\|_{L^{\infty}} & \le R_d,\\ \label{eq-p-2}
				\|Q_{+}^{d}u(t)\|_{L^{2}} + \|Q_{+}^{d}u(t)\|_{L^{\infty}} & \le R_d,
			\end{align}
			for all $t\in\R$.
		\end{proposition}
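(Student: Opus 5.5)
The plan is to exploit the finite dimensionality of $X_-$ and $X_+^d$. On these spaces the semigroup $S_{A-\lambda}$ acts as a group, and the mode $e_k$ evolves by the scalar factor $e^{-(\mu_k-\lambda)t}$. We integrate the Duhamel formula \eqref{duhamel_eq} forward in time for the $X_+^d$ component and backward in time for the $X_-$ component. The nonlinearity is controlled only in $L^\infty$, but it is paired against eigenfunctions, which lie in $L^1$ by the exponential decay \eqref{est-eigen}. This gives a bound on the coefficients that depends only on $M$ and the eigenfunctions of $A$. Finite dimensionality then converts a coefficient bound into both $L^2$ and $L^\infty$ bounds.

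\emph{The $X_+^d$ part.} Take an orthonormal basis $\{e_k\}$ of $X_+^d$ consisting of eigenfunctions, $Ae_k=\mu_k e_k$ with $\mu_k\in(\lambda,\lambda+\gamma)$, and set $a_k(t):=\langle u(t),e_k\rangle_{L^2}$. Apply $Q_+^d$ to \eqref{duhamel_eq} and pair with $e_k$, using self-adjointness. This gives
\begin{equation*}
a_k(t)=e^{-(\mu_k-\lambda)(t-t')}a_k(t')+\int_{t'}^{t}e^{-(\mu_k-\lambda)(t-\tau)}\langle F(u(\tau)),e_k\rangle_{L^2}\,d\tau .
\end{equation*}
Since $u$ is bounded in $H^1$, we have $|a_k(t')|\le \sup_s\|u(s)\|_{L^2}<\infty$. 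Letting $t'\to-\infty$ kills the first term because $\mu_k>\lambda$. Combining $|\langle F(u(\tau)),e_k\rangle|\le M\|e_k\|_{L^1}$ with \eqref{F-L-infty-bdd} and \eqref{est-eigen}, we obtain $|a_k(t)|\le M\|e_k\|_{L^1}/(\mu_k-\lambda)$.

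\emph{The $X_-$ part.} Here $\mu_k<\lambda$, so the same identity is rewritten as an expression for $a_k(t')$ in terms of $a_k(t)$, and we let $t\to+\infty$. The factor multiplying $a_k(t)$ is $e^{(\mu_k-\lambda)(t-t')}\to0$, and boundedness of $u$ again makes that term vanish. What remains is
\begin{equation*}
a_k(t')=-\int_{t'}^{\infty}e^{(\mu_k-\lambda)(\tau-t')}\langle F(u(\tau)),e_k\rangle\,d\tau ,
\end{equation*}
which gives $|a_k(t')|\le M\|e_k\|_{L^1}/(\lambda-\mu_k)$. Equivalently, one can use the group estimate \eqref{ine33} together with $\|Q_-\|_{\mathcal L(L^1,L^1)}$ from Proposition~\ref{prop-inclusion}.

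\emph{Conclusion.} From the coefficient bounds,
\begin{equation*}
\|Q_\pm u(t)\|_{L^p}\le \sum_k|a_k(t)|\,\|e_k\|_{L^p}, \qquad p\in\{2,\infty\},
\end{equation*}
where $Q_\pm$ stands for $Q_-$ or $Q_+^d$. Summing over $k$ yields a constant $R_d$ that depends only on $V$ (through the eigenfunctions and the gaps $|\mu_k-\lambda|$, and on the fixed choice of $\gamma$) and on $M$. In particular $R_d$ does not depend on the bound for $u$.

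The only delicate point is justifying the coefficient ODE, that is, the Duhamel identity projected onto $e_k$ with the limits $t'\to-\infty$ and $t\to+\infty$. This requires $\tau\mapsto\langle F(u(\tau)),e_k\rangle$ to be measurable and bounded, which follows from the continuity $F\circ u\in C(\R,L^2)$ given by \eqref{F-lip-cond}. It also requires that the boundary terms vanish, which uses only the uniform $L^2$ bound on $u$ and the strict spectral gap on each side of $\lambda$. I expect no real obstacle beyond bookkeeping.
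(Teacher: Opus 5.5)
Your proof is correct. For the $X_-$ component it is the paper's argument written in coordinates. The paper applies $Q_-$ to \eqref{duhamel_eq}, uses the backward group bound \eqref{ine33} together with $\|Q_-F(u(\tau))\|_{L^2}\lesssim\|F(u(\tau))\|_{L^\infty}\le M$ (Proposition~\ref{prop-inclusion} with $p=\infty$ plus finite dimensionality), and lets $t\to+\infty$, exactly as you do.

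For $X_+^d$ the routes differ. The paper does not integrate the coefficients forward from $-\infty$. It writes $\|Q_+^d u(t)\|_{L^2}+\|Q_+^du(t)\|_{L^\infty}\lesssim\|Q_+^dQ_+u(t)\|_{L^\infty}\lesssim\|Q_+u(t)\|_{L^\infty}\le R_{Q,\infty}$, quoting Proposition~\ref{prop-est-infty-1}. That proposition rests on a duality argument and on the $L^1$ decay of $Q_+S_{A-\lambda}(t)$ from Proposition~\ref{prop-est-l1}. The latter in turn needs the whole $L^1$ realization $\tilde A$ of the Schr\"odinger operator, with consistency of its resolvents, spectra and eigenspaces with those of $A$.

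Your coefficient argument avoids all of that machinery. It needs only $e_k\in L^1$, which follows from \eqref{est-eigen}, and the positive gap $\mu_k-\lambda$. So it makes \eqref{eq-p-2} independent of Section~3 and gives an explicit constant.

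The paper's route is economical in context. Proposition~\ref{prop-est-infty-1} is needed anyway to control the infinite-dimensional part $Q_+^e u$ in $L^\infty$; see \eqref{Q-infty-estimate} in the resonant case. There your expansion is unavailable, because $X_+^e$ carries the essential spectrum and has no finite eigenbasis, so $L^1$ control must come from the semigroup on $L^1$. Once that proposition is in hand, the $X_+^d$ bound is a one-line corollary.
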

		\begin{proof}
Since $X_{+}^{d}$ is finite-dimensional, combining Proposition \ref{prop-inclusion} with $p=\infty$ and Proposition \ref{prop-est-infty-1}, gives
			\begin{align*}
				\|Q_{+}^{d}u(t)\|_{L^{2}} + \|Q_{+}^{d}u(t)\|_{L^{\infty}} \lesssim \|Q_{+}^{d}u(t)\|_{L^{\infty}} = \|Q_{+}^{d}Q_{+}u(t)\|_{L^{\infty}} 
				 \lesssim \|Q_{+}u(t)\|_{L^{\infty}} \le R_{Q,\infty}
			\end{align*}
			for all $t \in \mathbb{R}$, which gives inequality \eqref{eq-p-2}. To obtain \eqref{eq-p-1}, we observe that the semigroup $\{S_A (t)\}_{t\geq 0}$ can be extended to a $C_0$-group on $X_{-}$. We then apply the operator $Q_{-}$ to the Duhamel formula \eqref{duhamel_eq} and arrive at
			\begin{align}\label{eq-s-1b}
				Q_{-}u(t') = S_{A-\lambda}(t'-t)Q_{-}u(t) - \int_{t'}^{t}S_{A-\lambda} (t'-\tau)Q_{-}F(u(\tau))\,d\tau, \quad t > t'.
			\end{align}
In view of the inequality \eqref{ine33}, one has
			\begin{equation}
				\begin{aligned}\label{eq-s-1c}
					\|Q_{-}u(t')\|_{L^{2}} & \le \|S_{A-\lambda}(t'-t)Q_{-}u(t)\|_{L^{2}} + \int_{t'}^{t}\|S_{A-\lambda}(t'-\tau)Q_{-}G(u(\tau),s)\|_{L^{2}}\,d\tau \\
					&\le C_- e^{(t'-t)\delta_{-}}\|Q_{-}u(t)\|_{L^{2}} + \int_{t'}^{t}C_- e^{(t'-\tau)\delta_{-}}\|Q_{-}F(u(\tau))\|_{L^{2}}\,d\tau.
				\end{aligned}
			\end{equation}
			Since $X_{-}$ is finite-dimensional, Proposition \ref{prop-inclusion} with $p=\infty$ yields
			\begin{equation}\label{ineq-f}
					\|Q_{-}F(u(\tau))\|_{L^{2}} \lesssim \|Q_{-}F(u(\tau))\|_{L^{\infty}} 
					\lesssim \|F(u(\tau))\|_{L^{\infty}} \le M, \quad \tau\in\R,
			\end{equation}
			where the last inequality follows from the assumption \eqref{F-L-infty-bdd}. Combining \eqref{eq-s-1c} and \eqref{ineq-f} then gives
			\begin{align*}
				\|Q_{-}u(t')\|_{L^{2}} & \lesssim e^{(t'-t)\delta_{-}}\|Q_{-}u(t)\|_{L^{2}} + M \int_{t'}^{t}e^{(t'-\tau)\delta_{-}}\,d\tau \\
				& \le e^{(t'-t)\delta_{-}}\|u(t)\|_{L^{2}} + M \delta_{-}^{-1}\left(1-e^{(t'-t)\delta_{-}}\right).
			\end{align*} 
Passing to the limit $t \to \infty$ in the above inequality and using the boundedness of $u$ in $H^1(\mathbb{R}^N)$, we deduce that
			\begin{align*}
				\|Q_{-}u(t')\|_{L^{2}} + \|Q_{-}u(t')\|_{L^{\infty}}\lesssim \|Q_{-}u(t')\|_{L^{2}} \lesssim M \delta_{-}^{-1} \lesssim 1,\quad t'\in\R,
			\end{align*}
which yields estimate~\eqref{eq-p-1} and completes the proof.
		\end{proof}

\section{\texorpdfstring{$H^1$ estimates for projections of $H^1$-bounded solutions}{H1 estimates of H1-bounded solutions}}
Consider equation \eqref{main-eq}, where the operator $A$ and the map $F$ are as in Section \ref{sec-3}. Suppose that $\lambda < \inf \sigma_{\mathrm{ess}}(A)$, and fix $\gamma > 0$ such that \eqref{del-alph-zer} holds. We assume that 
\begin{equation}\label{pseudo-lip-ineq}
\|F(u)\|_{L^2}\leq L_\sigma \left( \|P u\|_{L^2} + \|Q u\|_{L^2}\right) + C_\sigma  \quad  \mbox{ for all } \ \ u\in H^1(\R^N),
\end{equation}
where $L_\sigma>0$ and $C_\sigma>0$ are constants, and that 
		\begin{equation}\label{pseudo-lip-vs-spectrum}
			L_\sigma < \gamma.
		\end{equation} 
We begin by establishing $L^2$ estimates for projections of bounded solutions.
\begin{proposition}\label{prop-est-q}
There exists $R_e>0$, depending on the potential $V$ and the constants $M$, $L_\sigma$, $C_{\sigma}$ such that for any full solution $u$ of \eqref{main-eq} bounded in $H^{1}(\R^{N})$, we have
			\begin{align}\label{est-m-1}
				\|Q_{+}^{e}u(t)\|_{L^{2}} \le R_{e} \left(1+\sup_{\tau\in\R} \|Pu(\tau)\|_{L^2}\right) \ \text{ for all } \ t\in\R.
			\end{align}
		\end{proposition}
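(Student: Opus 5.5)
The plan is to project the Duhamel formula \eqref{duhamel_eq} onto $X_+^e$, use the exponential decay \eqref{ine11} of the semigroup there, and close the resulting estimate by an absorption argument. The absorption is possible because $L_\sigma<\gamma$ by \eqref{pseudo-lip-vs-spectrum}. I read $Q$ in \eqref{pseudo-lip-ineq} as the complementary projection $Q=I-P=Q_-+Q_+^d+Q_+^e$. The constant $M$ from \eqref{F-L-infty-bdd} is still in force, which is why $R_e$ is allowed to depend on it.

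\textbf{Step 1 (projected Duhamel formula).} Fix $t>t'$. Since $Q_+^e$ commutes with $S_{A-\lambda}$, applying $Q_+^e$ to \eqref{duhamel_eq} gives
\begin{equation*}
Q_+^e u(t) = S_{A-\lambda}(t-t')Q_+^e u(t') + \int_{t'}^{t} S_{A-\lambda}(t-\tau)Q_+^e F(u(\tau))\,d\tau .
\end{equation*}
By \eqref{ine11} (equivalently \eqref{rnon1}) and $\|Q_+^e\|_{\mathcal L(L^2,L^2)}=1$ we obtain
\begin{equation*}
\|Q_+^e u(t)\|_{L^2} \le e^{-\gamma(t-t')}\|u(t')\|_{L^2} + \int_{t'}^{t} e^{-\gamma(t-\tau)}\|F(u(\tau))\|_{L^2}\,d\tau .
\end{equation*}
Because $u$ is bounded in $H^1(\R^N)$, hence in $L^2(\R^N)$, we may let $t'\to-\infty$. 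This yields $\|Q_+^e u(t)\|_{L^2}\le \gamma^{-1}\sup_{\tau\in\R}\|F(u(\tau))\|_{L^2}$. The supremum is finite by \eqref{pseudo-lip-ineq} and the $H^1$-boundedness of $u$.

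\textbf{Step 2 (bound the $F$ term).} By \eqref{pseudo-lip-ineq} and the triangle inequality,
\begin{equation*}
\|F(u(\tau))\|_{L^2}\le L_\sigma\big(\|Pu(\tau)\|_{L^2}+\|Q_-u(\tau)\|_{L^2}+\|Q_+^d u(\tau)\|_{L^2}+\|Q_+^e u(\tau)\|_{L^2}\big)+C_\sigma .
\end{equation*}
The two finite-dimensional components are controlled uniformly by Proposition~\ref{prop-est-infty-2}, which gives $\|Q_-u(\tau)\|_{L^2}+\|Q_+^du(\tau)\|_{L^2}\le 2R_d$. Here $R_d$ depends only on $V$ and $M$.

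\textbf{Step 3 (absorption).} Set $S:=\sup_{t\in\R}\|Q_+^e u(t)\|_{L^2}$, which is finite since $u$ is bounded in $L^2$. Combining Steps 1 and 2 and taking the supremum over $t$ gives
\begin{equation*}
S\le \gamma^{-1}L_\sigma S+\gamma^{-1}\Big(L_\sigma\sup_{\tau\in\R}\|Pu(\tau)\|_{L^2}+2L_\sigma R_d+C_\sigma\Big).
\end{equation*}
Since $L_\sigma<\gamma$, we can absorb the first term on the right into the left-hand side:
\begin{equation*}
S\le (\gamma-L_\sigma)^{-1}\Big(L_\sigma\sup_{\tau\in\R}\|Pu(\tau)\|_{L^2}+2L_\sigma R_d+C_\sigma\Big).
\end{equation*}
This is \eqref{est-m-1} with $R_e:=(\gamma-L_\sigma)^{-1}\max\{L_\sigma,\,2L_\sigma R_d+C_\sigma\}$.

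The only delicate point is the absorption in Step 3. It needs $S<\infty$ a priori, which is exactly where the $H^1$-boundedness of the full solution enters. It also needs the $Q_-$ and $Q_+^d$ components to be bounded independently of $u$, which is supplied by the $L^\infty$-boundedness of $F$ through Proposition~\ref{prop-est-infty-2}. Everything else is the routine variation-of-constants estimate.
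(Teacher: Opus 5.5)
Your proof is correct and uses the same ingredients as the paper. You project \eqref{duhamel_eq} onto $X_+^e$, use the decay \eqref{ine11}, bound the $Q_-$ and $Q_+^d$ components uniformly via Proposition~\ref{prop-est-infty-2}, and exploit $L_\sigma<\gamma$.

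The difference is in how the estimate is closed. The paper keeps $t'$ finite and rewrites the projected inequality as the integral inequality \eqref{eq-q-3} for $\eta(t)=e^{\gamma t}\|Q_+^e u(t)\|_{L^2}$. It then uses the regularity \eqref{sol-reg} to differentiate $\eta$ wherever $Q_+^e u(t)\neq 0$. A continuation/contradiction argument on the set $I_0$, which is a pointwise Gronwall-type comparison, follows. Only then does it let $t'\to-\infty$, using \eqref{bdd-eta-prop}.

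You instead let $t'\to-\infty$ at once, which gives $\|Q_+^e u(t)\|_{L^2}\le\int_{-\infty}^{t}e^{-\gamma(t-\tau)}\|F(u(\tau))\|_{L^2}\,d\tau$. You then take the supremum over $t$ and absorb the term $\gamma^{-1}L_\sigma S$, using that $S<\infty$ a priori.

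Both arguments give the same bound: $(\gamma-L_\sigma)^{-1}$ times the sum of $L_\sigma\sup_{\tau}\|Pu(\tau)\|_{L^2}$, the $R_d$-terms and $C_\sigma$. Your route is shorter. It needs neither differentiability of $\eta$ nor the restriction to times with $Q_+^e u(t)\neq 0$. It only needs continuity of $\tau\mapsto\|F(u(\tau))\|_{L^2}$, which follows from \eqref{F-lip-cond}, together with the boundedness of $u$ that both proofs use anyway. For the statement as given, nothing is lost by replacing the paper's comparison argument with your sup-absorption.
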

		\begin{proof} 
			By Proposition \ref{prop-est-infty-2}, there exists $R_d > 0$ such that 
			$$
			\|Q_- u(t)\|_{L^2} + \|Q_{+}^{d} u(t)\|_{L^{2}} \leq R_d \ \mbox{ for all } \ t\in \R,
			$$
			for any bounded solution $u:\R\to H^1(\R^N)$ of \eqref{main-eq}. \\
			\indent Fix such a solution $u$ and apply $Q_+^{e}$ to both sides of the formula \eqref{duhamel_eq}. Using \eqref{ine11}, we obtain 
		\begin{equation}\label{eq-q-1}
			\|Q_{+}^{e} u(t)\|_{L^{2}}\le e^{-\gamma (t-t')}\|Q_{+}^{e}u(t')\|_{L^{2}} + \int_{t'}^{t}e^{-\gamma (t-\tau)}\|Q_{+}^{e}F(u(\tau))\|_{L^{2}}\,d\tau.
		\end{equation}
			In view of  \eqref{pseudo-lip-ineq}, for all $\tau \in\R$, 
			\begin{equation}
				\begin{aligned}\label{eq-q-a2}
					&\|Q_{+}^{e}F(u(\tau) )\|_{L^{2}} \leq  \|F(u(\tau))\|_{L^{2}} \le L_\sigma\left( \|Pu(\tau)\|_{L^{2}}+
					\|Q u(\tau)\|_{L^{2}}\right) + C_\sigma \\
					&\qquad \le  L_\sigma \|Q_{+}^{e}u(\tau)\|_{L^{2}} +  L_\sigma (\|Q_{-}u(\tau)\|_{L^{2}} + \|Q_{+}^{d}u(\tau)\|_{L^{2}} + \|Pu(\tau)\|_{L^2} ) + C_\sigma \\
					&\qquad \le  L_\sigma \|Q_{+}^{e}u(\tau)\|_{L^{2}} +  L_\sigma R_d + C_\sigma + L_\sigma \|Pu(\tau)\|_{L^2}.
				\end{aligned}
			\end{equation}
			Consequently,
			$$
			\|Q_{+}^{e}F(u(\tau) )\|_{L^{2}}  \leq L_\sigma \|Q_{+}^{e}u(\tau)\|_{L^{2}} + R_{\sigma},
			$$
			where 
			$$
			R_{\sigma} := L_\sigma R_d + C_\sigma + L_\sigma \sup_{\tau\in\R} \|Pu(\tau)\|_{L^2}. 
			$$
			Combining this with \eqref{eq-q-1}, gives
			$$
			e^{\gamma t} \|Q_{+}^{e}u(t)\|_{L^2} - e^{\gamma t'}\|Q_{+}^{e} u(t')\|_{L^2}\leq L_\sigma \int_{t'}^{t}
			e^{\gamma \tau} \|Q_{+}^{e}u(\tau)\|_{L^2} d\tau + R_{\sigma} \int_{t'}^{t} e^{\gamma\tau}d\tau.
			$$			
			Hence, for $\eta: \R\to \R$ defined by 
			$$
			\eta(t) := e^{\gamma t} \|Q_+^e u(t) \|_{L^2}, \ t\in\R,
			$$
			we have, for any $t,t'\in\R$ with $t>t'$,
			\begin{equation}
				\begin{aligned}\label{eq-q-3}
					\eta(t) - \eta(t') \le L_\sigma \int_{t'}^{t} \eta (\tau)\,d\tau + R_{\sigma} \int_{t'}^{t}e^{\gamma \tau}\,d\tau,\quad t'<t.
				\end{aligned}
			\end{equation}
By the regularity \eqref{sol-reg}, the function $\eta$ is continuously differentiable in a neighborhood of any point $t \in \mathbb{R}$ with $Q_+^e u(t) \neq 0$. Moreover, 
			\begin{equation}\label{bdd-eta-prop}
				\sup_{t \in\R} e^{-\gamma t} \eta(t) < +\infty,
			\end{equation}
			since $u$ is bounded in $H^1(\mathbb{R}^N)$. \\
			\indent We claim that 
			\begin{equation} \label{Qe-bounded-lemma}
				\eta(t)\leq R_{\sigma} e^{\gamma t}/(\gamma -L_\sigma) \ \ \mbox{ for all } \ \ t\in\R.
			\end{equation}
			Indeed, suppose to the contrary that there is $t_0\in \R$ such that
			\begin{equation}\label{eq-c-1}
				\eta(t_0) > R_{\sigma} e^{\gamma t_0}/(\gamma -L_\sigma).
			\end{equation}
			Consider the set 
			\begin{align*}
				I_0 := \left\{t\le t_{0} \ | \ \eta(t')\ge R_{\sigma} e^{\gamma t'}/(\gamma-L_\sigma) \ \text{ for all } \ t'\in [t,t_{0}]\right\}.
			\end{align*}
			We claim that $I_0 = (-\infty,t_{0}]$. Otherwise, let $\bar t := \inf I_0 >-\infty$. By continuity, there exists $\rho >0$ such that $Q_+^eu(t')\neq 0$ for $t'\in (\bar t-\rho,\bar t]$. Dividing both sides of \eqref{eq-q-3} by $t-t'>0$ and passing to the limit $t'\to t$ gives
			\begin{align*}
				\eta' (t) \le L_\sigma \eta(t)+ R_{\sigma} e^{\gamma t} , \quad t\in (\bar t-\rho, \bar t].
			\end{align*}
			Rewriting the above inequality as
			\begin{align*}
				(\eta(t) e^{- L_\sigma t} )' \le R_{\sigma} e^{(\gamma - L_\sigma)t}
			\end{align*}
			and integrating, we deduce that 
			\begin{align}\label{eq-11}
				e^{-L_\sigma t} \left( \eta (t) - R_{\sigma} e^{\gamma t}/(\gamma-L_\sigma) \right) \le e^{-L_\sigma t'} \left( \eta(t') -  R_{\sigma} e^{\gamma t'}/(\gamma-L_\sigma)\right)
			\end{align}
			for any $t,t'\in (\bar t -\rho, \bar t]$ with $t>t'$. 
			Since $I_0$ is closed, we have $\bar t\in I_0$ and we can use \eqref{eq-11} to obtain  
			\begin{align}
				0\le e^{-L_\sigma (\bar t-t')}\left(\eta( \bar t) - R_{\sigma} e^{\gamma \bar t}/(\gamma-L_\sigma) \right) 
				\le \eta(t') - R_{\sigma} e^{\gamma t'}/(\gamma-L_\sigma)
			\end{align}
			for $t'\in(\bar t-\rho,\bar t]$, which contradicts the definition of $\bar t=\inf I_0$. Hence $I_0 = (-\infty, t_0]$, as claimed. 
			Note that \eqref{eq-11} then holds for all $t,t'\in (-\infty,t_{0}]$ with $t>t'$. Consequently,
			\begin{align*}
				e^{(\gamma-L_\sigma)t}\left(e^{-\gamma t} \eta (t) - R_{\sigma}/(\gamma- L_\sigma) \right) \le e^{(\gamma-L_\sigma)t'} \left( e^{-\gamma t'}\eta(t') - R_{\sigma}/(\gamma-L_\sigma) \right).
			\end{align*}
			Taking into account \eqref{bdd-eta-prop} and passing to the limit as $t'\to-\infty$, we find that 
			\begin{align*}
				e^{(\gamma-L_\sigma)t}\left(e^{-\gamma t} \eta(t) - R_{\sigma}/(\gamma-L_\sigma) \right) \le 0,\quad t\le t_{0},
			\end{align*}
			which contradicts \eqref{eq-c-1} and completes the proof of \eqref{Qe-bounded-lemma}.\\
			\indent Now the assertion follows from \eqref{Qe-bounded-lemma} and the definitions of the function $\eta$ and the constant $R_{\sigma}$.
		\end{proof}		
\noindent We now use the $L^2$ estimates for projections of bounded solutions to derive the required $H^{1}$ estimates. 
		\begin{proposition} \label{Q-boundedness}
		There exists $R'_e>0$, depending on the potential $V$ and the constants $M$, $L_\sigma$, and $C_{\sigma}$, such that for any full solution $u$ of \eqref{main-eq} bounded in $H^{1}(\R^{N})$, the following estimate holds
\begin{align*}
\|Q_{+}^{e}u(t)\|_{H^{1}} \le R'_e \left( 1 + \sup_{\tau \in\R} \|P u(\tau )\|_{L^2}\right)\quad \mbox{ for all } t\in\R.
\end{align*}\end{proposition}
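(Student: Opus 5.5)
We upgrade the $L^2$ bound of Proposition \ref{prop-est-q} to an $H^1$ bound using the smoothing estimate \eqref{est-11bb} for the semigroup on $X_+$, applied to the Duhamel formula \eqref{duhamel_eq} with the past time $t'$ sent to $-\infty$. Throughout, fix a full solution $u$ bounded in $H^1(\R^N)$ and set $K:=\sup_{\tau\in\R}\|Pu(\tau)\|_{L^2}$, which is finite since $u$ is bounded.

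\emph{Step 1 (bound on the forcing).} Apply $Q_+^e$ to \eqref{duhamel_eq}. From \eqref{pseudo-lip-ineq}, Proposition \ref{prop-est-infty-2} and Proposition \ref{prop-est-q} we get, for all $\tau\in\R$,
\[
\|F(u(\tau))\|_{L^2}\le L_\sigma\big(R_e(1+K)+R_d+K\big)+C_\sigma\lesssim 1+K,
\]
where the implied constant depends only on $V,M,L_\sigma,C_\sigma$. This is exactly the computation \eqref{eq-q-a2}, now with $\|Q_+^eu\|_{L^2}$ already controlled. Since $\|Q_+^e\|_{\mathcal L(L^2,L^2)}=1$, the same bound holds for $\|Q_+^eF(u(\tau))\|_{L^2}$.

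\emph{Step 2 (Duhamel in $H^1$).} For $t>t'$ we have
\[
Q_+^e u(t)=S_{A-\lambda}(t-t')Q_+^eu(t')+\int_{t'}^t S_{A-\lambda}(t-\tau)Q_+^eF(u(\tau))\,d\tau .
\]
The operator $Q_+^e$ maps into $X_+^e\subset X_+$, so \eqref{est-11bb} applies to both terms. For the first term,
\[
\|S_{A-\lambda}(t-t')Q_+^eu(t')\|_{H^1}\le C_+(t-t')^{-1/2}e^{-\delta_+(t-t')}\|Q_+^eu(t')\|_{L^2}.
\]
By Proposition \ref{prop-est-q} the right-hand side tends to $0$ as $t'\to-\infty$. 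For the integral term,
\[
\Big\|\int_{t'}^t S_{A-\lambda}(t-\tau)Q_+^eF(u(\tau))\,d\tau\Big\|_{H^1}\le C_+(1+K)\int_0^\infty s^{-1/2}e^{-\delta_+ s}\,ds=C_+\Gamma(1/2)\,\delta_+^{-1/2}(1+K),
\]
and the right-hand side is independent of $t'$. Letting $t'\to-\infty$ yields $\|Q_+^eu(t)\|_{H^1}\le R'_e(1+K)$, which is the claim.

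\emph{Main obstacle.} The delicate point is justifying that the $H^1$ norm may be moved inside the Bochner integral in Step 2. The integrand is continuous into $H^1$ only for $\tau<t$, and it has an integrable singularity $(t-\tau)^{-1/2}$ at $\tau=t$. I would handle this in the standard way: the integral converges absolutely in $X^{1/2}=H^1(\R^N)$ because the singularity is integrable. Equivalently, one can integrate over $[t',t-\varepsilon]$ and let $\varepsilon\to0$, using closedness of $(A+\delta)^{1/2}$ together with the norm equivalence $X^{1/2}=H^1$ established in Section \ref{sec-2}. This is the same argument as in \cite[Lemma 3.3.2]{Henry}. Measurability of $\tau\mapsto F(u(\tau))$ in $L^2$ follows from \eqref{F-lip-cond} and the continuity of $u$ in $H^1$.
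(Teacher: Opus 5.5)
Your proposal is correct and takes essentially the same route as the paper. Both bound $\|Q_+^eF(u(\tau))\|_{L^2}$ by a constant times $1+\sup_{\tau}\|Pu(\tau)\|_{L^2}$ using \eqref{pseudo-lip-ineq} and Propositions \ref{prop-est-infty-2} and \ref{prop-est-q}, then apply the smoothing estimate \eqref{est-11bb} to the $Q_+^e$-projected Duhamel formula and let $t'\to-\infty$; your remark on convergence of the Bochner integral in $H^1$ only makes explicit a step the paper leaves implicit.
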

\begin{proof}
			Applying $Q_+$ to \eqref{duhamel_eq} and using \eqref{est-11bb}, we obtain
			\begin{equation}\label{ineq-11bb}
				\|Q_+^{e} u(t)\|_{H^{1}} \le C_+\frac{e^{-\delta_+ (t-t')}}{(t-t')^{1/2}} \|Q_+^{e} u(t')\|_{L^2}
				+ C_+ \int_{t'}^{t} \frac{e^{-\delta_{+}(t-\tau)}}{(t-\tau)^{1/2}} \|Q_+^{e} F(u(\tau))\|_{L^{2}} d\tau
			\end{equation}
			for all $t,t'\in\R$ such that $t>t'$. By \eqref{pseudo-lip-ineq} and \eqref{pseudo-lip-vs-spectrum}, for all $\tau\in\R$, we have
			\begin{equation}\label{eq-q-a2bbcc}
					\|Q_{+}^{e}F(u(\tau))\|_{L^{2}} \le \|F(u(\tau))\|_{L^{2}} 
						 \le L_\sigma \|Q u(\tau)\|_{L^{2}} + L_\sigma\| P u(\tau)\|_{L^2} + C_\sigma.
			\end{equation}
			Furthermore, in view of Propositions \ref{prop-est-infty-2} and \ref{prop-est-q}, there exists $R' >0$ such that 
			\begin{align}\label{eq-12-b}
				\|Q u(t)\|_{L^{2}} \le \|Q_{-}u(t)\|_{L^{2}} + \|Q^{d}_{+}u(t)\|_{L^{2}} + \|Q^{e}_{+}u(t)\|_{L^{2}}\le R' \left(1+\sup_{\tau\in\R} \|P u(\tau)\|_{L^{2}}\right)
			\end{align}
			for $t\in\R$. Combining this with \eqref{eq-q-a2bbcc}, gives $R''>0$ such that
			$$
			\|Q_{+}^{e}F(u(\tau))\|_{L^{2}} \leq R'' \left( 1+\sup_{s\in\R} \|Pu(s)\|_{L^2}\right) \ \mbox{ for all } \ \tau\in\R.
			$$
			Finally, since $u$ is bounded in $H^{1}(\R^{n} )$, passing to the limit $t' \to -\infty$ in \eqref{ineq-11bb} gives
			$$
			\|Q_+^{e} u(t)\|_{H^1} \leq C_+ R''  \left( 1 +\sup_{\tau\in\R} \|Pu(\tau)\|_{L^2} \right)\int_{-\infty}^{t}  \frac{e^{-\delta_+(t-\tau)}}{(t-\tau)^{1/2}} d\tau, \quad t\in\R,
			$$
			which completes the proof.
		\end{proof}
In summary, Propositions \ref{Q-boundedness} and \ref{prop-est-infty-2} yield the following.
		\begin{corollary}
					There exists $R'_e>0$, depending on the potential $V$ and the constants $M$, $L_\sigma$, $C_{\sigma}$ such that for any full solution $u$ of \eqref{main-eq} bounded in $H^{1}(\R^{N})$, the following estimate holds
			\begin{align}\label{eq-11-a}
				\|Q u(t)\|_{H^{1}} \le R_Q \left( 1 + \sup_{\tau \in\R} \|P u(\tau )\|_{L^2}\right)\quad \mbox{ for all } t\in\R.
			\end{align}
		\end{corollary}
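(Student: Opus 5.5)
The plan is to split $Q u(t)$ along the decomposition \eqref{eq-decomp}, writing $Q = Q_- + Q_+^d + Q_+^e$, where $Q$ denotes the projection onto $X_-\oplus X_+^d\oplus X_+^e$. We then estimate each of the three pieces in $H^1(\R^N)$ separately and add the bounds using the triangle inequality. The piece $Q_+^e u(t)$ is handled directly by Proposition \ref{Q-boundedness}. That proposition gives
\[
\|Q_+^e u(t)\|_{H^1}\le R'_e\Bigl(1+\sup_{\tau\in\R}\|Pu(\tau)\|_{L^2}\Bigr),
\]
with $R'_e$ depending only on $V$, $M$, $L_\sigma$, $C_\sigma$.

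For the finite-dimensional pieces, Proposition \ref{prop-est-infty-2} gives $\|Q_-u(t)\|_{L^2}+\|Q_+^d u(t)\|_{L^2}\le 2R_d$ for all $t\in\R$, with $R_d$ depending only on $V$ and $M$. To pass from $L^2$ to $H^1$, we use that $X_-$ and $X_+^d$ are finite-dimensional subspaces spanned by eigenfunctions of $A$. These eigenfunctions lie in $D(A)\subset H^1(\R^N)=X^{1/2}$, so on each of these spaces all norms are equivalent. In particular $\|w\|_{H^1}\lesssim\|w\|_{L^2}$ for $w\in X_-\oplus X_+^d$, with a constant depending only on $V$ (through the eigenfunctions, and on $\lambda,\gamma$, which are fixed). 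One can also see this concretely: for $w\in X_-$ we have $\|w\|_{1/2}^2=((A+\delta)w,w)_{L^2}\le(\lambda+\delta)\|w\|_{L^2}^2$ because $\sigma(A_{X_-})\subset(-\infty,\lambda)$. Similarly, for $w\in X_+^d$ we get $\|w\|_{1/2}^2\le(\lambda+\gamma+\delta)\|w\|_{L^2}^2$. Combined with the equivalence of $\|\cdot\|_{1/2}$ and the $H^1$ norm, this gives $\|Q_-u(t)\|_{H^1}+\|Q_+^du(t)\|_{H^1}\lesssim R_d$.

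Summing the three estimates gives
\[
\|Qu(t)\|_{H^1}\le C R_d + R'_e\Bigl(1+\sup_{\tau}\|Pu(\tau)\|_{L^2}\Bigr)\le R_Q\Bigl(1+\sup_{\tau}\|Pu(\tau)\|_{L^2}\Bigr)
\]
with $R_Q:=CR_d+R'_e$. None of these steps is a real obstacle. The only point needing care is to check that the constant in the norm equivalence on $X_-\oplus X_+^d$ is independent of the solution $u$, which holds since it depends only on the spectral data of $A$.
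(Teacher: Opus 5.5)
Your proof is correct and follows the paper's route: the paper obtains the corollary directly by combining Proposition~\ref{Q-boundedness} for the $Q_+^e$-component with Proposition~\ref{prop-est-infty-2} for the finite-dimensional components $Q_-u$ and $Q_+^d u$, on which $L^2$ and $H^1$ norms are equivalent. Your quadratic-form bound on $X_-\oplus X_+^d$ simply makes that norm equivalence explicit.
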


\section{Compactness properties of families of semiflows}
In this section, we investigate the compactness properties of the family of semiflows associated with the equation
\begin{equation}\label{A-Fn-eqn}
\dot u (t) = - A u(t)+\lambda u(t)+F_n (u(t)), \quad t>0,
\end{equation}
where $\lambda$ is a real number, $A =-\Delta+V$ is the Schr\"{o}dinger operator defined in Section \ref{sec-2} and $F_n:H^1(\R^N) \to L^2(\R^N)$, $n\ge 1$, is a family of continuous nonlinearities satisfying the following uniform condensing condition: there exists $k < \inf \sigma_{ess} (-\Delta +V)-\lambda$ such that
\begin{equation} \label{k-set-for-seq}
\beta_{L^2} \left( \bigcup_{n\geq 1} F_n(W) \right) \leq k \beta_{L^2}(W) \quad \text{ for  any bounded } \ W\subset H^1(\R^N).
\end{equation}
The following proposition provides an exponential estimate for the measure of noncompactness along solutions of equation \eqref{A-Fn-eqn}.
\begin{proposition}\label{prop-con-poinc}
Let $u_n:[t_0,t_1]\to H^1(\mathbb{R}^N)$, $n\ge 1$, be a solution of equation \eqref{A-Fn-eqn}, where the family $\{F_n\}_{n\ge 1}$ satisfies the condition \eqref{k-set-for-seq}. Suppose that there exists $R>0$ such that
\begin{equation*}
\|u_{n}(t)\|_{H^1} \leq R \ \text{ for all } \ t\in [t_0, t_1], \ n\ge 1.
\end{equation*}
Then, for every $t \in [t_0,t_1]$, we have
\begin{align}\label{ineq-b-meas}
\beta_{L^{2}}(\{u_{n}(t)\}_{n\ge 1}) \le e^{-(\gamma - k)(t-t_{0})}\beta_{L^{2}}(\{u_{n}(t_{0})\}_{n\ge 1}),
\end{align}
where $\gamma$ is any number such that $k<\gamma < \inf \sigma_{ess}(A)-\lambda$ and $\lambda+\gamma\not\in\sigma(A)$.
\end{proposition}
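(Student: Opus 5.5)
The plan is to pass to the Duhamel formula and estimate $\beta_{L^2}$ of each piece; the semigroup's decay on $X_+^e$ and the condensing bound on $F_n$ then feed a Gronwall inequality for $\phi(t):=\beta_{L^2}(\{u_n(t)\}_{n\ge1})$. For $t\in[t_0,t_1]$ and $n\ge1$ we write
\begin{equation*}
u_n(t)=S_{A-\lambda}(t-t_0)u_n(t_0)+\int_{t_0}^{t}S_{A-\lambda}(t-\tau)F_n(u_n(\tau))\,d\tau .
\end{equation*}
By Lemma \ref{lem-con-sem}, the first term contributes at most $e^{-\gamma(t-t_0)}\phi(t_0)$. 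The finite-dimensional part $(I-Q_+^e)$ of the integral term is bounded, because $\|F_n(u_n(\tau))\|_{L^2}$ is uniformly bounded (see below). Hence that part contributes nothing to $\beta_{L^2}$, and only $Q_+^e$ of the integral needs to be controlled.

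The main step, and the expected obstacle, is estimating $\beta_{L^2}$ of the family of integrals. First, I need a uniform bound on $\|F_n(u_n(\tau))\|_{L^2}$. This follows from \eqref{k-set-for-seq} applied to the bounded set $W=\{v:\|v\|_{H^1}\le R\}$, which forces $\bigcup_n F_n(W)$ to have finite measure of noncompactness. That is not literally boundedness, so I would additionally use, or take as implicit in the setting, that $F_n$ maps bounded sets into sets that are bounded uniformly in $n$. Next, I partition $[t_0,t]$ into small intervals $[\tau_{j-1},\tau_j]$ and use continuity in $\tau$. For each $j$, the map $\tau\mapsto F_n(u_n(\tau))$ is not equicontinuous in $n$. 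I would therefore use the standard inequality for measures of noncompactness of integrals (M\"onch/Heinz type, valid for countable families in a separable space):
\begin{equation*}
\beta\Big(\Big\{\int_{t_0}^{t} g_n(\tau)\,d\tau\Big\}_n\Big)\le 2\int_{t_0}^{t}\beta(\{g_n(\tau)\}_n)\,d\tau ,
\end{equation*}
with $g_n(\tau)=Q_+^eS_{A-\lambda}(t-\tau)F_n(u_n(\tau))$. The factor $2$ would spoil the sharp constant $\gamma-k$. To avoid that, I would instead first approximate by Riemann sums, which is possible since each $u_n$ is continuous into $H^1$ and $F_n$ is continuous. The price is that the sums are non-uniform in $n$. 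So the cleaner route is the following. For each $\tau$, $\beta_{L^2}(\{Q_+^eS_{A-\lambda}(t-\tau)F_n(u_n(\tau))\}_n)\le e^{-\gamma(t-\tau)}k\,\beta_{L^2}(\{u_n(\tau)\}_n)$, by \eqref{rnon1} and \eqref{k-set-for-seq}, where $W=\{u_n(\tau)\}_n$ is bounded in $H^1$. The constant $1$ (rather than $2$) in the integral inequality holds when using the Hausdorff measure in a Hilbert space together with the fact that, by $H^1$-boundedness and the smoothing estimate \eqref{est-11bb}, the integrands are equicontinuous in $\tau$ away from $\tau=t$. The intervals near $t$ contribute $O(\varepsilon)$ uniformly in $n$.

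The resulting inequality is
\begin{equation*}
\phi(t)\le e^{-\gamma(t-t_0)}\phi(t_0)+k\int_{t_0}^{t}e^{-\gamma(t-\tau)}\phi(\tau)\,d\tau .
\end{equation*}
Multiplying by $e^{\gamma t}$ and setting $\psi(t)=e^{\gamma t}\phi(t)$ gives $\psi(t)\le\psi(t_0)+k\int_{t_0}^t\psi$. Gronwall's lemma, with $\phi$ bounded by $2R$, then yields $\psi(t)\le e^{k(t-t_0)}\psi(t_0)$, which is exactly \eqref{ineq-b-meas}. If the sharp integral inequality proves unavailable, the fallback is to take $t$ close to $t_0$ and iterate on small subintervals. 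There the equicontinuity from $u_n\in C([t_0,t_1],H^1)$ and the uniform $H^1$ bound controls the error, and letting the mesh go to zero recovers the exponent $\gamma-k$.
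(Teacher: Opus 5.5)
Your skeleton is exactly the paper's: the Duhamel formula, Lemma \ref{lem-con-sem} for the linear term, \eqref{k-set-for-seq} applied to $W_\tau=\{u_n(\tau)\}_{n\ge1}$, and Gronwall for $e^{\gamma t}\phi(t)$. Your final Gronwall computation is also correct. The gap is in the one step that carries the content:
\[
\beta_{L^{2}}\Bigl(\Bigl\{\int_{t_0}^{t}S_{A-\lambda}(t-\tau)F_n(u_n(\tau))\,d\tau\Bigr\}_{n\ge1}\Bigr)\le\int_{t_0}^{t}\beta_{L^{2}}\bigl(\{S_{A-\lambda}(t-\tau)F_n(u_n(\tau))\}_{n\ge1}\bigr)\,d\tau .
\]
You justify the constant $1$ by claiming that $\tau\mapsto Q_+^eS_{A-\lambda}(t-\tau)F_n(u_n(\tau))$ is equicontinuous uniformly in $n$. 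Nothing in the hypotheses gives this. The $F_n$ are merely continuous, and continuity of each $u_n$ into $H^1(\R^N)$ carries no uniformity in $n$. Moreover, \eqref{est-11bb} is an $L^2\to H^1$ smoothing bound and says nothing about the time dependence of $F_n(u_n(\tau))$. Your fallback rests on the same unproved equicontinuity. As written, neither route yields the exponent $\gamma-k$. With a factor $2$ you would only get $\gamma-2k$, and iterating over subintervals does not improve that.

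The missing point is that your premise about the factor $2$ is wrong here. For a countable, integrably bounded family in a separable Banach space, the Heinz--M\"onch inequality holds with constant $1$. The factor $2$ is only needed in nonseparable spaces, where one passes to a separable subspace and $\beta$ can change by up to a factor $2$. This is Lemma \ref{integral-prop-of-measure}, which the paper simply invokes, and it needs no equicontinuity.

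A short proof runs as follows. Take finite-dimensional subspaces $E_1\subset E_2\subset\cdots$ with dense union in $L^2(\R^N)$. Then $\beta_{L^2}(\Omega)=\lim_m\sup_{x\in\Omega}\mathrm{dist}(x,E_m)$ for bounded $\Omega$. Each $\mathrm{dist}(\cdot,E_m)$ is a continuous seminorm, so $\mathrm{dist}(\int g_n,E_m)\le\int\sup_j\mathrm{dist}(g_j(\tau),E_m)\,d\tau$. Dominated convergence as $m\to\infty$ finishes the argument.

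Your worry about boundedness is also unnecessary. A set with finite Hausdorff measure of noncompactness is covered by finitely many balls, hence bounded. So \eqref{k-set-for-seq} with $W=\{\|v\|_{H^1}\le R\}$ already gives the uniform $L^2$ bound on $F_n(u_n(\tau))$ needed for integrable boundedness.

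If you prefer to keep your subinterval-iteration route, it can be repaired without that lemma, with two changes:
\begin{enumerate}
\item Place the equicontinuity on $\{u_n\}$ in $L^2$. This does follow from the Duhamel formula, the $H^1$ bound, and $\|(S_{A-\lambda}(h)-I)v\|_{L^2}\lesssim h^{1/2}\|v\|_{H^1}$.
\item Apply \eqref{k-set-for-seq} to the whole set $\{u_n(\tau)\mid n\ge1,\ \tau\in[s,s+h]\}$.
\end{enumerate}
This gives $\phi(s+h)\le e^{-\gamma h}\phi(s)+kh\bigl(\phi(s)+o(1)\bigr)$, and letting the mesh tend to zero recovers the exponent $\gamma-k$.
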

In the proof, we use the following general property of the measure of noncompactness (see \cite{MR1189795, ObuKaZ}).
\begin{lemma}\label{integral-prop-of-measure} 
			Let $E$ be a separable Banach space and let $B\subset L^1([a,b],E)$ be a countable and integrably bounded set, i.e., there exists $c\in L^1([a,b])$ such that $$\|w(t)\|\leq c(t), \quad w\in B \text{ and a.e. }t\in [a,b].$$ Define the function $\phi (t):= \beta(\{u(t)|\, u\in B \})$ for $t\in[a,b]$. Then $\phi\in L^1([a,b])$ and
			$$
			\beta \left(\left\{
			\int_{a}^{b} u(\tau) \d \tau\,|\, u \in B\right\} \right)
			\leq \int_{a}^{b} \phi(\tau) \d \tau.
			$$
		\end{lemma}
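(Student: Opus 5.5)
The plan is to exploit separability and countability to reduce everything to countably many measurable operations, and then to approximate the integrands by functions with values in a fixed finite set. Write $B=\{w_n\}_{n\ge1}$ and fix a dense sequence $(d_j)_{j\ge1}$ in $E$. Let $\mathcal F$ be the countable family of finite subsets of $\{d_j\}$.

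\emph{Step 1: measurability of $\phi$.} For any countable bounded $D=\{x_n\}\subset E$ I would check that
$$\beta(D)=\inf_{F\in\mathcal F}\sup_{n}\operatorname{dist}(x_n,F).$$
This holds because in the definition of $\beta$ the centres of the finitely many covering balls may be moved to nearby points $d_j$ at arbitrarily small cost. For each fixed $F$, the map $t\mapsto \operatorname{dist}(w_n(t),F)$ is measurable, since each $w_n$ is strongly measurable. Therefore $t\mapsto\sup_n\operatorname{dist}(w_n(t),F)$ is measurable, being a countable supremum, and $\phi$ is measurable, being a countable infimum. Integrability follows from $\phi(t)\le\sup_n\|w_n(t)\|\le c(t)$, where the first inequality comes from covering by a single ball centred at $0$.

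\emph{Step 2: finite-valued approximation.} Fix $\varepsilon>0$ and enumerate $\mathcal F=\{F_1,F_2,\dots\}$. Define $m(t)$ to be the first index $i$ with $\sup_n\operatorname{dist}(w_n(t),F_i)\le\phi(t)+\varepsilon$. This index exists by Step 1, and $t\mapsto m(t)$ is measurable because each defining condition is a measurable set. Let $T_K:=\{t: F_{m(t)}\subset\{d_1,\dots,d_K\}\}$. These sets increase to $[a,b]$, so absolute continuity of $\int c$ lets me choose $K$ with $\int_{[a,b]\setminus T_K}c\,d\tau<\varepsilon$. On $T_K$, let $g_n(t)$ be the point of $F_{m(t)}$ nearest to $w_n(t)$, choosing the smallest index in case of ties so that $g_n$ is measurable. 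Put $g_n=0$ off $T_K$. Then $\|w_n(t)-g_n(t)\|\le\phi(t)+\varepsilon$ on $T_K$, and $g_n$ takes values in the finite set $\{0,d_1,\dots,d_K\}$.

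\emph{Step 3: estimate.} I split
$$\int_a^b w_n=\int_{a}^{b} g_n+\int_{T_K}(w_n-g_n)+\int_{[a,b]\setminus T_K}w_n .$$
The first term equals $\sum_{i\le K}|\{t: g_n(t)=d_i\}|\,d_i$. It therefore ranges over a bounded subset of the finite-dimensional space $\operatorname{span}\{d_1,\dots,d_K\}$, so this set of integrals has $\beta=0$. The second term lies in the ball of radius $\int_a^b\phi+\varepsilon(b-a)$. The third term lies in the ball of radius $\varepsilon$. By subadditivity of $\beta$ under algebraic sums, and since $\beta$ of a ball of radius $r$ is at most $r$, I get
$$\beta\Big(\Big\{\int_a^b w_n\Big\}\Big)\le\int_a^b\phi+\varepsilon(b-a+1).$$
Letting $\varepsilon\to0$ gives the claim.

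The main obstacle is the measurability bookkeeping in Steps 1 and 2. Specifically, one must verify that the infimum representation of $\beta$ over countably many finite sets is exact for countable sets, and that the selections $m(t)$ and $g_n$ are genuinely measurable. The integral estimate itself is then routine.
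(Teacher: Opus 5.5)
The paper does not prove this lemma; it quotes it from the monographs cited just before it. Your argument is a correct, self-contained proof.

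The identity in Step~1 holds. In fact it holds for every bounded $D$; countability of $B$ is used only to make $t\mapsto\sup_n\operatorname{dist}(w_n(t),F)$ measurable. The selections $m$ and $g_n$ are measurable for the reasons you give. The three-term splitting in Step~3 gives the bound $\int_a^b\phi+\varepsilon(b-a+1)$, so letting $\varepsilon\to0$ yields the claim.

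One piece of null-set bookkeeping should be added. Since $B$ is countable, there is a single null set off which $\|w_n(t)\|\le c(t)<\infty$ for all $n$. You should define $\phi$, $m$ and $g_n$ off this set (say $g_n:=0$ on it). Correspondingly, the sets $T_K$ exhaust $[a,b]$ only up to this null set.

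For comparison, a shorter route replaces finite nets by the finite-dimensional subspaces $E_k:=\operatorname{span}\{d_1,\dots,d_k\}$.

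First, for every bounded $D\subset E$ one has $\beta(D)=\lim_{k\to\infty}\sup_{x\in D}\operatorname{dist}(x,E_k)$, and the sequence is nonincreasing in $k$. One inequality follows by approximating the centres of a finite $r$-net from some $E_k$. For the other, note that $D$ lies in a bounded subset of $E_k$ plus a ball of radius $\sup_{x\in D}\operatorname{dist}(x,E_k)+\delta$.

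Second, $\operatorname{dist}(\cdot,E_k)$ is the norm of the quotient map $E\to E/E_k$, and this map commutes with the Bochner integral. Hence
$\operatorname{dist}\bigl(\int_a^b w_n,E_k\bigr)\le\int_a^b\sup_m\operatorname{dist}(w_m(\tau),E_k)\,d\tau$.
Letting $k\to\infty$ and using dominated convergence (with domination by $c$) gives the claim. Measurability of $\phi$ then comes for free, as an a.e.\ limit of countable suprema.

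Your version stays closer to the definition of $\beta$ via finite nets and produces explicit finitely-valued approximants. The price is the measurable selection and the truncation $T_K$. The subspace version absorbs all of that measure-theoretic work into a single monotone limit.
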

		\begin{proof}[Proof of Proposition \ref{prop-con-poinc}]
			Applying the Hausdorff measure of noncompactness $\beta_{L^{2}}$ to the Duhamel formula, we obtain
			\begin{align*}
				\beta_{L^{2}}(\{u_{n}(t)\}_{n\ge 1}) & \le \beta_{L^{2}}(S_{A-\lambda}(t-t_{0})\{u_{n}(t_{0})\}_{n\ge 1}) + 
				\beta_{L^{2}}\left(\left\{\int_{t_{0}}^{t}S_{A-\lambda}(t-\tau)F_n(u_{n}(\tau)) d\tau \ | \ n\ge 1 \right\}\right) 
			\end{align*}
			for all $t\in [t_{0},  t_{1}]$. Then, using Lemmata \ref{lem-con-sem} and \ref{integral-prop-of-measure}, we obtain
			\begin{equation}
				\begin{aligned}\label{ineq-b-1}
					\beta_{L^{2}}(\{u_{n}(t)\}_{n\ge 1}) 
					 \le e^{-\gamma (t-t_{0})} \beta_{L^{2}}(\{u_{n}(t_{0})\}_{n\ge 1}) + \int_{t_{0}}^{t}e^{-\gamma(t-\tau)} \beta_{L^{2}}(\{F_n (u_{n}(\tau))\}_{n\ge 1})\,d\tau.
				\end{aligned}
			\end{equation}
Since the set $W_\tau := \{u_{n}(\tau) \ | \ n\ge 1\}$ is bounded for all $\tau\in[t_{0},t]$, the inequality \eqref{k-set-for-seq} implies
			\begin{align*}
				\beta_{L^{2}}(\{F_n(u_{n}(\tau)) \ | \ n\ge 1\}) \leq\beta_{L^{2}} \left(\bigcup_{n\geq 1} F_n(W_\tau)\right) \leq k \beta_{L^2} (W_\tau)=k\beta_{L^{2}}(\{u_{n}(\tau)\}_{n\ge 1}).
			\end{align*}
			Combining this with \eqref{ineq-b-1} gives
			\begin{equation*}
				\beta_{L^{2}}(\{u_{n}(t)\}_{n\ge 1}) \le e^{-\gamma (t-t_{0})}\beta_{L^{2}}(\{u_{n}(t_{0})\}_{n\ge 1}) + k \int_{t_{0}}^{t}e^{-\gamma (t-\tau)} \beta_{L^{2}}(\{u_{n}(\tau)\}_{n\ge 1})\,d\tau,
			\end{equation*}
			which, after an application of Gronwall's inequality, yields the desired inequality \eqref{ineq-b-meas}. 
		\end{proof}
		\begin{corollary} \label{cor-admissibility}
Assume that for each $n \ge 0$, the mapping $F_n$ satisfies the Lipschitz condition \eqref{F-lip-cond} with the same constant $L>0$. Suppose, moreover, that the family $\{F_n\}_{n \ge 1}$ satisfies \eqref{k-set-for-seq} with $k < \inf \sigma_{ess}(A)-\lambda$ and that
\[
F_n(u) \to F_0(u) \quad \text{in } \ L^2(\mathbb{R}^N), \ \ \text{as } \ n \to +\infty,
\]
for every $u \in H^1(\mathbb{R}^N)$. Let, for each $n \ge 1$, the mapping $u_n \colon [0,t_n] \to H^1(\mathbb{R}^N)$ be a solution of equation \eqref{A-Fn-eqn} with $t_n \to \infty$. If there exists $R > 0$ such that
\[
\|u_n(t)\|_{H^1} \le R \quad \text{for all } \ t \in [0, t_n] \ \text{ and } \ n \ge 1,
\]
then the sequence $\bigl(u_n(t_n)\bigr)$ contains a subsequence  converging in $H^1(\mathbb{R}^N)$.
		\end{corollary}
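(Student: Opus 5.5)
We prove the corollary in two steps: first relative compactness in $L^2(\R^N)$, obtained from Proposition~\ref{prop-con-poinc}, and then an upgrade to $H^1(\R^N)$, obtained from the smoothing of the semigroup and the uniform Lipschitz bound \eqref{F-lip-cond}.

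\textbf{Step 1: $L^2$ compactness.} Fix $s>0$. For $n$ large we have $t_n\ge s$, so $u_n$ is defined and bounded by $R$ in $H^1$ on $[t_n-s,t_n]$. Consider the time-shifted solutions $v_n(\tau):=u_n(t_n-s+\tau)$ on $[0,s]$. They solve \eqref{A-Fn-eqn} and satisfy the same bound. Since $\{v_n(0)\}$ is bounded in $L^2$, Proposition~\ref{prop-con-poinc} gives
\[
\beta_{L^2}(\{u_n(t_n)\}_{n\ge n_s})\le e^{-(\gamma-k)s}\,\beta_{L^2}(\{v_n(0)\})\le 2R\,e^{-(\gamma-k)s}.
\]
Removing finitely many elements does not change $\beta_{L^2}$, so the same bound holds for the whole sequence $\{u_n(t_n)\}_{n\ge1}$. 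Letting $s\to\infty$ gives $\beta_{L^2}(\{u_n(t_n)\})=0$. The same argument, with $s$ replaced by $s+1$, shows that $\{u_n(t_n-1)\}_{n\ge n_1}$ is relatively compact in $L^2$. More generally, for each fixed $\tau\in[0,1]$, the set $\{u_n(t_n-1+\tau)\}_n$ is relatively compact in $L^2$.

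\textbf{Step 2: upgrade to $H^1$.} Write the Duhamel formula on $[t_n-1,t_n]$:
\[
u_n(t_n)=S_{A-\lambda}(1)u_n(t_n-1)+\int_0^1 S_{A-\lambda}(1-\tau)F_n(w_n(\tau))\,d\tau,\qquad w_n(\tau):=u_n(t_n-1+\tau).
\]
The operator $S_{A-\lambda}(1)$ is bounded from $L^2$ to $X^{1/2}=H^1$, since $\|(A+\delta)^{1/2}S_A(1)\|<\infty$ for the analytic semigroup. Hence the first term is relatively compact in $H^1$, because it is the image of an $L^2$-relatively compact set. For the integral term, pass to a diagonal subsequence along which $w_n(\tau)$ converges in $L^2$ for all rational $\tau\in[0,1]$.

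\textbf{Main obstacle.} The integral term is where the difficulty lies. We need $F_n(w_n(\tau))$ to converge in $L^2$, but the Lipschitz condition \eqref{F-lip-cond} is stated in the $H^1$ norm, whereas we only control $L^2$ convergence. My first attempt is to obtain $H^1$ convergence of $w_n(\tau)$ for $\tau>0$ by a bootstrap. Applying the smoothing argument above on $[t_n-2,t_n-1+\tau]$ shows that the linear part converges in $H^1$. Call the integral part $J_n$. Then for $m,n$ large,
\[
\|J_n-J_m\|_{H^1}\lesssim\int (\tau-\sigma)^{-1/2}\Bigl(L\|w_n-w_m\|_{H^1}+\|F_n(w_m)-F_m(w_m)\|_{L^2}\Bigr)\,d\sigma.
\]
Here $w_m$ denotes the time-shifted solution evaluated at time $\sigma$. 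I estimate the last term by pointwise convergence $F_n\to F_0$, made uniform on the compact set $\overline{\{w_m(\sigma)\}}$. This is valid because the family $\{F_n\}$ is equi-Lipschitz and converges pointwise, hence converges uniformly on compact sets. The singular Gronwall inequality (Henry's lemma) then forces $\sup_\tau\|w_n(\tau)-w_m(\tau)\|_{H^1}\to0$, provided the starting $H^1$-oscillation is small. If the Gronwall route stalls, my fallback is to use Lemma~\ref{integral-prop-of-measure} in $H^1$, replacing $\beta_{L^2}$ by $\beta_{H^1}$, together with the estimate $\|S_{A-\lambda}(1-\tau)\|_{\mathcal L(L^2,H^1)}\lesssim(1-\tau)^{-1/2}$. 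The required bound is $\beta_{L^2}(\{F_n(w_n(\tau))\})\le k\,\beta_{L^2}(\{w_n(\tau)\})=0$ by \eqref{k-set-for-seq}, since we know from Step 1 that $\beta_{L^2}(\{w_n(\tau)\})=0$. This gives $\beta_{H^1}(\text{integral})\le\int_0^1(1-\tau)^{-1/2}\cdot 0\,d\tau=0$ directly. This second route is cleaner, and I would try it first. It only requires that $S_{A-\lambda}(1-\tau)$ maps $L^2$-relatively compact sets into $H^1$ with constant $(1-\tau)^{-1/2}$, and that the integrand is integrably bounded by $C(1-\tau)^{-1/2}(L R+\|F_n(0)\|_{L^2})$. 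The quantity $\|F_n(0)\|_{L^2}$ is bounded in $n$ because $F_n(0)\to F_0(0)$. Combining the two terms, $\beta_{H^1}(\{u_n(t_n)\})=0$, which yields an $H^1$-convergent subsequence.
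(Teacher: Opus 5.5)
Your proposal is correct. Your second route, the one you say you would try first, is a complete proof, and it reaches $H^1$-compactness by a different mechanism than the paper.

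Your Step 1 coincides with the paper's: Proposition~\ref{prop-con-poinc} applied along time-shifted solutions. The two arguments part ways at the upgrade from $L^2$ to $H^1$.

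The paper extracts an $L^2$-limit $\bar v$ of $u_n(t_n-1)$ and then invokes the external continuous-dependence result \cite[Th.~3.2]{Cw-Luk-2021}. That result says that solutions of the $F_n$-equations which are $H^1$-bounded on a unit interval and start from $L^2$-convergent data converge in $H^1$ at the end time. The limit is the solution of the equation with $F_0$ starting at $\bar v$. This is where the hypotheses $F_n\to F_0$ and the common Lipschitz constant do their work.

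You instead run the noncompactness argument a second time, now in $H^1$:
\begin{itemize}
\item From $\beta_{L^2}(\{u_n(t_n-1+\tau)\}_n)=0$ for every $\tau\in[0,1]$, condition \eqref{k-set-for-seq} gives $\beta_{L^2}(\{F_n(w_n(\tau))\}_n)=0$.
\item The smoothing bound $\|S_{A-\lambda}(t)\|_{\mathcal L(L^2,H^1)}\lesssim t^{-1/2}$ carries this over to $\beta_{H^1}$ of the integrand.
\item Lemma~\ref{integral-prop-of-measure}, applied in the separable space $H^1(\R^N)$ with the integrable majorant $C(1-\tau)^{-1/2}$, then disposes of the Duhamel integral.
\end{itemize}

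What each approach buys:
\begin{itemize}
\item Yours stays entirely within the paper's own toolkit plus standard analytic-semigroup smoothing, and needs no external continuity theorem.
\item Yours also shows that the convergence $F_n\to F_0$ is essentially irrelevant for compactness. You use it only to get $\sup_n\|F_n(0)\|_{L^2}<\infty$.
\item The paper's route, in exchange, identifies the limit of $u_n(t_n)$ as the value of a solution of the limiting equation.
\end{itemize}

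You can drop the Gronwall sketch. As written it has two problems. First, it needs uniform convergence of $F_n$ on a set that is only known to be $L^2$-compact, and the $H^1$-Lipschitz hypothesis does not give this. Second, it assumes $H^1$-smallness of the initial differences, which you do not have. Repairing it would mean keeping the $L^2$ initial difference with the weight $t^{-1/2}$ in a singular Gronwall inequality and comparing with the limit solution, which is essentially what the cited continuity theorem does.
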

		\begin{proof}
    			Let $\tilde{t}>0$ be arbitrary and choose $n_{0}\ge 1$ such that $t_{n}>\tilde{t}+1$ for all $n\ge n_{0}$. For each $n\geq n_{0}$, define $v_n:[0,\tilde{t}+1]\to H^1(\mathbb{R}^N)$ by $v_n(t):=u_n(t+t_n-\tilde{t}-1)$ for $t\in [0,\tilde{t}+1]$. Then $v_{n}$ is a solution of~\eqref{A-Fn-eqn}. Choose $\gamma\in\mathbb{R}$ such that $k<\gamma<\inf\sigma_{ess}(A)-\lambda$ and $\lambda+\gamma\not\in\sigma(A)$.
            By Proposition \ref{prop-con-poinc}, we obtain the  estimate
\begin{align*}
\beta_{L^{2}}(\{u_{n}(t_{n}-1)\}_{n\ge 1}) & = \beta_{L^{2}} ( \{ u_n (t_{n}-1) \}_{n\ge n_{0}} ) = \beta_{L^{2}} ( \{v_n (\tilde t) \}_{n\ge n_{0}} ) \\
				&  \le e^{-(\gamma - k)\tilde t}\beta_{L^{2}}(\{v_{n}(0)\}_{n\ge n_{0}}) \leq  Re^{-(\gamma - k)\tilde t},
			\end{align*}
			where the last inequality follows from  $\|v_n(0)\|_{H^1}=\|u_n(t_n-\tilde t-1)\|_{H^1}\leq R$ for all $n\geq 1$. Since $\tilde t>0$ is arbitrary, we have
			$$
			\beta_{L^{2}}(\{u_{n}(t_n-1)\}_{n\ge 1})\leq R e^{-(\gamma - k)\tilde t} \to 0 \quad  \text{ as  } \ \ \tilde t \to +\infty,
			$$
			which shows that $\{ u_n (t_n-1) \}_{n\geq 1}$ is relatively compact in $L^2(\R^N)$.\\
			\indent Without loss of generality, we may assume that $u_n(t_n-1)=v_n(\tilde t)\to \bar v$ in $L^2(\R^N)$ for some $\bar v\in H^1(\R^N)$. Taking into account the bound $\| v_n (t) \|_{H^1}\leq R$ for all $t\in [\tilde t, \tilde t +1]$ and $n\geq 1$, 
and using the continuity properties of evolution equations (see \cite[Th.~3.2]{Cw-Luk-2021}), we conclude that $u_n(t_n) = v_n(\tilde t +1) \to v (\tilde t +1)$ in $H^1(\R^N)$, as $n\to +\infty$, where $v:[\tilde t,\tilde t+1]\to H^1(\R^N)$ is the solution of equation $\dot v(t) = - A v(t)+\lambda v(t)+F_0(v(t))$ with initial condition $v(\tilde t) = \bar v$.
		\end{proof}	
The following proposition provides continuity properties of a family of Nemytskii operators and establishes an estimate for their measure of noncompactness in $L^2(\R^N)$.
\begin{proposition}\label{prop-cond-f-n}
Assume the mappings $f_{n}:\R^N\times \R\to\R$, $n\ge 0$, satisfy condition $(f1)$ with a common function $c\in L^{2}(\R^{N}$ and condition $(f2)$ with the same Lipschitz constant $l\in L^{\infty}(\R^{N})$. Suppose that 
\begin{equation}\label{conv-fn-1}
f_{n}(x,u)\to f_{0}(x,u) \quad \text{ as } \ n\to \infty 
\end{equation}
for a.e. $x\in\R^{N}$ and $u\in\R$. If $F_n: H^1(\R^N)\to L^2(\R^N)$ is the Nemytskii operator determined by $f_{n}$, then, for any $u\in H^{1}(\R^{N})$, 
\begin{align}\label{conv-fn}
F_{n}(u)\to F_{0}(u) \quad \text{in } L^{2}(\mathbb{R}^{N}) \text{ as } n\to\infty.
\end{align}
Moreover, for any bounded set $W\subset H^1(\R^N)$, the set $\bigcup_{n\geq 1}F_{n}(W)$ is bounded in $L^2(\R^N)$ and 
\begin{align}\label{est-eq-non-2}   
\beta_{L^{2}}\left(\bigcup_{n\geq 1}F_n(W)\right) \le \hat \varrho (l) \beta_{L^{2}}(W).
\end{align} 
\end{proposition}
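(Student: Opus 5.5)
Three parts: pointwise convergence, boundedness, and the measure of noncompactness estimate.

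\emph{Convergence \eqref{conv-fn}.} Fix $u\in H^1(\R^N)$. By \eqref{conv-fn-1}, which holds for a.e.\ $x$ and every $u\in\R$ (take the common null set over all real arguments), $f_n(x,u(x))\to f_0(x,u(x))$ for a.e.\ $x$. To be safe about the null set depending on the real argument, I would first restrict to rational arguments, where countability handles it, and then extend using the uniform Lipschitz bound $l$. Conditions $(f1)$--$(f2)$ give the domination $|f_n(x,u(x))|\le |c(x)|+\|l\|_{L^\infty}|u(x)|$, whose right-hand side is in $L^2$, and likewise for $f_0$. Dominated convergence then yields $F_n(u)\to F_0(u)$ in $L^2$. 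The same bound gives $\|F_n(u)\|_{L^2}\le \|c\|_{L^2}+\|l\|_{L^\infty}\|u\|_{L^2}$ uniformly in $n$, hence the boundedness of $\bigcup_n F_n(W)$.

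\emph{Estimate \eqref{est-eq-non-2}.} Fix $\varepsilon>0$ and choose $R$ with $\esssup_{|x|\ge R} l\le \hat\varrho(l)+\varepsilon$. Split each element as $F_n(u)=\chi_{B_R}F_n(u)+\chi_{\{|x|\ge R\}}F_n(u)$. I would first show that $\bigcup_n \chi_{B_R}F_n(W)$ is relatively compact in $L^2(\R^N)$; this is the main obstacle. Restriction gives that $\chi_{B_R}W$ is relatively compact in $L^2(B_R)$ by Rellich--Kondrachov, since $W$ is bounded in $H^1$. Given a finite $\eta$-net $\{w_j\}$ of $\chi_{B_R}W$ in $L^2(B_R)$, the Lipschitz bound gives
$$\|\chi_{B_R}(F_n(u)-F_n(w_j))\|_{L^2}\le\|l\|_\infty\eta,$$
uniformly in $n$. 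So it suffices that, for each fixed $w_j$, the set $\{\chi_{B_R}F_n(w_j)\}_n$ is relatively compact. This follows from the first part: the sequence converges in $L^2$ to $F_0(w_j)$, and a convergent sequence is relatively compact. Thus $\bigcup_n\chi_{B_R}F_n(W)$ is totally bounded. Here $F_n(w_j)$ should be interpreted via the extension of $w_j$ by zero, which lies in $L^2$, and Nemytskii evaluation makes sense on $L^2$.

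\emph{Outer part and conclusion.} For the outer part, take a finite $r$-net $\{v_i\}\subset L^2$ of $W$ with $r>\beta_{L^2}(W)$. For $u\in W$ with $\|u-v_i\|\le r$,
$$\|\chi_{\{|x|\ge R\}}(F_n(u)-F_n(v_i))\|_{L^2}\le(\hat\varrho(l)+\varepsilon)r.$$
For each $i$, the set $\{\chi_{\{|x|\ge R\}}F_n(v_i)\}_n$ is again relatively compact, since it is convergent by the first part applied in $L^2$; dominated convergence works for $L^2$ functions just as for $H^1$ ones. Hence $\beta\bigl(\bigcup_n\chi_{\{|x|\ge R\}}F_n(W)\bigr)\le(\hat\varrho(l)+\varepsilon)r$. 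By subadditivity of $\beta_{L^2}$, with the compact inner part contributing zero, letting $r\downarrow\beta_{L^2}(W)$ and $\varepsilon\to0$ gives \eqref{est-eq-non-2}.
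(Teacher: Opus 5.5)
Your proof is correct and follows the paper's strategy: dominated convergence for \eqref{conv-fn}, a split of $F_n(u)$ on $B(0,R)$ and its complement with $R$ chosen so that $l\le\hat\varrho(l)+\varepsilon$ outside the ball, Rellich--Kondrachov plus the uniform Lipschitz bound for compactness of the inner part, and the Lipschitz bound against a finite $r$-net of $W$ for the outer part. The differences are organizational and slightly cleaner: you get inner compactness from a finite net and the convergent sequences $(F_n(w_j))_n$ rather than by extracting subsequences with the case split ($n_k$ constant or $n_k\to\infty$), and you handle all $n\ge 1$ at once via relative compactness of $\{F_n(v_i)\}_{n\ge 1}$ rather than the paper's cutoff $n_0$ followed by separate treatment of $F_1,\dots,F_{n_0-1}$; you also make explicit two points the paper passes over, namely the null set in \eqref{conv-fn-1} (handled via rational arguments) and the validity of the convergence for net centers lying only in $L^2$.
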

\begin{proof}
By assumption \eqref{conv-fn-1}, for every $u\in H^{1}(\R^{N})$ the convergence \eqref{conv-fn} holds pointwise in $\R^{N}$. Moreover, $f_n(x,0)=c(x)$ for a.e. $x\in\R^{N}$, which together with condition $(f2)$ implies that
\[
|[F_n(u)](x)|=|f_n(x,u(x))|\le c(x) + l(x)|u(x)|
\]
for a.e. $x\in\R^{N}$ and all $n\ge1$. Since $c + l|u|$ belongs to $L^{2}(\R^{N})$, the dominated convergence theorem implies \eqref{conv-fn}. Observe that for any $n\ge 1$, the mapping $F_{n}$ satisfies the Lipschitz condition 
\begin{align*}
\|F_{n}(u) - F_{n}(v)\|_{L^{2}} \le L\|u-v\|_{L^{2}}, \quad u,v\in H^{1}(\R^{N}),
\end{align*}
where $L:=\|l\|_{L^{\infty}}$. Let $M_W>0$ be such that $W\subset B_{H^{1}}(0,M_W) := \{u\in H^{1}(\R^{N}) \ | \ \|u\|_{H^{1}}< M_{W}\}$. Since $F_{n}(0) = c$, for any $u\in W$, we have
$$
\|F_{n}(u)\|_{L^2} \leq \|c\|_{L^{2}}+L\|u\|_{H^1} \leq \|c\|_{L^{2}}+L M_W, \quad n\ge 1,
$$
which shows that the set $\bigcup_{n\geq 1}F_{n}(W)$ is bounded in $L^{2}(\R^{N})$. To prove the inequality \eqref{est-eq-non-2}, we first show that for every $\ve >0$ there exists $n_{0} \ge 1$ such that 
\begin{equation}\label{ineq-a-1}
\beta_{L^{2}}\left(\bigcup_{n\geq n_{0}}F_n(W)\right) \le \tilde r_\ve := (\hat \varrho(l)+\ve)(\beta_{L^2}(W) +\ve) + \ve.
\end{equation}
Indeed, let us choose $R>0$ such that 
\begin{equation}\label{eq-epsilon-1}
	\hat\varrho(l) + \ve \ge l (x) \quad\text{for a.e. }\ \ |x|\ge R.
\end{equation}
Consider the decompositions $F_{n} = F_{n,1} + F_{n,2}$ for $n\ge 1$, where 
\begin{align*}
[F_{n,1} (u)](x) := \chi_{B(0,R)}(x)f_n (x,u(x)) \quad \text{and} \quad [F_{n,2}(u)](x) := (1-\chi_{B(0,R)}(x))f_n (x,u(x)) 
\end{align*}
for a.e. $x\in\R^N$ and all $u\in H^1(\R^N)$. We claim that $\bigcup_{n\geq 1}F_{n,1}(W)$ is a relatively compact subset of $L^{2}(\mathbb{R}^{N})$. Indeed, let $(g_k)$ be an arbitrary sequence such that $g_{k} = F_{n_{k},1}(u_{k})$ for some $n_{k}\ge 1$ and $u_{k}\in W$. Since the sequence $(u_k)$ is bounded in $H^1(\mathbb{R}^{N})$, the Banach--Alaoglu theorem implies that, up to a subsequence, $u_k \rightharpoonup u_0$ weakly in $H^1(\mathbb{R}^N)$. Now, consider the restrictions $\tilde u_k$ of $u_k$ to the ball $B(0,R)$. These restrictions form a bounded sequence in $H^1(B(0,R))$, which is compactly embedded in $L^2(B(0,R))$ by the Rellich--Kondrachov theorem.
Therefore, possibly passing to a further subsequence, we may assume that $(\tilde u_k)$ converges strongly in $L^2(B(0,R))$ to $\tilde u_0$, the restriction of $u_0$ to $B(0,R)$. Passing to a further subsequence if necessary, we may also assume that the sequence $(n_k)$ is either constant or $n_k \to \infty$ as $k \to \infty$. In the former case, we write $\tilde n:= n_{k}$ for $k\ge 1$ and 
\begin{align*}
\|g_{k} - F_{\tilde n, 1}(u_0) \|_{L^{2}(\R^{N})} = \|F_{\tilde n,1}(u_k) - F_{\tilde n,1}(u_0) \|_{L^{2}(\R^{N})} 
 \le \|l\|_{L^\infty} \| \tilde u_{k} - \tilde u_0 \|_{L^2(B(0,R))}.
\end{align*}
Hence $g_{k} \to F_{\tilde n,1}(u_0)$ as $k\to +\infty$ in $L^2(\R^N)$. In the latter case, where $n_k \to \infty$ as $k\to\infty$, we have
\begin{align*}
\|g_{k}-F_{0,1}(u_{0})\|_{L^2(\R^{N})} & = \|F_{n_{k},1} (u_{k})-F_{0,1}(u_{0})\|_{L^2(\R^{N})} \\
& \le \|F_{n_{k},1} (u_{k})-F_{n_{k},1}(u_{0})\|_{L^2(\R^{N})} + \|F_{n_{k},1} (u_{0})-F_{0,1}(u_{0})\|_{L^2(\R^{N})} \\
& \le \|l\|_{L^{\infty}}\|\tilde u_{k}- \tilde u_{0}\|_{L^2(B(0,R))} + \|F_{n_{k}} (u_{0})-F_{0}(u_{0})\|_{L^2(\R^{N})}.
\end{align*}
This, together with \eqref{conv-fn}, implies that $g_{k} \to F_{0,1}(u_{0})$ in $L^{2}(\R^{N})$ as $k\to \infty$. Consequently, the set $\bigcup_{n\geq 1}F_{n,1}(W)$ is relatively compact in $L^{2}(\R^{N})$. Let us now take a finite covering of $W$ by the balls $B_{L^2}(u_i, r_\ve)$ for $i=1,\ldots, k_\ve$, where 
$r_\ve := \beta_{L^2}(W) +\ve$. By \eqref{conv-fn}, we choose $n_{0} \ge 1$ such that
\begin{align*}
\|F_{n}(u_{i})-F_{0}(u_{i})\|_{L^2} \le \ve, \quad n\ge n_{0}, \ 1\le i \le k_{\ve}.
\end{align*}
We now check that the set $\bigcup_{n \ge n_{0}} F_{n,2}(W)$ can be covered by the balls $B_{L^2}(F_{0}(u_{i}), \tilde r_{\ve})$, $1\le i \le k_{\ve}$. Let $y = F_{n,2}(u)$ for some $n\ge n_{0}$ and $u\in W$. Then $u\in B_{L^2}(u_i, r_\ve)$ for some $1\le i \le k_\ve$. By condition $(f2)$ and inequality \eqref{eq-epsilon-1}, we have $$\|F_{n,2} (u)-F_{n,2}(u_{i})\|_{L^2} \leq (\hat \varrho(l)+\ve) \|u-u_{i}\|_{L^2}, \quad u \in H^1(\R^N),$$ which implies that 
\begin{align*}
\|y-F_{0,2}(u_{i})\|_{L^2} & = \|F_{n,2} (u)-F_{0,2}(u_{i})\|_{L^2} \\
& \le \|F_{n,2} (u)-F_{n,2}(u_{i})\|_{L^2} + \|F_{n,2} (u_{i})-F_{0,2}(u_{i})\|_{L^2} \\
& \le (\hat \varrho(l)+\ve)\|u-u_{i}\|_{L^2} + \|F_n (u_{i})-F_0(u_{i})\|_{L^2} \\
& \le (\hat \varrho(l)+\ve)r_{\ve} + \ve = \tilde r_{\ve}.
\end{align*}
Consequently, 
\begin{align*}
\beta_{L^{2}}\left(\bigcup_{n\geq n_{0}}F_n(W)\right) \le \beta_{L^{2}}\left(\bigcup_{n\geq 1}F_{n,1}(W)\right) + \beta_{L^{2}}\left(\bigcup_{n\geq n_{0}}F_{n,2}(W)\right) \le \tilde r_\ve
\end{align*}
and the estimate \eqref{ineq-a-1} follows. Observe that given $k\ge 1$, the estimate is also valid for the constant family $\{F_{k}\}_{n\ge 1}$, which implies that 
\begin{align*}
\beta_{L^{2}}(F_{k}(W)) \le \tilde r_{\ve}, \quad \ve > 0.
\end{align*}
Therefore,
\begin{align*}
\beta_{L^{2}}\left(\bigcup_{n\geq 1}F_n(W)\right) & = \max\left\{\beta_{L^{2}}(F_{1}(W)),\ldots, \beta_{L^{2}}(F_{n_{0}-1}(W)), \beta_{L^{2}}\left(\bigcup_{n\geq n_{0}}F_n(W)\right)\right\} \\
& \le \tilde r_{\ve} = (\hat \varrho(l)+\ve)(\beta_{L^2}(W) +\ve) + \ve,
\end{align*}
which proves the inequality \eqref{est-eq-non-2}, since $\ve > 0$ was arbitrary.
\end{proof}

		\section{Proofs of the main results}
We assume that for any $ u \in H^{1}(\mathbb{R}^{N})$, the nonlinear mapping $F$ is defined by
\begin{equation}\label{Nemytzki_f}
[F( u)](x) = f(x, u(x)) \quad \text{for a.e. } x \in \mathbb{R}^N,
\end{equation}
where $f$ satisfies the assumptions $(f1)$\,--\,$(f3)$. In particular, this ensures that $F$ is well defined and satisfies the Lipschitz condition \eqref{F-lip-cond} with constant $L = \|l\|_{L^\infty}$. Moreover, as a consequence of Proposition \ref{prop-cond-f-n}, we obtain the estimate
\begin{equation}\label{k-set-contr-F}   
\beta_{L^2} \left( F (W)\right)\leq \hat\varrho(l)\beta_{L^2} (W). 
\end{equation}
Equation \eqref{main-eq}, with the nonlinearity defined by \eqref{Nemytzki_f}, represents the abstract form of the parabolic equation $(P)_{\lambda}$. As noted at the beginning of Section~\ref{sec-3}, equation \eqref{main-eq} generates a continuous semiflow $\Phi$ on $H^1(\mathbb{R}^N)$. By Corollary \ref{cor-admissibility}, this semiflow is admissible with respect to any bounded subset of this space.

		\subsection{Nonresonance Conley index formula}
We will prove the existence of orbits connecting stationary points for the equation $(P)_\lambda$ in the case where the linearizations of the nonlinear perturbation at zero and at infinity are not in resonance with the spectrum of the operator $-\Delta + V - \lambda$. We will need the Lyapunov function property for the semiflow generated by equation \eqref{main-eq}, when the nonlinear perturbation is given by the Nemytskii operator \eqref{Nemytzki_f}.
\begin{proposition}\label{Lapunov-function-property}		
Let $f:\mathbb{R}^N \times \mathbb{R} \to \mathbb{R}$ be a mapping satisfying conditions $(f1)$, $(f2)$, and let 
$\mathcal{E} : H^1(\mathbb{R}^N) \to \mathbb{R}$ be the functional given by
\[
\mathcal{E}(v) := \frac{1}{2} \int_{\mathbb{R}^N} \big(|\nabla v(x)|^2+(V(x)-\lambda)v(x)^2\big)\, dx 
- \int_{\mathbb{R}^N} \mathcal{F}(x,v(x))\, dx, 
\quad v \in H^1(\mathbb{R}^N),
\]
where $\mathcal{F}: \mathbb{R}^N \times \mathbb{R} \to \mathbb{R}$ is defined by
\[
\mathcal{F}(x,v) := \int_0^{v} f(x,w)\,dw, \quad x\in\mathbb{R}^N, \ v\in\mathbb{R}.
\]
Then $\mathcal{E}$ is a well-defined continuous Lyapunov functional for equation \eqref{main-eq}. More precisely, if
$u:[t_0,t_1)\to H^1(\mathbb{R}^N)$ is a solution of \eqref{main-eq}, then
\[
\frac{d}{dt}\mathcal{E}(u(t)) = -\int_{\mathbb{R}^N} |\dot u(t)|^2\,dx, \quad t\in(t_0,t_1).
\]
\end{proposition}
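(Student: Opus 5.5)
I would split $\mathcal{E}=\mathcal{E}_1-\mathcal{E}_2$, where $\mathcal{E}_1(v)=\tfrac12 q_V(v,v)-\tfrac{\lambda}{2}\|v\|_{L^2}^2$ and $\mathcal{E}_2(v)=\int_{\R^N}\mathcal{F}(x,v(x))\,dx$, and treat each part separately.

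\textbf{Well-definedness and continuity.} For $\mathcal{E}_1$ this is immediate from \eqref{bounded-from-below}--\eqref{bounded-from-above}, since $q_V$ is a bounded symmetric form on $H^1(\R^N)$. For $\mathcal{E}_2$, conditions $(f1)$ and $(f2)$ give
\[
|f(x,w)|\le |c(x)|+l(x)|w| \quad\text{for a.e. } x\in\R^N,\ w\in\R,
\]
and hence
\[
|\mathcal{F}(x,v)|\le |c(x)||v|+\tfrac12\|l\|_{L^\infty}v^2 .
\]
So $\mathcal{F}(\cdot,v(\cdot))\in L^1$ for $v\in L^2$. Measurability of $x\mapsto\mathcal{F}(x,v(x))$ follows because $\mathcal{F}$ is Carath\'eodory: it is measurable in $x$ and Lipschitz-continuous in $v$ on bounded sets.

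\textbf{Differentiability of $\mathcal{E}_2$.} I would show that $\mathcal{E}_2$ is Fr\'echet differentiable on $L^2$ with $\mathcal{E}_2'(v)h=\langle F(v),h\rangle_{L^2}$. Write
\[
\mathcal{F}(x,v+h)-\mathcal{F}(x,v)-f(x,v)h=\int_0^1\big(f(x,v+sh)-f(x,v)\big)h\,ds .
\]
By $(f2)$ the absolute value of this is at most $\tfrac12 l(x)h^2$. Integrating gives a remainder of size at most $\tfrac12\|l\|_\infty\|h\|_{L^2}^2$, so $\mathcal{E}_2$ is even $C^{1,1}$ on $L^2$. This step is the cleanest and needs no differentiability of $f$ in $u$.

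\textbf{Derivative along solutions.} By \eqref{sol-reg}, on $(t_0,t_1)$ we have $u\in C((t_0,t_1),D(A))\cap C^1((t_0,t_1),L^2)$. The chain rule for the $C^1$ map $\mathcal{E}_2:L^2\to\R$ gives
\[
\tfrac{d}{dt}\mathcal{E}_2(u(t))=\langle F(u),\dot u\rangle .
\]
For $\mathcal{E}_1$, which is continuous on $H^1$ but not on $L^2$, I need more care. This is the main obstacle, because $u$ is only known to be $C^1$ into $L^2$, not into $H^1$.

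I would first try to compute the difference quotient directly. Symmetry of $q_V$ gives
\[
\mathcal{E}_1(u(t+h))-\mathcal{E}_1(u(t))=\langle (A-\lambda)u(t),\,u(t+h)-u(t)\rangle+\tfrac12 q_{V,\lambda}\big(u(t+h)-u(t)\big),
\]
where $q_{V,\lambda}(w)=q_V(w,w)-\lambda\|w\|_{L^2}^2$, using that $u(t)\in D(A)$. Dividing by $h$, the first term tends to $\langle (A-\lambda)u,\dot u\rangle$. For the second term I would write
\[
q_{V,\lambda}(w)=\langle (A-\lambda)w,w\rangle \quad\text{with } w=u(t+h)-u(t)\in D(A).
\]
Continuity of $u$ into $D(A)$ gives $\|(A-\lambda)w\|_{L^2}\to0$, and $\|w\|_{L^2}=O(h)$, so this term is $o(h)$. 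Hence
\[
\tfrac{d}{dt}\mathcal{E}_1(u(t))=\langle (A-\lambda)u,\dot u\rangle .
\]

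\textbf{Conclusion.} Combining the two parts and using $\dot u=-(A-\lambda)u+F(u)$ yields
\[
\tfrac{d}{dt}\mathcal{E}(u)=\langle (A-\lambda)u-F(u),\dot u\rangle=-\|\dot u\|_{L^2}^2 .
\]
Finally, $\mathcal{E}$ is a Lyapunov functional: it is nonincreasing along solutions and, by continuity on $H^1$, extends to $[t_0,t_1)$. It is strictly decreasing unless $\dot u\equiv0$, that is, unless $u$ is stationary.
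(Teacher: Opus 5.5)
Your proof is correct and complete. Note that the paper states this proposition without proof and treats it as standard, so there is no argument of the authors to compare against. Judged on its own, your proposal establishes everything claimed: $\mathcal{E}$ is well defined and continuous on $H^1(\R^N)$, and the identity $\frac{d}{dt}\mathcal{E}(u(t))=-\|\dot u(t)\|_{L^2}^2$ holds on $(t_0,t_1)$.

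\textbf{Comparison with the customary route.} The textbook argument differs from yours only in how the quadratic part is differentiated. It shows $\mathcal{E}\in C^1(H^1(\R^N))$ with $\mathcal{E}'(v)h=q_V(v,h)-\lambda\langle v,h\rangle_{L^2}-\langle F(v),h\rangle_{L^2}$ and then applies the chain rule. That requires $u\in C^1((t_0,t_1),H^1(\R^N))$, which is true by Henry's parabolic regularity theory but is not part of \eqref{sol-reg}. Your splitting avoids this:
\begin{itemize}
\item The Nemytskii part is $C^{1,1}$ already on $L^2(\R^N)$. Your remainder bound $\tfrac12 l(x)h^2$ uses only $(f2)$.
\item For the quadratic part, the identity $\mathcal{E}_1(u(t+h))-\mathcal{E}_1(u(t))=\langle (A-\lambda)u(t),w\rangle+\tfrac12\langle (A-\lambda)w,w\rangle$, with $w=u(t+h)-u(t)\in D(A)$, needs only $u\in C((t_0,t_1),D(A))\cap C^1((t_0,t_1),L^2(\R^N))$. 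That is exactly what \eqref{sol-reg} provides after a time shift.
\end{itemize}
So your version is self-contained relative to what the paper records.

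\textbf{Two minor points to tighten.}
\begin{itemize}
\item Measurability of $x\mapsto\mathcal{F}(x,v)$ deserves one line. Since $f(x,\cdot)$ is continuous for a.e.\ $x$, $\mathcal{F}(x,v)$ is an a.e.\ limit of Riemann sums $\sum_i f(x,w_i)\Delta w_i$, each measurable in $x$ by $(f1)$. This makes $\mathcal{F}$ Carath\'eodory.
\item The ``strictly decreasing'' claim should be phrased as follows. If $\mathcal{E}(u(t_a))=\mathcal{E}(u(t_b))$ for some $t_a<t_b$, then $\dot u\equiv 0$ on $(t_a,t_b)$. Hence $u$ is constant on $[t_a,t_b]$, and that constant is an equilibrium. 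This is the form in which the Lyapunov property is used later in the paper.
\end{itemize}
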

The following theorem provides conditions under which 
$K_0 := \{0\}$ and the set $K_\infty$, consisting of all bounded solutions 
of the semiflow $\Phi$, are isolated invariant sets. 
It also gives explicit formulas for their Conley indices in terms of the 
multiplicities of the eigenvalues of the corresponding linearized operators.
\begin{theorem}\label{non-res-index-formulae}
Let $f:\mathbb{R}^N \times \mathbb{R}\to \mathbb{R}$ be a mapping satisfying conditions $(f1)$\,--\,$(f3)$, and suppose that 
$f(x,0)=0$ for a.e. $x\in\mathbb{R}^N$. \\[3pt]
\noindent\makebox[6.5mm][r]{$(i)$} \parbox[t][][t]{156mm}{If the linearization condition $(L_0)$ holds with 
$$\ker (-\Delta + V-\alpha)=\{ 0\} \quad\text{and}\quad\hat\varrho(|\alpha|) < \inf \sigma_{ess}(-\Delta+V)-\lambda,$$ then $K_0=\{ 0 \}$ is an isolated invariant set with respect to the semiflow $\Phi$ and
				$$
				h(\Phi, K_0) = \Sigma^{k_0},
				$$
				where $k_0 := d^- (V-\alpha,\lambda)$.}\\[3pt]
\noindent\makebox[6.5mm][r]{$(ii)$} \parbox[t][][t]{156mm}{If the linearization condition $(L_\infty)$ holds with $$\ker(-\Delta + V-\omega)=\{0\} \quad\text{and}\quad
				\hat \varrho(|\omega|) < \inf \sigma_{ess}(-\Delta+V)-\lambda,$$ then the set $K_\infty$, consisting of all bounded solutions of the semiflow $\Phi$, is an isolated invariant set with respect to the semiflow $\Phi$ and 
				$$
				h(\Phi, K_\infty) = \Sigma^{k_\infty},
				$$
				where $k_\infty := d^- (V-\omega,\lambda)$.}
\end{theorem}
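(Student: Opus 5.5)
The plan is a homotopy (continuation) argument in the Rybakowski Conley index: deform the Nemytskii operator $F$ to the linear map $u\mapsto \alpha u$ (for $K_0$), or $u\mapsto \omega u$ (for $K_\infty$), and then compute the index of the resulting linear hyperbolic problem. Admissibility along the homotopy will come from Corollary~\ref{cor-admissibility} combined with Proposition~\ref{prop-cond-f-n}. Isolation will come from a priori bounds proved by contradiction, using rescaled sequences.

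\emph{Part (i).} I would set $f_s(x,u):=f(x,su)/s$ for $s\in(0,1]$ and $f_0(x,u):=\alpha(x)u$. By $(L_0)$, $f_s\to f_0$ pointwise as $s\to0$. Every $f_s$ has Lipschitz bound $l$ and vanishes at $u=0$. The condensing constant along the family must lie below $\inf\sigma_{ess}-\lambda$. To secure this I would replace $l$ near infinity by $\max\{l,|\alpha|\}$, using $|\alpha|\le l$ a.e., which follows from $(f2)$. With this, Proposition~\ref{prop-cond-f-n} and Corollary~\ref{cor-admissibility} give admissibility of the family on bounded sets.

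The key claim is that there is $r>0$ such that, for all small $s$, the only full solution of the $f_s$-equation contained in $\bar B_{H^1}(0,r)$ is $0$. Since the $f_s$-problem is the $f$-problem rescaled by $s$, this is equivalent to: there is $r>0$ such that no nonzero full solution of $(P)_\lambda$ stays in $\bar B(0,r)$. Suppose instead that $u_n\neq0$ are full solutions with $\sup_t\|u_n(t)\|_{H^1}=\rho_n\to0$. Put $w_n:=u_n/\rho_n$. Then $w_n$ solves the equation with nonlinearity $f(x,\rho_n w)/\rho_n$, has $H^1$-norm bounded by $1$, and attains norm close to $1$ at some time, which I shift to $t=0$. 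Applying the $\beta$-estimate of Proposition~\ref{prop-con-poinc} on $[t-T,t]$ with $T\to\infty$ shows that $\{w_n(t)\}$ is relatively compact. A diagonal argument together with Corollary~\ref{cor-admissibility} and continuous dependence then yields a limit full solution $w$ of $\dot w=-(A-\lambda-\alpha)w$ that is bounded and satisfies $\|w(0)\|_{H^1}\ge 1/2$. Because $\lambda\notin\sigma(A-\alpha)$, the linear operator $A-\alpha-\lambda$ is hyperbolic, and its only bounded full solution is $0$, a contradiction. Once $K_0$ is isolated, homotopy invariance gives $h(\Phi,K_0)=h(\text{linear flow},\{0\})$. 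For a hyperbolic linear semiflow whose unstable space has dimension $d^-(V-\alpha,\lambda)$ (the part of $A-\alpha$ with spectrum below $\lambda$, which is finite-dimensional because $\hat\varrho(|\alpha|)<\inf\sigma_{ess}(A)-\lambda$ gives $\inf\sigma_{ess}(A-\alpha)>\lambda$ by a compactness-of-perturbation argument), this index is $\Sigma^{k_0}$ by the standard formula from the Appendix.

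\emph{Part (ii).} The argument is symmetric. I would use $f_s(x,u):=sf(x,u/s)$, with $s\to0$ corresponding to $|u|\to\infty$, and $f_0(x,u)=\omega(x)u$. The needed a priori bound is that all full bounded solutions of the $f_s$-problems, $s\in[0,1]$, lie in a fixed ball $B(0,R)$. Suppose they do not. Then there are full solutions with $\rho_n:=\sup_t\|u_n(t)\|_{H^1}\to\infty$. Before rescaling, each $u_n$ must be shown to be bounded at all, which is a problem because only boundedness of each individual solution is assumed. Proposition~\ref{prop-est-q} and its corollary, applied with $L_\sigma=\|l\|_\infty$ after shifting $\gamma$, give this. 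Normalizing $w_n=u_n/\rho_n$, the nonlinearity becomes $f(x,\rho_n w)/\rho_n\to\omega w$ pointwise by $(L_\infty)$ and dominated convergence. The compactness argument above then produces a nonzero bounded full solution of the $\omega$-linearized problem, contradicting $\lambda\in\varrho(A-\omega)$. Setting $K_\infty$ to be the maximal invariant set inside $\bar B(0,R+1)$ and applying continuation gives $h=\Sigma^{k_\infty}$.

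The main obstacle is the compactness and limit passage for the rescaled sequences $w_n$ on the whole line. This requires uniform $\beta$-decay with a condensing constant below the gap uniformly in $n$, which needs the $\hat\varrho$ hypotheses for both $l$ and the linearizations. It also requires showing that the pointwise convergence of the rescaled nonlinearities yields $F_n(w_n)\to \alpha w$ or $\omega w$ in $L^2$ along convergent sequences, obtained by the same local/tail splitting as in Proposition~\ref{prop-cond-f-n}.
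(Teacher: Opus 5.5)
Your proof is correct, but it runs the continuation along a different homotopy than the paper.

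\textbf{The homotopy.} The paper uses the linear homotopy $h(x,u,s)=(1-s)f(x,u)+s\alpha(x)u$ (and the analogue with $\omega$ in (ii)). It must therefore prove uniform isolation of $\overline{B(0,r_0)}$ for all $\Psi^{(s)}$ directly, by a blow-up argument with a varying parameter $s_n$. This costs nothing extra, since $(1-s_n)\rho_n^{-1}f(x,\rho_n v)+s_n\alpha(x)v\to\alpha(x)v$ whatever $s_n$ does.

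Your scaling homotopy $f(x,su)/s$ (resp.\ $sf(x,u/s)$) is conjugate to $\Phi$ through $v\mapsto sv$ for $s>0$. So the invariant sets of the intermediate flows are rescaled copies of those of $\Phi$, and uniform isolation reduces to isolation for $\Phi$ itself plus hyperbolicity at $s=0$. The price is that continuity of the family at $s=0$ now uses $(L_0)$, resp.\ $(L_\infty)$. In the paper the homotopy is trivially continuous, and the linearization enters only in the blow-up.

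\textbf{What is shared and what differs.} The blow-up/compactness step (Proposition~\ref{prop-cond-f-n}, Corollary~\ref{cor-admissibility}, diagonal limit) is the same in both. To exclude a nonzero bounded full solution of the limiting linear equation, you use the exponential dichotomy of $A-\alpha-\lambda$. The paper instead appeals to the Lyapunov functional of Proposition~\ref{Lapunov-function-property}, so your route is more direct. Two of your remarks make explicit points the paper leaves tacit:
\begin{itemize}
\item $|\alpha|,|\omega|\le l$ a.e., so the paper's $l_\alpha=\max\{l,|\alpha|\}$ is just $l$.
\item $\inf\sigma_{ess}(A-\alpha)\ge\inf\sigma_{ess}(A)-\hat\varrho(|\alpha|)>\lambda$, so $k_0$ is finite.
\end{itemize}

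\textbf{A step to delete in part (ii).} Each $u_n$ is bounded by hypothesis, since that is what membership in $K_\infty$ means, so $\rho_n<\infty$ needs no argument. Proposition~\ref{prop-est-q} could not supply it here anyway, for two reasons:
\begin{itemize}
\item It rests on Proposition~\ref{prop-est-infty-2}, hence on the $L^\infty$ bound \eqref{F-L-infty-bdd}, which fails for an asymptotically linear $f$.
\item $L_\sigma=\|l\|_{L^\infty}$ need not lie below $\gamma<\inf\sigma_{ess}(A)-\lambda$, because $(f3)$ controls only $\hat\varrho(l)$.
\end{itemize}
With that sentence removed, the rest of your argument for (ii) stands.
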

		\begin{proof}
			(i) Let $h:\R^{N}\times\R\times[0,1] \to \R$ be a linear homotopy between $f$ and multiplication by $\alpha$, defined by
\begin{align*}
h(x,u,s) := (1-s)f(x,u) + s\alpha(x)u, \quad x \in \mathbb{R}^N, \ u\in\R, \text{ and } s \in [0,1],
\end{align*}
where $\alpha\in L^{\infty}(\R^{N})$. Note that for all $u_1, u_2\in\R$ and $s\in [0,1]$, we have
\begin{equation}
|h(x,u_1,s)-h(x, u_2, s) | \leq  l_{\alpha}(x) |u_1-u_2| \quad \text{ for a.e. } x\in \R^N, 
\end{equation}
where $l_{\alpha}(x):=\max\{ l (x), |\alpha(x)| \}$ for a.e. $x\in\R^N$. Let us consider the family of equations
\begin{equation}\label{eq-a-h}
\dot u(t)=-Au(t)+\lambda u(t)+H(u(t),s), \quad t >0,
\end{equation}
where the map $H:H^{1}(\R^{N})\times[0,1]\to L^{2}(\R^{N})$ is defined, for any $u\in H^{1}(\R^{N})$, by 
\[
[H(u,s)](x) := h(x,u(x),s) \quad \text{for a.e. } x \in \mathbb{R}^N \text{ and } s \in [0,1].
\]
Since $l_{\alpha} \in L^{\infty}(\mathbb{R}^{N})$, it follows from Proposition \ref{prop-cond-f-n} that $H$ is continuous and satisfies the Lipschitz condition
\begin{align*}
\|H(u,s) - H(v,s)\|_{L^{2}} \le L\|u-v\|_{H^{1}}, \quad u,v\in H^{1}(\R^{N}), \ s\in[0,1],
\end{align*}
where $L := \|l_{\alpha}\|_{L^{\infty}}$. Consequently, equation~\eqref{eq-a-h} generates a continuous family of semiflows $\{\Psi^{(s)}\}_{s\in[0,1]}$ (see \cite[Th.~3.4.1]{Henry}). By assumption~\eqref{cond-alpha}, we have
\begin{align*}
\hat\varrho(l_{\alpha}) = \max\{\hat\varrho(l), \hat\varrho(|\alpha|)\} 
< \inf \sigma_{ess}(-\Delta+V)-\lambda.
\end{align*}
This, together with Proposition \ref{prop-cond-f-n} and Corollary~\ref{cor-admissibility}, implies that any bounded subset of $H^1(\mathbb{R}^N)$ is admissible with respect to the family $\{\Psi^{(s)}\}_{s\in[0,1]}$.	

We claim that there exists $r_0>0$ such that $\overline{B(0,r_0)}$ is an isolating neighborhood for $K_0$ with respect to the semiflows $\Psi^{(s)}$ for all $s\in[0,1]$. Suppose, to the contrary, that this is not the case. Then there exist sequences $(s_n)$ in $[0,1]$ and $(u_n)$ of nonzero bounded full solutions of the semiflow $\Psi^{(s_n)}$ for $n \ge 1$, such that
\[
\rho_n := \sup_{t \in \mathbb{R}} \|u_n(t)\|_{H^1} \to 0 \quad \text{as } n \to \infty.
\]
			Without loss of generality, we may assume that
			$$ \|u_n(0)\|_{H^1} \geq (1-1/2n) \rho_n \ \mbox{ for all } \ n\geq 1.$$
			Clearly, the mapping $v_n := \rho_n^{-1} u_n$ is a full solution of the equation 
			\begin{equation}\label{n-th-evol-eq-zero}
				\dot v (t)= - Av(t) + \lambda v(t)+ F_n (v(t)), \quad t>0,
			\end{equation}
			where $F_n: H^1(\R^N)\to L^2(\R^N)$ is the Nemytskii operator determined by the function 
			$$
			f_n(x,v) := (1-s_n)\rho_n^{-1} f(x,\rho_n v) + s_n \alpha(x) v \ \text{ for  a.e. } x\in\R^N \text{ and all } \ v\in\R.
			$$
Then, for each $n \ge 1$, the function $f_n$ satisfies condition $(f1)$ with $c=0$ and condition $(f2)$ with the Lipschitz constant given by $\tilde l(x):=\max\{|\alpha(x)|,l(x)\}$ for $x\in\R^{N}$. Note also that by condition $(L_{0})$, the convergence holds
			$$
			f_n (x,v) \to \alpha (x)v \ \mbox{ as } \ n\to +\infty,
			$$
for every $v\in\R$  and a.e. $x\in\R^N$. Therefore, by Proposition \ref{prop-cond-f-n}, we have $F_n (u)\to m_\alpha (u)$ in $L^2(\R^N)$ for any $u\in H^1(\R^N)$, where $m_\alpha: H^1(\R^N)\to L^2(\R^N)$ is defined by $[m_\alpha(u)](x):=\alpha(x)u(x)$. Moreover, 
			$$
			\beta_{L^2} \left( \bigcup_{n\geq 1} F_n(W)\right)\leq \hat\varrho (\tilde l) \beta_{L^2}(W)
			$$
			for any bounded $W\subset H^1(\R^N)$, where 
			$$
			\hat\varrho (\tilde l) = \max\{\hat\varrho(l),\hat\varrho(|\alpha)| \} < \inf \sigma_{ess}(A)-\lambda.
			$$
Take any $\tau>0$ and define $w_n:\mathbb{R}\to H^1(\mathbb{R}^N)$ by $w_n(t):=v_n(t-n-\tau)$ for $ t\in\mathbb{R}$.
Then $w_n$ solves the equation \eqref{n-th-evol-eq-zero} and $v_n(-\tau)=w_n(n)$ for $n\ge 1$. Observe that the mappings $F_n$ satisfy the Lipschitz condition \eqref{F-lip-cond} with the same constant $L = \|\tilde l\|_{L^{\infty}}$. Hence, in view of the bound $\|w_n(t)\|_{H^1}\le 1$ for $t\in\mathbb{R}$ and Corollary \ref{cor-admissibility}, the sequence $(v_n(-\tau))$ contains a subsequence converging in $H^1(\mathbb{R}^N)$.
			Hence, for any $\tau>0$, there exists a subsequence $\left(v_{n_k}(-\tau)\right)$ that converges to some $\bar v_\tau \in H^1(\mathbb{R}^N)$. By the standard continuity of solutions \cite[Th.~3.4.1]{Henry}, the sequence of solutions $\left(v_{n_k}\right)$ converges uniformly on compact subsets of $[-\tau,+\infty)$ to a solution $v$ of the equation
\begin{equation}\label{limit-lin-eq-zero}
\dot v(t)= -Av(t)+\lambda v(t)+m_\alpha(v(t)), \quad t>0,
\end{equation}
with initial condition $v(-\tau)=\bar v_\tau$. 
Using a diagonal argument, we obtain a solution $v:\R\to H^1(\R^N)$ of \eqref{limit-lin-eq-zero} such that
			$\|v(t)\|_{H^1}\leq 1$ for all $t\in\R$ and $\|v(0)\|_{H^1}=1$. Since there exists a Lyapunov functional for \eqref{limit-lin-eq-zero} (see Proposition \ref{Lapunov-function-property}) and  $v$ is not identically zero, it follows that both the $\alpha$- and $\omega$-limit sets of $v$ contain distinct equilibria of \eqref{limit-lin-eq-zero}. This contradicts the assumption that $\ker (-\Delta + V-\alpha)=\{ 0\}$, thus proving the claim that there exists $r_0>0$ such that $\overline{B(0,r_0)}$ is an isolating neighborhood for $K_0$ with respect to the family of semiflows $\{\Psi^{(s)}\}_{s\in[0,1]}$. \\
\indent Hence, by the homotopy invariance of the Conley index and Theorem \ref{linear-conley-index} we obtain 
$$
h(\Phi, K_0) = h(\Psi^{(0)}, K_0) = h(\Psi^{(1)}, K_0)=\Sigma^{k_0},
$$
which completes the proof of (i). The proof of (ii) is analogous (compare \cite[Th.~5.1]{Cw-Kok}).
\end{proof}
\begin{proof}[Proof of Theorem \ref{14102025-1145}]
In view of Theorem \ref{non-res-index-formulae}, we have 
$$
h(\Phi, K_0) = \Sigma^{k_0}  \  \ \text{ and } \ \  h(\Phi, K_\infty) = \Sigma^{k_\infty}.
$$
In particular, both sets $K_{0}$ and $K_\infty$ are irreducible. Since $k_0\neq k_\infty$, we can apply Theorem \ref{rybakowski-irreducible} to obtain a full nonzero solution $u:\R\to H^1(\R^N)$ for the semiflow $\Phi$ such that either 
$\alpha(u)=K_0$ or $\omega(u)=K_0$. Then, by Proposition \ref{Lapunov-function-property}, the semiflow $\Phi$ is gradient-like, which implies that either the $\alpha$- or $\omega$-limit set of $u$ contains a nonzero stationary solution.
\end{proof}

\subsection{Resonance Conley index formula}	
We will prove the existence of orbits connecting stationary points for the equation $(P)_\lambda$ in the case where the linearization of the nonlinear perturbation at infinity is in resonance with the spectrum of the operator $-\Delta + V - \lambda$. Recall that in this case $X_0 := \ker(-\Delta + V - \lambda) \neq \{ 0 \}$ and that $f$ satisfies the condition \eqref{bound-f}. In view of condition $(f3)$, we can choose $\gamma\in\R$ such that
\begin{equation}\label{gamma-choice}
\hat \varrho(l) < \gamma < \inf\sigma_{ess}(-\Delta + V) - \lambda.
\end{equation}
Let us consider a family of evolution equations
		\begin{equation}\label{eq-diff-g}
			\dot u(t) = - A u(t) + \lambda u(t) + G(u(t),s),\quad t>0,
		\end{equation}
		where $G: H^1(\R^N) \times [0,1] \to L^2(\R^N)$ is given by 
		\begin{align*}
			G(u, s)  := P F (P u+s Q u) + s Q F(P u+s Q u)  \ \mbox{ for all } \ u\in H^1(\R^N), \ s\in [0,1],
		\end{align*}
		and $F:H^1(\R^N)\to L^2(\R^N)$ is the Nemytskii operator associated with $f$, as defined in \eqref{Nemytzki_f}.
		\begin{lemma}\label{F-satisfies-pseudo-lip-ineq}
Assume that conditions $(f1)$\,--\,$(f3)$ hold and that $f$ satisfies the inequality \eqref{bound-f} with $m\in L^{\infty}(\R^{N})$. Then, for any $\gamma\in\R$ such that 
$$
\hat \varrho (l) < \gamma < \inf \sigma_{ess} (-\Delta +V)-\lambda \quad\text{and}\quad \gamma+\lambda \not\in\sigma(-\Delta +V),
$$
the mapping $F$ satisfies conditions \eqref{pseudo-lip-ineq} and \eqref{pseudo-lip-vs-spectrum} with some positive constants $L_{\sigma}$ and $C_{\sigma}$.
\end{lemma}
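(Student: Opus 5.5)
We need to show $\|F(u)\|_{L^2}\le L_\sigma(\|Pu\|_{L^2}+\|Qu\|_{L^2})+C_\sigma$ with $L_\sigma<\gamma$. The plan is to split $\R^N$ into a large ball and its complement, using $(f2)$ on the complement (where $l$ is close to $\hat\varrho(l)$) and the bound \eqref{bound-f} on the ball.

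Since $\hat\varrho(l)<\gamma$, we fix $\ve>0$ with $\hat\varrho(l)+\ve<\gamma$ and set $L_\sigma:=\hat\varrho(l)+\ve$. By the definition of $\hat\varrho$, there is $R>0$ such that $l(x)\le L_\sigma$ for a.e. $|x|\ge R$. Write $F(u)=\chi_{B(0,R)}F(u)+(1-\chi_{B(0,R)})F(u)$.

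On the ball, \eqref{bound-f} gives
\[
\|\chi_{B(0,R)}F(u)\|_{L^2}\le \|m\|_{L^\infty}|B(0,R)|^{1/2},
\]
which is a constant independent of $u$.

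Outside the ball, $(f2)$ together with $f(x,0)=c(x)$ from $(f1)$ yields $|f(x,u(x))|\le |c(x)|+l(x)|u(x)|\le |c(x)|+L_\sigma|u(x)|$ for a.e. $|x|\ge R$. Hence
\[
\|(1-\chi_{B(0,R)})F(u)\|_{L^2}\le \|c\|_{L^2}+L_\sigma\|u\|_{L^2}.
\]
Since $P$ and $Q=I-P$ are complementary projections, $\|u\|_{L^2}\le \|Pu\|_{L^2}+\|Qu\|_{L^2}$ (and by orthogonality even $\|u\|_{L^2}^2=\|Pu\|_{L^2}^2+\|Qu\|_{L^2}^2$).

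Adding the two pieces gives \eqref{pseudo-lip-ineq} with $L_\sigma=\hat\varrho(l)+\ve$ and $C_\sigma:=\|c\|_{L^2}+\|m\|_{L^\infty}|B(0,R)|^{1/2}$. Since $L_\sigma<\gamma$, this is exactly \eqref{pseudo-lip-vs-spectrum}. If the resonant setting has $f(x,0)=0$, then $c=0$ and the first term of $C_\sigma$ drops out. In the degenerate case $\hat\varrho(l)=0$, $L_\sigma=\ve$ is still positive, as the lemma requires.

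There is no real obstacle. The only point needing care is that the smallness must come from $l$ near infinity, not from $\|l\|_{L^\infty}$. The bounded region, where $l$ may be large, is controlled instead by the $L^\infty$ bound $m$ on a set of finite measure. This trade is exactly why the hypothesis \eqref{bound-f} is needed.
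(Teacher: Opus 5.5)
Your proof is correct and is essentially the paper's argument. Both choose $L_\sigma$ above $\hat\varrho(l)$, apply $(f2)$ outside a large ball where $l<L_\sigma$, and use the bound $m$ on the ball, giving $C_\sigma=\|f(\cdot,0)\|_{L^2}+\|m\|_{L^\infty}|B(0,R)|^{1/2}$. Your strict choice $L_\sigma<\gamma$ is slightly more careful than the paper's $L_\sigma\le\gamma$, since \eqref{pseudo-lip-vs-spectrum} requires a strict inequality.
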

\begin{proof}
Let us choose $L_\sigma > 0$ such that
$$
\hat \varrho(l) < L_\sigma \leq \gamma.
$$
In view of $(f3)$, there exists $r>0$ such that
$$
l(x) < L_\sigma \ \text{ for a.e. } \ x\in \R^N \setminus B(0,r). 
$$
Hence, using assumptions $(f1)$ and $(f2)$, we obtain that for all $u\in H^1(\R^N)$, 
$$
|f(x, u(x))| \leq L_\sigma | u(x)|+|f(x,0)| \ \mbox{ for a.e. } x\in \R^N \setminus B(0,r).
$$
Taking into account inequality~\eqref{bound-f}, we derive \eqref{pseudo-lip-ineq} with $C_\sigma :=\|f(\,\cdot\,, 0)\|_{L^2} + \|m\|_{L^\infty} |B(0,r)|^{1/2}$.
		\end{proof}
		\begin{lemma}\label{G-basic-props}
			The mapping $G$ is continuous and there exist constants $M_G>0$ and $L_G>0$ such that
			\begin{equation}\label{G-L-infty-bdd}
				\| G(u,s)\|_{L^\infty}\leq M_G, \quad u\in H^1(\R^N), \ s\in [0,1],
			\end{equation}
			\begin{equation}\label{G-Lipschitz}
				\|G(u_1, s)-G(u_2, s)\|_{L^2} \leq L_G \|u_1-u_2\|_{H^1}, \quad u_1, u_2\in H^1(\R^N), \ s\in [0,1].
			\end{equation}
			Moreover, there exist constants $0 < L_\sigma < \gamma$ and $C_\sigma > 0$ such that
			\begin{equation}\label{G-pseudo-lip}
				\|G(u,s)\|_{L^2} \leq L_\sigma ( \|Pu\|_{L^2} + \|Qu\|_{L^2} ) + C_\sigma, \quad u\in H^1(\R^N), \ s\in [0,1].
			\end{equation}
		\end{lemma}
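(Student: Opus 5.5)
The plan is to prove the three properties of $G$ separately, using Proposition \ref{prop-inclusion} for the projections, the Lipschitz property of the Nemytskii operator $F$, and Lemma \ref{F-satisfies-pseudo-lip-ineq}. Throughout, $Q = I-P = Q_- + Q_+$.

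\emph{Continuity and the Lipschitz bound \eqref{G-Lipschitz}.} By $(f2)$, $F$ is Lipschitz from $L^2$ to $L^2$ with constant $L=\|l\|_{L^\infty}$. The projections $P$ and $Q$ are bounded on $L^2$ with norm at most $1$, and they are bounded on $H^1$ since $P$ has finite rank with range in $D(A)\subset H^1$. For fixed $s$ we write
\[
G(u_1,s)-G(u_2,s)=P\bigl(F(w_1)-F(w_2)\bigr)+sQ\bigl(F(w_1)-F(w_2)\bigr), \qquad w_i:=Pu_i+sQu_i .
\]
Then $\|w_1-w_2\|_{L^2}\le 2\|u_1-u_2\|_{L^2}$, which gives \eqref{G-Lipschitz} with $L_G=4L$. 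For joint continuity in $(u,s)$, the map $(u,s)\mapsto Pu+sQu$ is continuous into $L^2$, and $s\mapsto sQF(\cdot)$ is continuous. Combined with the Lipschitz bound, this yields continuity of $G$.

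\emph{The $L^\infty$ bound \eqref{G-L-infty-bdd}.} Condition \eqref{bound-f} gives $\|F(w)\|_{L^\infty}\le\|m\|_{L^\infty}$ for every $w$. However, the target estimate concerns $PF(w)$ and $QF(w)$, where $F(w)\in L^2\cap L^\infty$. Proposition \ref{prop-inclusion} with $p=\infty$ gives $\|PF(w)\|_{L^\infty}\lesssim\|F(w)\|_{L^\infty}$. For $Q=I-P$ we then get $\|QF(w)\|_{L^\infty}\le (1+\|P\|_{\mathcal L(L^\infty)})\|m\|_{L^\infty}$. Hence $M_G:=(1+2\|P\|_{\mathcal L(L^\infty)})\|m\|_{L^\infty}$ works, uniformly in $s\in[0,1]$.

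\emph{The sublinear bound \eqref{G-pseudo-lip}.} This is the step that needs some care, because $w=Pu+sQu$ enters $F$ rather than $u$. Lemma \ref{F-satisfies-pseudo-lip-ineq} provides $L_\sigma'<\gamma$ and $C_\sigma'$ with $\|F(w)\|_{L^2}\le L_\sigma'(\|Pw\|+\|Qw\|)+C_\sigma'$. Since $Pw=Pu$ and $Qw=sQu$, this gives $\|F(w)\|_{L^2}\le L_\sigma'(\|Pu\|_{L^2}+\|Qu\|_{L^2})+C'_\sigma$.

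The naive next step would be $\|G\|\le\|PF\|+s\|QF\|\le 2\|F\|$, but this doubles the constant and might violate $L_\sigma<\gamma$. Instead we use orthogonality: $\|PF+sQF\|^2_{L^2}\le\|PF\|^2+\|QF\|^2=\|F\|^2$. So $\|G(u,s)\|_{L^2}\le\|F(w)\|_{L^2}$, and the same constants $L_\sigma=L'_\sigma<\gamma$ and $C_\sigma=C'_\sigma$ work. Securing this strict inequality $L_\sigma<\gamma$ through orthogonality, rather than a crude triangle inequality, is the only point where some care is needed.
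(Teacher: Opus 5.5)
Your proposal is correct and follows essentially the same route as the paper. You bound $G$ in $L^\infty$ via Proposition \ref{prop-inclusion} with $p=\infty$, obtain the Lipschitz estimate from the $L^2$-Lipschitz property of the Nemytskii operator, and get the sublinear bound from Lemma \ref{F-satisfies-pseudo-lip-ineq} applied to $Pu+sQu$ together with $\|G(u,s)\|_{L^2}\le\|F(Pu+sQu)\|_{L^2}$. The paper uses that last inequality without comment; you justify it explicitly by orthogonality. The remaining differences are cosmetic: the paper rewrites $G(u,s)=(1-s)PF(Pu+sQu)+sF(Pu+sQu)$, it obtains a sharper Lipschitz constant than your $4L$, and it leaves continuity in $s$ implicit, whereas you spell it out.
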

		\begin{proof}
			First, observe that for all $u\in H^1(\R^N)$ and  $s\in [0,1]$, we have
			\begin{equation}\label{G-another-form}
				G(u,s)=(1-s)PF(Pu+sQu) + sF(Pu+sQu).
			\end{equation}
			Applying Proposition \ref{prop-inclusion} with $p = \infty$, we obtain
			$$
			\|G(u,s)\|_{L^\infty} \lesssim (1-s)\|F(Pu+sQu)\|_{L^\infty} + s\|F(Pu+sQu)\|_{L^{\infty}}\leq \|m\|_{L^\infty},
			$$
			which proves \eqref{G-L-infty-bdd}. To verify \eqref{G-Lipschitz}, take any $u_1, u_2\in H^1(\R^N)$ and $s\in [0,1]$. Then
			\begin{align*}
			\| G(u_1, s)-G(u_2,s)\|_{L^{2}} & \le \|F(Pu_1+sQu_1)-F(Pu_2+sQu_2)\|_{L^2} \\
			& \le \|l\|_{L^{\infty}}\|(Pu_1+sQu_1)-(Pu_2+sQu_2)\|_{L^{2}} \\
			& \lesssim\|u_1-u_2\|_{L^{2}} \le \|u_1-u_2\|_{H^1},
			\end{align*}
			where we used the fact that $F$ is Lipschitz with constant $L=\|l\|_{L^{\infty}}$. \\
			\indent Moreover, since \eqref{gamma-choice} holds, it follows from Lemma \ref{F-satisfies-pseudo-lip-ineq} that there exist constants $0 < L_\sigma < \gamma$ and $C_\sigma > 0$ such that inequality \eqref{pseudo-lip-ineq} is satisfied. Consequently,
			$$
			\|G(u,s)\|_{L^2}\leq \|F(Pu+sQu)\|_{L^2}\leq  L_\sigma ( \|Pu\|_{L^2}+s\|Qu\|_{L^2}) + C_\sigma,
			$$
			which proves \eqref{G-pseudo-lip} and completes the proof.
		\end{proof}
\begin{lemma}\label{prop-cond-g}
			The following inequality holds
			\begin{align}\label{est-eq-non-2g}
				\beta_{L^{2}}(G(W\times[0,1])) \le \hat \varrho (l) \beta_{L^{2}}(W),
			\end{align}
			for any bounded set $W \subset H^{1}(\R^{N})$.
\end{lemma}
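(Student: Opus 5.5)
We will use the representation \eqref{G-another-form},
$$
G(u,s)=(1-s)PF(Pu+sQu)+sF(Pu+sQu),
$$
and estimate the two summands separately via the subadditivity of $\beta_{L^2}$:
$$
\beta_{L^2}(G(W\times[0,1]))\le \beta_{L^2}\big(\{(1-s)PF(Pu+sQu)\}\big)+\beta_{L^2}\big(\{sF(Pu+sQu)\}\big),
$$
where both sets range over $u\in W$ and $s\in[0,1]$. The first set lies in the finite-dimensional space $X_0$. It is bounded there because $F$ is Lipschitz with constant $\|l\|_{L^\infty}$, $F(0)=c\in L^2$, and $W$ is bounded, so $\{Pu+sQu\}$ is bounded in $L^2$. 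Hence its measure of noncompactness vanishes.

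For the second set, we introduce $W':=\{Pu+sQu \mid u\in W,\ s\in[0,1]\}$. This set is bounded in $H^1(\R^N)$, since $P$ and $Q$ are bounded on $H^1$: $P$ has finite rank spanned by eigenfunctions in $D(A)\subset H^1$, and $Q=I-P$. We then write
$$
\{sF(w)\mid w\in W',\ s\in[0,1]\}\subset [0,1]\cdot F(W').
$$
Multiplication by the compact scalar set $[0,1]$ does not increase $\beta$ for bounded sets. To see this, take an $\varepsilon$-net $\{y_i\}$ of $F(W')$ and a finite net $\{s_j\}$ of $[0,1]$; then $\{s_jy_i\}$ is a net for $[0,1]\cdot F(W')$ with radius $\varepsilon+\delta\sup\|F(W')\|$. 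Hence
$$
\beta_{L^2}\big([0,1]\cdot F(W')\big)\le \beta_{L^2}(F(W'))\le \hat\varrho(l)\,\beta_{L^2}(W'),
$$
where the last inequality is \eqref{k-set-contr-F}.

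The step needing care is the final bound $\beta_{L^2}(W')\le\beta_{L^2}(W)$. We have $W'\subset PW+[0,1]\cdot QW$. Here $PW$ is bounded in a finite-dimensional space, so $\beta_{L^2}(PW)=0$, and $\beta_{L^2}([0,1]\cdot QW)\le\beta_{L^2}(QW)$ by the same net argument as above. Writing $QW\subset W-PW$ gives $\beta_{L^2}(QW)\le\beta_{L^2}(W)$. Alternatively, since $Q$ is an orthogonal projection in $L^2$ with norm $1$, one can apply $Q$ directly to an $L^2$-net of $W$. Combining these estimates yields $\beta_{L^2}(G(W\times[0,1]))\le\hat\varrho(l)\beta_{L^2}(W)$. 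One point to check is whether \eqref{k-set-contr-F} requires $W'$ to be bounded in $H^1$ rather than merely in $L^2$; the $H^1$-boundedness of $W'$ established above covers this.
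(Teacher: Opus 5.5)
Your proposal is correct and follows essentially the paper's argument: drop the $X_0$-valued summand $(1-s)PF(Pu+sQu)$ as a bounded set in a finite-dimensional space, bound the $sF(Pu+sQu)$ part by $\beta_{L^2}$ of $F(\{Pu+sQu\})$, apply \eqref{k-set-contr-F}, and then use $Pu+sQu=(1-s)Pu+su$, where the finite-dimensional piece contributes nothing. The differences are cosmetic. Where the paper invokes $\beta_{L^2}(\mathrm{conv}(\{0\}\cup S))=\beta_{L^2}(S)$, you use a direct net argument for scalar multiples. You also explicitly check that $\{Pu+sQu\}$ is bounded in $H^1$, which \eqref{k-set-contr-F} requires and which the paper leaves implicit.
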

\begin{proof} 
Using \eqref{G-another-form} and the fact that $X_0$ is finite-dimensional, we obtain
			\begin{align*}
				\beta_{L^{2}}(G(W \times [0,1])) & \le \beta_{L^{2}}(\{s F (P u+s Q u) \ |\ u\in W, \ s\in [0,1]\}) \\
				& \le \beta_{L^{2}}(\mathrm{conv}(\{0\}\cup \{F (P u+s Q u) \ |  \ u\in W, \ s\in [0,1] \}) \\
				& \le \beta_{L^{2}}(\{F (P u+s Q u) \ | \ u\in W, \ s\in [0,1]\}).
			\end{align*}
			If we denote $W_{0} := \{(1-s)P u \ | \ u\in W, \ s\in [0,1] \}$, then by the inequality \eqref{k-set-contr-F}, we have
			$$
			\begin{aligned}
				\beta_{L^{2}}(G(W\times [0,1])) & \le \hat \varrho (l) \beta_{L^{2}}(\{(1-s)P u+ s u \ | \ u\in W, \ s\in[0,1] \}) \\
				& \le \hat \varrho(l)\beta_{L^{2}}(W_{0}) + \hat \varrho (l) \beta_{L^{2}}(\mathrm{conv}(\{0\} \cup W)).
			\end{aligned}
$$
Since $W_{0}$ is bounded and contained in a finite-dimensional space, we have $\beta_{L^{2}}(W_{0}) = 0$. Together with the equality $\beta_{L^{2}}(\mathrm{conv}(\{0\} \cup W))=\beta_{L^2}(W)$, this implies estimate \eqref{est-eq-non-2g}, and the proof is complete.
\end{proof}
We follow \cite[Th.~4.3]{MR3072663} (or \cite[Prop.~5.1]{MR4087267}) and \cite[Lem.~5.2]{Cw-Kr-2019} to show that the Landesman--Lazer conditions have geometric consequences in the phase space of the corresponding parabolic semiflows. In our setting, the nonlinearity $f$ is bounded by a function $m \in L^\infty(\mathbb{R}^N)$, which, in particular, allows us to consider sets $W$ that are not necessarily bounded in $H^{1}(\R^{N})$.
\begin{theorem}\label{lem-est2}
Let $F$ be the mapping given by \eqref{Nemytzki_f}, and let $W\subset H^1(\R^N)$ be bounded in $L^\infty(\R^N)$. 
\begin{enumerate}
\item[$(i)$] If the condition $(LL)_+$ holds, then there exist $R > 0$ and $\nu>0$ such that 
$$
\langle F(v + w), v\rangle_{L^2} > \nu
$$
for all $w\in W$ and $v\in X_0$ with $\|v\|_{L^2} \ge R$.
\item[$(ii)$] If the condition $(LL)_-$ holds, then there exist $R > 0$ and $\nu>0$ such that
$$
\langle F(v + w), v\rangle_{L^2} < -\nu
$$
for all $w\in W$ and $v\in X_0$ with $\|v\|_{L^2} \ge R$.
\end{enumerate}
\end{theorem}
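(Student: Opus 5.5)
I will prove $(i)$; $(ii)$ is symmetric (replace $f$ by $-f$ or reverse inequalities). I argue by contradiction. Suppose that there are sequences $w_n\in W$ and $v_n\in X_0$ with $\|v_n\|_{L^2}\to\infty$ and
\[
\liminf_{n}\langle F(v_n+w_n),v_n\rangle_{L^2}\le 0 .
\]
It suffices to show that $\langle F(v_n+w_n),v_n/\|v_n\|_{L^2}\rangle_{L^2}$ has a positive lower bound along a subsequence. Indeed, once the normalized pairing exceeds some $c>0$ for large $\|v\|$, the unnormalized one exceeds $c\|v\|_{L^2}\ge cR$, which gives $\nu$. So I work with $t_n:=\|v_n\|_{L^2}$ and $\varphi_n:=v_n/t_n$ on the unit sphere of $X_0$. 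Since $X_0$ is finite-dimensional, I pass to a subsequence with $\varphi_n\to\varphi\in X_0$, $\|\varphi\|_{L^2}=1$. By the exponential decay estimate \eqref{est-eigen} and equivalence of norms on $X_0$, this convergence also holds in $L^1$ and $L^\infty$, and all elements of the unit sphere of $X_0$ are dominated by a fixed $L^1\cap L^\infty$ function.

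The key pointwise step is as follows. Write
\[
\langle F(v_n+w_n),\varphi_n\rangle = \int f\big(x,t_n\varphi_n(x)+w_n(x)\big)\varphi_n(x)\,dx .
\]
The error from replacing $\varphi_n$ by $\varphi$ outside $f$ is bounded by $\|m\|_{L^\infty}\|\varphi_n-\varphi\|_{L^1}\to0$, using \eqref{bound-f}. So I examine $\int f(x,t_n\varphi_n+w_n)\varphi\,dx$. On $\{\varphi>0\}$, a.e.\ convergence $\varphi_n\to\varphi$ (pass to a subsequence from $L^2$ convergence) together with $\|w_n\|_{L^\infty}\le C_W$ gives $t_n\varphi_n(x)+w_n(x)\to+\infty$. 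Hence
\[
\liminf_n f(x,t_n\varphi_n+w_n)\varphi(x)\ge \check f_+(x)\varphi(x),
\]
and analogously on $\{\varphi<0\}$ the argument tends to $-\infty$, giving $\liminf_n f(\cdot)\varphi\ge \hat f_-\varphi$ because $\varphi<0$ flips limsup to liminf. On $\{\varphi=0\}$ the integrand vanishes. The integrands are dominated by $\|m\|_{L^\infty}|\varphi|\in L^1$, so Fatou's lemma applied to $f\varphi+m|\varphi|\ge0$ yields
\[
\liminf_n\langle F(v_n+w_n),\varphi_n\rangle\ge \int_{\{\varphi>0\}}\check f_+\varphi+\int_{\{\varphi<0\}}\hat f_-\varphi>0
\]
by $(LL)_+$. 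This contradicts the assumption, after noting that the normalized pairing would have $\liminf\le0$.

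The main subtlety is that the limit $\varphi$ depends on the subsequence, so no uniform constant comes directly from a single $\varphi$. The contradiction argument handles this: if for every $R,\nu$ the inequality fails, I extract sequences with $\|v_n\|\to\infty$ and normalized pairing $\le 1/(n t_n)\to0$. These sequences exist because failure for $\nu=1/n$, $R=n$ produces them. This yields the uniform $R,\nu$. Measurability and finiteness of $\check f_+,\hat f_-$ are ensured by \eqref{bound-f}, since $|\check f_\pm|,|\hat f_\pm|\le m$, so the integrals in $(LL)_\pm$ make sense. I expect the only delicate points to be the a.e.\ convergence extraction and the Fatou sign bookkeeping on $\{\varphi<0\}$.
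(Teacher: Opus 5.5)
Your proposal is correct and follows essentially the same route as the paper. Both argue by contradiction, normalize $v_n$ and extract a convergent subsequence in the finite-dimensional space $X_0$, bound the error $\langle F(v_n+w_n),\varphi_n-\varphi\rangle_{L^2}$ by $\|m\|_{L^\infty}\|\varphi_n-\varphi\|_{L^1}$ via \eqref{bound-f}, and apply Fatou's lemma on $\{\varphi>0\}$ and $\{\varphi<0\}$ to contradict $(LL)_+$; the only difference is that you normalize in $L^2$ rather than $L^1$, which keeps the scaling consistent (the paper mixes $\|v_n\|_{L^1}$ with $\|v_n\|_{L^2}$ and writes $\hat f_+$ where $\check f_+$ is meant), and your reduction of $(ii)$ to $(i)$ via $f\mapsto -f$ is valid.
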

\begin{proof} We shall prove part $(i)$; part~$(ii)$ can be proved analogously.
Suppose, to the contrary, that there are sequences $(w_n)$ in $W$ and $(v_n)$ in $X_0$ such that $\|v_n\|_{L^2} \to \infty$ as $n\to +\infty$ and
\begin{equation}\label{g1b}
\<F(v_n + w_n), v_n\>_{L^2} \le 1/n \quad\text{ for all } \ \ n\ge 1.
\end{equation}
Recall that $X_0$ is finite-dimensional and satisfies $X_0 \subset L^1(\mathbb{R}^N)$ -- see \eqref{est-eigen}. For $n \ge 1$, define $z_n := v_n / \|v_n\|_{L^1}$. Passing to a subsequence if necessary, we may assume that $z_n \to \varphi$ in $L^1(\mathbb{R}^N)$ for some $\varphi \in X_0$, and that $z_n(x) \to \varphi(x)$ for a.e. $x \in \mathbb{R}^N$. We may then rewrite \eqref{g1b} as
			\begin{equation}\label{g7b}
				\<F(v_n + w_n), z_n - \varphi \>_{L^2} + \<F( v_n + w_n), \varphi \>_{L^2} \le ( n \| v_n \|_{L^2} )^{-1}, \quad n\ge 1.
			\end{equation}
			By condition \eqref{bound-f}, we have
			\begin{equation}\label{g6b}   
				\left| \<F(v_n + w_n), z_n - \varphi \>_{L^2} \right|
				\le \|m\|_{L^\infty} \| z_n - \varphi \|_{L^{1}}\to 0,\quad n\to\infty.
			\end{equation}
			Define $\Omega_+ := \{ \varphi > 0\}$ and $\Omega_- := \{\varphi < 0\}$. Then
			\begin{equation}
				\begin{aligned}\label{g5b} 
					&\<F(v_n + w_n), \varphi\>_{L^2} = \int_{\R^{N}} f(x, v_n(x) + w_n(x)) \varphi (x) \,d x \\ 
					& \quad = \int_{\Omega_+} f(x, v_n(x) + w_n(x)) \varphi (x) \,d x  + \int_{\Omega_-} f(x, v_n(x) + w_n(x)) \varphi (x) \,d x.
				\end{aligned}
			\end{equation}
			By assumption, there exists $r>0$ such that $\|w\|_{L^{\infty}}\le r$ for all $w \in W$, which yields
\begin{align*}
v_n(x) + w_n(x) =   \|v_n\|_{L^2} z_n(x) + w_n(x) \ge   \|v_n\|_{L^2}  z_n(x) - r
\end{align*}
and hence
\begin{equation*}
				v_n(x) + w_n(x) \to +\infty  \quad \text{ as } n\to \infty,  \ \ \text{ for a.e. } \ x\in \Omega_{+}.
\end{equation*}
			Therefore, by Fatou's lemma,
			\begin{equation}\label{g9b}
				\liminf_{n\to\infty}\int_{\Omega_+} f(x, v_n(x) + w_n(x)) \varphi (x) \,d x \geq
				\int_{\Omega_+} \check f_+(x) \varphi (x) \,d x.
			\end{equation}
			Similarly,
\begin{align*}
v_n(x) + w_n(x) =   \|v_n\|_{L^2} z_n(x) + w_n(x) \le   \|v_n\|_{L^2}  z_n(x) + r,
\end{align*}	
which implies  		
\begin{equation*}
				v_n(x) + w_n(x) \to -\infty  \quad \text{ as } n\to \infty,  \ \ \text{ for a.e. } \ x\in \Omega_{-}.
\end{equation*}			
Hence, an application of Fatou's lemma yields 
			\begin{equation}\label{g4b}
				\liminf_{n\to\infty}\int_{\Omega_-} f(x, v_n(x) + w_n(x)) \varphi (x) \,d x \geq
				\int_{\Omega_-} \hat f_-(x) \varphi (x) \,d x.
			\end{equation}
Combining \eqref{g7b}, \eqref{g6b}, \eqref{g5b}, \eqref{g9b}, and \eqref{g4b} yields
			\begin{equation*}
				\int_{\Omega_+} \hat f_+(x) \varphi (x) \,d x + \int_{\Omega_-} \hat f_-(x) \varphi (x) \,d x \leq 0,
			\end{equation*}
			which contradicts condition~$(LL)_+$, since $\|\varphi\|_{L^1}=1$.
\end{proof} 
		
\begin{remark}
If $V$ is a Kato--Rellich potential, then, in view of the unique continuation 
property (see~\cite{Gossez}), the nodal set $\{x\in\mathbb{R}^N\mid\varphi(x)=0\}$ 
has measure zero for any $\varphi\in\ker(A-\lambda)\setminus\{0\}$. Hence, the 
Landesman--Lazer condition~$\widetilde{(LL)}_{\pm}$ implies~$(LL)_{\pm}$.
\end{remark}
We now use the above geometric properties to determine the Conley index of the maximal invariant set of the semiflow $\Phi$.
\begin{theorem}\label{18072019-0920}
Assume that one of the conditions $(LL)_{\pm}$ holds. Then the maximal invariant set $K_\infty$ for the semiflow $\Phi$ determined by $(P)_\lambda$ is isolated in $H^{1}(\R^{N})$, and its Conley index is given by 
\begin{equation}\label{resonant-Conley-index-formula}
h(\Phi, K_\infty)=\left\{\begin{aligned}
&\Sigma^{d_\infty^+} && \text{ if } \ (LL)_+ \ \text{holds},\\ 
&\Sigma^{d_\infty} && \text{ if } \ (LL)_-  \ \text{holds},
\end{aligned}\right.
\end{equation}
where $d_\infty := d(V,\lambda)$ and $d_\infty^+:=d(V,\lambda) + \dim \ker (-\Delta+V-\lambda)$.
\end{theorem}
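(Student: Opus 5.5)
The plan is to use the homotopy \eqref{eq-diff-g}, i.e.\ the family of semiflows $\Psi^{(s)}$ generated by $G(\cdot,s)$. At $s=1$ it coincides with $\Phi$, since $G(u,1)=F(u)$. At $s=0$ the equation decouples into
\[
\dot u=-Au+\lambda u+PF(Pu)
\]
on $X_0$ and a linear equation on $X_-\oplus X_+$, with $Q$-component satisfying $\dot w=-(A-\lambda)w$. The proof then has three steps: find one bounded set $N$ that isolates, uniformly in $s\in[0,1]$, the maximal bounded invariant sets $K^{(s)}$ of $\Psi^{(s)}$; check admissibility; and compute the index at $s=0$. Admissibility is immediate. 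Lemma~\ref{G-basic-props} gives the uniform Lipschitz bound \eqref{G-Lipschitz}, and Lemma~\ref{prop-cond-g} gives the condensing estimate with constant $\hat\varrho(l)<\gamma$, so Corollary~\ref{cor-admissibility} shows that any bounded set is admissible for the family.

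\textbf{A priori bounds.} Let $u$ be a bounded full solution of $\Psi^{(s)}$. Since $\|G(u,s)\|_{L^\infty}\le M_G$ by \eqref{G-L-infty-bdd}, the equation falls under Section~\ref{sec-3} with $F$ replaced by $G(\cdot,s)$. Propositions~\ref{prop-est-infty-1} and~\ref{prop-est-infty-2} then give $\|Qu(t)\|_{L^\infty}\le R_\infty$, together with the $L^2$ bounds on $Q_-u$ and $Q_+^d u$, with constants independent of $s$ and of $u$. The $L^\infty$ bound also passes to $sQu$ because $s\le 1$. Next we control $Pu$. Write $v(t)=Pu(t)\in X_0$; it satisfies
\[
\dot v=PF(v+sQu).
\]
Taking the $L^2$ product with $v$ and using that $P$ is an orthogonal projection gives
\[
\tfrac12\tfrac{d}{dt}\|v\|_{L^2}^2=\langle F(v+w),v\rangle_{L^2},\qquad w=sQu(t),
\]
and $w$ ranges over the $L^\infty$-bounded set $W=\{w:\|w\|_{L^\infty}\le R_\infty\}$. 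Theorem~\ref{lem-est2} then gives $\frac{d}{dt}\|v\|^2>2\nu$ whenever $\|v\|\ge R$ under $(LL)_+$, and $\frac{d}{dt}\|v\|^2<-2\nu$ under $(LL)_-$. In either case, if $\|v(t_0)\|\ge R$ at some time, then $\|v\|$ grows without bound forward in time (under $(LL)_+$) or backward in time (under $(LL)_-$), contradicting boundedness. Hence $\|Pu(t)\|_{L^2}\le R$ for all $t$. Corollary~\eqref{eq-11-a} then bounds $\|Qu\|_{H^1}$, and since $X_0$ is finite dimensional all these bounds are uniform in $s$. So every $K^{(s)}$ lies in a fixed ball $B_{H^1}(0,\rho)$, and $N=\overline{B_{H^1}(0,\rho+1)}$ is an isolating neighborhood for the whole family. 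In particular $K_\infty=K^{(1)}$ is isolated.

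\textbf{The main obstacle.} The key subtlety is that the $L^\infty$ bound on $Qu$ must be obtained before any $L^2$ control of $Pu$ is available. This is exactly why the $L^\infty$ estimates of Section~\ref{sec-3}, which rest only on \eqref{G-L-infty-bdd}, are used first. Theorem~\ref{lem-est2} is formulated for $L^\infty$-bounded $W$ rather than $H^1$-bounded $W$ for the same reason.

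\textbf{Index at $s=0$.} By homotopy invariance, $h(\Phi,K_\infty)=h(\Psi^{(0)},K^{(0)})$. At $s=0$ the flow is a product of the finite-dimensional flow $\dot v=PF(v)$ on $X_0$ and the hyperbolic linear flow on $X_-\oplus X_+$, so the index is the smash product of their indices. By Theorem~\ref{linear-conley-index} the linear factor contributes $\Sigma^{\dim X_-}=\Sigma^{d(V,\lambda)}$. On $X_0$ the ball $\{\|v\|\le R\}$ is an isolating block. Under $(LL)_+$ the field points outward on its boundary, so the exit set is the whole sphere and the index is $\Sigma^{\dim X_0}$. Under $(LL)_-$ the field points inward, the exit set is empty, and the index is $\Sigma^0$. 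Combining the two factors yields the formula \eqref{resonant-Conley-index-formula}.
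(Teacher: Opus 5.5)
Your proposal is correct and follows the paper's proof essentially step for step. It uses the same homotopy $G(\cdot,s)$, the $L^\infty$ bound on $Qu$ from Propositions~\ref{prop-est-infty-1} and~\ref{prop-est-infty-2} (obtained before any control of $Pu$) fed into Theorem~\ref{lem-est2} to trap $\|Pu\|_{L^2}$, the $H^1$ bounds of Proposition~\ref{Q-boundedness}, homotopy invariance, and the smash-product computation at $s=0$. The differences are cosmetic: you use an $H^1$-ball instead of the product-shaped set $M$ as the common isolating neighborhood, and you leave implicit the continuity of the family $\{\Psi^{(s)}\}$ needed for (H4), which follows from Lemma~\ref{G-basic-props} exactly as in the paper.
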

\begin{proof} 
For each $s \in [0,1]$, let $\Psi^{(s)}$ denote the semiflow on $H^1(\mathbb{R}^N)$ generated by equation~\eqref{eq-diff-g}. 
In view of Lemma~\ref{G-basic-props} and the general continuity properties of the solutions (see \cite[Th.~3.4.1]{Henry}), we infer that the family $\{\Psi^{(s)}\}_{s \in [0,1]}$ is continuous.
Moreover, Lemma~\ref{prop-cond-g} and Corollary~\ref{cor-admissibility} imply that every bounded subset of $H^1(\mathbb{R}^N)$ is admissible with respect to $\{\Psi^{(s)}\}_{s \in [0,1]}$.\\    
\indent Let $u$ be a full solution of the semiflow $\Psi^{(s)}$ for some $s\in[0,1]$ that is bounded in $H^1(\mathbb{R}^N)$. Applying the operator $P$ to the Duhamel formula \eqref{duhamel_eq}, we obtain 
\begin{equation*}
	Pu(t) = S_{A-\lambda}(t-t')Pu(t')+ \int_{t'}^{t} S_{A-\lambda}(t-\tau)PG(u(\tau),s) \,d\tau,\quad  t > t'.
\end{equation*}
Using the inclusion $\ker(A-\lambda) \subset \ker(I - S_{A-\lambda}(t))$, we infer that
$$
Pu(t) = Pu(t')+ \int_{t'}^{t} PF(Pu(\tau)+sQu(\tau)) \,d\tau, \quad t>t',
$$
and consequently
$$
\frac{d}{dt}Pu(t) = PF(Pu(t)+sQu(t))  \ \text{ for all } \ t\in\R.
$$			
Hence, for all $t \in \mathbb{R}$,
\begin{align}\label{eq-1-d4} 
	\frac{1}{2}\frac{d}{dt}\|Pu(t)\|^{2}_{L^{2}} = \left\langle \frac{d}{dt}Pu(t), Pu(t) \right\rangle_{L^{2}} = \langle PF(Pu(t)+sQu(t)), Pu(t) \rangle_{L^{2}}.
\end{align} 
Now observe that in view of Propositions \ref{prop-est-infty-1} and \ref{prop-est-infty-2}, as well as the estimate \eqref{G-L-infty-bdd}, there exists a constant $R_{Q} > 0$ such that 
\begin{align}\label{Q-infty-estimate}
\|Qu(t)\|_{L^{\infty}} \le \|Q_{+}u(t)\|_{L^{\infty}} + \|Q_{-}u(t)\|_{L^{\infty}} \le R_{Q} \ \text{ for all } \ t\in\R.
\end{align}
It then follows from Theorem  \ref{lem-est2} that there exist $\nu>0$ and $R_0>0$ such that
\begin{align}\label{gen-app-of-geom-cond}
\pm \langle F(v+w), v\rangle_{L^2}>\nu \ \text{ for } v\in X_0, \ w\in H^{1}(\R^{N}), \ \text{with } \|v\|_{L^{2}}\geq R_0 \ \text{ and } \|w\|_{L^\infty}\leq R_Q 
\end{align}
provided that condition $(LL)_{\pm}$ holds.

We claim that for any full bounded solution $u:\R\to H^1(\R^N)$ of the semiflow $\Psi^{(s)}$ with $s\in [0,1]$, 
\begin{equation}\label{P-bound-N-set}
\|Pu(t)\|_{L^2} < R_0 \ \mbox{ for all } \ t\in\R.
\end{equation}
Indeed, suppose that for some full bounded solution $u$ and $t_0\in\R$ we have $\|Pu(t_0)\|_{L^2}\geq R_0$.\\
\indent If condition $(LL)_+$ holds, define
$$
I_{+} := \{ t\in [t_0, +\infty) \mid \|P u(\tau)\|_{L^2} \geq R_0 \ \mbox{ for all }\  \tau \in [t_0,t]\}.
$$
Note that $\sup I_{+} = +\infty$. Otherwise, letting $t_{+} := \sup I_{+} < +\infty$, we clearly have $t_{+} \in I_{+}$. Combining \eqref{eq-1-d4}, \eqref{Q-infty-estimate}, and \eqref{gen-app-of-geom-cond}, we obtain
\begin{align*}
\left.\frac{1}{2}\frac{d}{dt}\|Pu(t)\|^{2}_{L^{2}} \right|_{t=t_{+}} & =  \langle PF(Pu(t_{+})+sQu(t_{+})), Pu(t_{+}) \rangle_{L^{2}} \\
& =\langle F(Pu(t_{+})+sQu(t_{+})), Pu(t_{+}) \rangle_{L^{2}} > \alpha,
\end{align*}
which contradicts the definition of $t_{+}$. Hence,
$$
\| Pu(t)\|_{L^2} \geq R_0 \quad \text{ for all } \ t\in [t_0, +\infty).
$$
Using \eqref{eq-1-d4} again, it follows that
\begin{align*}
\frac{1}{2}\frac{d}{dt}\|Pu(t)\|^{2}_{L^{2}} = \langle F(Pu( t)+sQu( t)), Pu( t) \rangle_{L^{2}} > \nu, \quad t \ge t_{0}
\end{align*}
and consequently $u$ cannot be bounded, a contradiction. This proves \eqref{P-bound-N-set} in the case of condition $(LL)_+$. Similarly, if $(LL)_-$ holds, define
$$
I_{-} := \{ t\in (-\infty, t_0] \mid \|P u(\tau)\|_{L^2} \geq R_0 \ \mbox{ for all }\  \tau \in [t,t_0]\}
$$
and observe that $t_{-}:=\inf I_{-}=-\infty$. Otherwise, letting $t_{-}:=\inf I_{-}>-\infty$, by analogous arguments we would have
$$
\left.\frac{1}{2}\frac{d}{dt}\|Pu(t)\|^{2}_{L^{2}} \right|_{t=t_{-}} =  \langle F(Pu(t_{-})+sQu(t_{-})), Pu(t_{-}) \rangle_{L^{2}} <- \nu,
$$
which contradicts the definition of $t_{-}$ and implies that
$$
\frac{1}{2}\frac{d}{dt}\|Pu(t)\|^{2}_{L^{2}} < -\nu \ \text{ for all } \ t\leq t_0.
$$
Hence, $u$ cannot be bounded, yielding a contradiction that proves \eqref{P-bound-N-set} in the case of $(LL)_-$. \\
\indent Now, Proposition \ref{Q-boundedness} together with \eqref{P-bound-N-set} yields $R_\infty > 0$, depending on $R_0$, such that for any bounded solution $u:\mathbb{R} \to H^1(\mathbb{R}^N)$ of $\Psi^{(s)}$ with $s \in [0,1]$, we have
$$ 
\|Q_{+}^{e}u(t)\|_{H^1} < R_\infty \quad \text{for all} \ \ t\in\R.
$$ 
Since the spaces $X_{-}$ and $X_{+}^{d}$ are finite-dimensional, by Proposition \ref{prop-est-infty-2}, we may increase $R_\infty$ if necessary to obtain
$$\|Qu(t)\|_{H^1} < R_\infty \quad \text{for all} \ \ t\in\R.$$
Together with \eqref{P-bound-N-set}, this shows that the set
\begin{equation}\label{isol-nbhd-res-case}
M := \{ u\in H^1(\R^N) \mid \|Pu\|_{L^2} \leq R_0 + 1\ \mbox{ and } \ \|Qu\|_{H^1} \leq R_\infty + 1\} 
\end{equation}
is an isolating neighborhood of the maximal invariant set $K_\infty^{s} := \mathrm{Inv}\,(M,\Psi^{(s)})$ with respect to the semiflow $\Psi^{(s)}$ for any $s\in [0,1]$.\\
\indent Since $M$ is admissible with respect to the family $\{\Psi^{(s)}\}_{s\in[0,1]}$, we can apply the homotopy invariance of the Conley index to obtain
$$
h(\Phi, K_\infty) = h(\Psi^{(1)}, K_{\infty}^{1}) = h(\Psi^{(0)},K_{\infty}^{0}).
$$
Then $\Psi^{(0)}$ is conjugate to the product semiflow $\Phi_P \times \Phi_Q$, where $\Phi_P : X_0 \to X_0$ is generated by $$\dot v(t) = PF(v(t)), \quad t>0$$ and
$\Phi_Q : \mathrm{Im}\, Q \to \mathrm{Im}\, Q$ corresponds to the linear equation
\[
\dot w(t) = -Aw(t) + \lambda w(t), \quad t > 0.
\]
By Theorem \ref{linear-conley-index}, we have $\mathrm{Inv}\,(M_{Q}, \Phi_{Q}) = \{0\}$, where $M_{Q} := \{ u \in X_-\oplus X_{+} \mid \|u\|_{H^{1}}\leq R_\infty\}$, and 
$$
h(\Phi_Q, \{ 0\})= \Sigma^{d_\infty}.
$$
Moreover, by the multiplicative property of the Conley index,
$$
h(\Psi^{(0)},K_{0}) = h(\Phi_P \times \Phi_Q, K_P \times \{0\}) = h(\Phi_P, K_P )\wedge h( \Phi_Q,\{0\}),
$$
where $K_P := \mathrm{Inv}(M_{P}, \Phi_P)$ with $M_{P} := \{ u \in X_0 \mid \|u\|_{L^2}\leq R_0\}$. Using \eqref{gen-app-of-geom-cond}, we see that 
\begin{align*}
\pm \langle F(v), v\rangle_{L^2}>\alpha \quad \text{ for } \ v\in X_0 \text{ with } \|v\|_{L^{2}} = R_0
\end{align*}
provided that condition $(LL)_{\pm}$ is satisfied. This implies that $M_{P}$ is an isolating block for semiflow $\Phi_{P}$ and
\begin{equation*}
h(\Phi_P, K_P) = \left\{\begin{aligned} &\Sigma^{\dim X_0} && \text{ if } (LL)_+ \text{ holds};\\
			    &\Sigma^{0} && \text{ if } (LL)_-  \text{ holds}.
            	\end{aligned}\right.
\end{equation*}
Since
$$
\Sigma^{d_\infty} \wedge \Sigma^0 = \Sigma^{d_\infty} \quad \text{ and } \ \ \Sigma^{d_\infty}\wedge \Sigma^{\dim X_0} = \Sigma^{d_\infty^+},
$$
the above equalities yield index formula \eqref{resonant-Conley-index-formula}.
\end{proof}
		
\begin{proof}[Proof of Theorem \ref{30042019-1204}]
First, observe that by Theorem \ref{non-res-index-formulae}\,(i) the set $K_0=\{0\}$ is isolated and invariant with respect to the semiflow $\Phi$, and 
$$
h(\Phi, K_0) = \Sigma^{d(V-\alpha, \lambda)}.
$$
On the other hand, by Theorem \ref{18072019-0920}, the set $K_\infty$ is bounded, and its Conley index is given by formula \eqref{resonant-Conley-index-formula}. This implies, in particular, that $K_\infty$ is irreducible and $h(\Phi, K_\infty)\neq \bar 0$. Moreover, assumptions~(i) and~(ii) ensure that
$h(\Phi, K_\infty)\neq h(\Phi, K_0)$. Hence, by Theorem \ref{rybakowski-irreducible}, there exists a nonzero solution $u:\R\to H^1(\R^N)$ of $\Phi$ such that either $\alpha(u)=K_0$ or $\omega(u)=K_0$. Since the semiflow $\Phi$ is gradient-like by Proposition \ref{Lapunov-function-property}, we deduce that either $\omega(u)$ or $\alpha(u)$ must contain a nonzero stationary solution. This completes the proof of the theorem.
\end{proof}

\begin{appendices}

\section{Appendix}

\noindent {\bf Sectorial Operators in Banach Spaces}. Let $A: D(A) \subset X \to X$ be a closed operator on a real Banach space $X$. The spectrum $\sigma(A)$ and the resolvent set $\rho(A)$ are defined via its complexification $A_{\mathbb{C}}$ acting on the complexified space $X_{\mathbb{C}} = X \oplus iX$, where $A_{\mathbb{C}}(u + iv) = Au + iAv$ for $u, v \in D(A)$. Specifically, we set $\sigma(A) := \sigma(A_{\mathbb{C}})$ and $\rho(A) := \rho(A_{\mathbb{C}})$.

Assume that $\sigma_{-} := \{\lambda_1, \dots, \lambda_n\} \subset \mathbb{R}$ is a finite set of real isolated eigenvalues of $A$, separated from the remainder of the spectrum, $\sigma_{+} := \sigma(A) \setminus \sigma_{-}$. For each $\lambda_j$, we define the Riesz projection $P_{j,\mathbb{C}}$ on $X_{\mathbb{C}}$ as
\begin{equation}\label{riesz-proj}
    P_{j,\mathbb{C}} := \frac{1}{2\pi i} \int_{\gamma_{j}} (\mu I - A_{\mathbb{C}})^{-1} d\mu,
\end{equation}
where $\gamma_j \subset \rho(A_{\mathbb{C}})$ is a small circle enclosing only $\lambda_j$. Since these contours can be chosen to be symmetric with respect to the real axis, the projections $P_{j,\mathbb{C}}$ restrict to real operators $P_j$ on $X$. Following the spectral theorem (see \cite[Th. 1.5.2]{Henry}), the total projection $P_{-} = \sum_{j=1}^n P_j$ induces a direct sum decomposition $X = X_{-} \oplus X_{+}$, where $X_{+} := \ker P_{-}$ and 
\begin{equation}
    X_{-} = \bigoplus_{j=1}^n X_j, \quad \text{with} \quad X_j = P_{j}(X).
\end{equation}
Both subspaces $X_{-}$ and $X_{+}$ are invariant under $A$, i.e., $A(X_{-}) \subset X_{-}$ and $A(X_{+} \cap D(A)) \subset X_{+}$. If we denote the restrictions of $A$ to these subspaces by $A_{-} = A|X_{-}$ and $A_{+} = A|X_{+} \cap D(A)$, it follows that their spectra satisfy $\sigma(A_{-}) = \sigma_{-}$ and $\sigma(A_{+}) = \sigma_{+}$. 

Next, we recall the definition of a broad class of operators used in this work. A linear, closed and densely defined operator $A: D(A) \subset X \to X$ is said to be \emph{sectorial} if there exist $\phi \in (0, \pi/2)$, $M \ge 1$, and $a \in \mathbb{R}$ such that the sector 
\begin{equation*}
    S_{a,\phi} := \{ \lambda \in \mathbb{C} : \phi \le |\arg(\lambda - a)| \le \pi, \, \lambda \neq a \}
\end{equation*}
is contained in the resolvent set $\rho(A)$, and the following estimate holds:
\begin{equation*}
    \|(\lambda I - A)^{-1}\| \le \frac{M}{|\lambda - a|} \quad \text{for all } \lambda \in S_{a,\phi}.
\end{equation*}
If $A$ is sectorial, then $-A$ is the infinitesimal generator of an analytic semigroup $\{S_A(t)\}_{t \ge 0}$. Furthermore, $A$ is called \emph{positive} if $\mathrm{Re}\,\mu > 0$ for all $\mu \in \sigma(A)$. For a positive sectorial operator, we define the fractional powers $A^{-\alpha}$ for $\alpha > 0$ as
\begin{equation*}
    A^{-\alpha} := \frac{1}{\Gamma(\alpha)} \int_0^\infty t^{\alpha - 1} S_A(t) \, dt.
\end{equation*}
The associated \emph{fractional power space} is $X^\alpha := D(A^\alpha)$ equipped with the graph norm $\|x\|_\alpha := \|A^\alpha x\|$. For further details, we refer the reader to \cite{Henry}, \cite{Pazy}.\\

\noindent {\bf Conley index}. Finally, we briefly recall a version of the Conley index due to Rybakowski (see \cite{rybakowski} or \cite{rybakowski-TAMS}). Let $\Phi \colon [0,+\infty) \times X \to X$ be a semiflow on the space $X$. A continuous map $u \colon J \to X$, where $J \subset \mathbb{R}$ is an interval, is called a \emph{solution of $\Phi$} if
\[
u(t+s) = \Phi(t,u(s))
\]
for all $t \geq 0$ and $s \in J$ such that $t+s \in J$. In particular, if $J=\mathbb{R}$, then $u$ is called a \emph{full solution} of the semiflow $\Phi$. In this case, the $\alpha$- and \emph{$\omega$-limit sets} of $u$ are defined by
\[
\alpha(u) := \left\{ x = \lim_{n\to\infty} u(t_n) \,\middle|\, t_n \to -\infty \right\} \quad\text{and}\quad \omega(u) := \left\{ x = \lim_{n\to\infty} u(t_n) \,\middle|\, t_n \to +\infty \right\},
\]
respectively. For $N \subset X$, we define $\mathrm{Inv}(N,\Phi)$ as the set of all points $x \in N$ for which there exists a full solution $u \colon \mathbb{R} \to X$ with $u(0) = x$ and $u(\mathbb{R}) \subset N$.
A set $K \subset X$ is called \emph{invariant} with respect to the semiflow $\Phi$ if $\mathrm{Inv}(K, \Phi) = K$. The set $K$ is an \emph{isolated invariant set} if there exists a closed set $N \subset X$ such that $K = \mathrm{Inv}(N, \Phi) \subset \mathrm{int}\,N$. In this case, $N$ is called an \emph{isolating neighborhood} of $K$. We say that a set $N \subset X$ is \emph{admissible with respect to $\Phi$} if, for any sequence $(t_n)$ in $[0,+\infty)$ with $t_n \to +\infty$ and any sequence $(x_n)$ in $X$ such that
\[
\{\Phi(t, x_n) \mid t \in [0, t_n]\} \subset N \quad \text{for all } n \ge 1,
\] 
the set $\{\Phi(t_n, x_n) \ | \ n\ge 1\}$ is relatively compact in $X$. 

We say that the family of semiflows $\{\Phi^{(s)}\}_{s \in [0,1]}$ on $X$ is \emph{continuous} if the map
$(t,x,s) \mapsto \Phi^{(s)}(t,x)$
is continuous on $[0,+\infty) \times X \times [0,1]$. Moreover, a set $N \subset X$ is \emph{admissible} with respect to this family if, for any sequences $(t_n)$ in $[0,+\infty)$ with $t_n \to +\infty$, $(x_n)$ in $X$, and $(s_n)$ in $[0,1]$ such that 
\[
\left\{\Phi^{(s_n)}(t,x_n) \mid t \in [0,t_n]\right\} \subset N \quad \text{for all } n \ge 1,
\] 
the set $\left\{\Phi^{(s_n)}(t_n,x_n) \ | \ n \ge 1\right\}$ is relatively compact in $X$.
		
		Let ${\mathcal I}(X)$ denote the family of all pairs $(\Phi, K)$, where $\Phi$ is a semiflow on $X$ and $K \subset X$ is an isolated invariant set for $\Phi$ that admits an admissible isolating neighborhood.
        If $(\Phi, K) \in {\mathcal I}(X)$, then the \emph{Conley homotopy index} $h(\Phi,K)$ of $K$ relative to $\Phi$ is defined by
\[
h(\Phi, K) := [(B/B^-, [B^-])],
\]
where $B$ is an isolating block of $K$ (see \cite{rybakowski}) with exit set $B^- \neq \emptyset$. If $B^- = \emptyset$, we set
\[
h(\Phi, K) := [(B \cup \{a\}, a)],
\]
where $a$ is an arbitrary point outside $B$. In particular, $h(\Phi, \emptyset) = \overline{0}$, where $\overline{0} := [(\{a\}, a)]$.

We now list several properties of the homotopy index. \\[3pt]
\noindent\makebox[9.5mm][l]{(H1)}\parbox[t][][t]{156mm}{If $(\Phi, K)\in {\mathcal I}(X)$ is such that $h(\Phi, K)\neq \overline{0}$, then $K\neq \emptyset$.}\\[5pt]
\noindent\makebox[9.5mm][l]{(H2)}\parbox[t][][t]{156mm}{If $(\Phi, K_1), (\Phi, K_2)\in {\mathcal I}(X)$ and $K_1\cap K_2=\emptyset$, then $(\Phi, K_1\cup K_2)\in {\mathcal I}(X)$ and $$h(\Phi, K_1\cup K_2) = h(\Phi, K_1)\vee h(\Phi, K_2).$$}\\
\noindent\makebox[9.5mm][l]{(H3)}\parbox[t][][t]{156mm}{If $(\Phi_1, K_1)\in {\mathcal I}(X_1)$ and $(\Phi_2, K_2)\in {\mathcal I}(X_2)$, then $(\Phi_1\times \Phi_2, K_1\times K_2)\in {\mathcal I} (X_1\times X_2)$ and $$h(\Phi_1\times \Phi_2, K_1\times K_2) = h(\Phi_1, K_1) \wedge h(\Phi_2, K_2).$$}\\
\noindent\makebox[9.5mm][l]{(H4)}\parbox[t][][t]{156mm}{Let $N \subset X$ be a closed set that is admissible with respect to the continuous family of semiflows $\{\Phi^{(s)}\}_{s \in [0,1]}$ and suppose that $K_s := \mathrm{Inv}(N,\Phi^{(s)}) \subset \mathrm{int}\,N$ for all $s \in [0,1]$. Then
\[
h(\Phi^{(0)}, K_0) = h(\Phi^{(1)}, K_1).
\]}\\
		\noindent  In the case of a linear semiflow, the Conley index can be computed using the following formula.
\begin{theorem} \label{linear-conley-index}{\em (See \cite[Ch. I, Th. 11.1]{rybakowski})}
Assume that a $C_0$ semigroup $\{T(t)\}_{t\geq 0}$ of bounded linear operators on a Banach space $X$ is hyperbolic (see, e.g., \cite[Def. V.1.14]{Engel}). If the dimension $\dim X_u=k$ of the unstable subspace $X_u$ \footnote{\ The unstable space $X_u$ is equal to $\ker P$, where $P$ is the spectral projection corresponding to $\{\lambda\in\sigma(T(t_0))\mid |\lambda|<1\}$ for some $t_0>0$.} is finite, then $\Phi\colon [0,+\infty) \times X\to X$, given by $\Phi(t,x):=T(t)x$, is a semiflow on $X$, $\{ 0\}$ is the maximal bounded invariant set with respect to $\Phi$, $(\Phi, \{ 0 \})\in {\mathcal I}(X)$ and $h(\Phi, \{ 0\})=\Sigma^k$ where $\Sigma^k=[(S^k, \overline s)]$  is the homotopy type of the pointed $k$-dimensional sphere.\hfill $\square$
\end{theorem}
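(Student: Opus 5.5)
The plan is to exploit the hyperbolic splitting directly and build an explicit isolating block whose quotient by its exit set is a pointed sphere. By hyperbolicity (in the sense of \cite[Def.~V.1.14]{Engel}) we have a decomposition $X = X_u \oplus X_s$ into closed $T(t)$-invariant subspaces, with $\dim X_u = k$. On $X_u$ the semigroup extends to a group with $\|T(t)x\| \le M e^{\beta t}\|x\|$ for $t \le 0$, and on $X_s$ we have $\|T(t)x\| \le M e^{-\beta t}\|x\|$ for $t \ge 0$, for some $M \ge 1$ and $\beta > 0$.

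First I would pass to an adapted equivalent norm $|x| := \max\{|x_u|, |x_s|\}$. On $X_s$ take $|x_s| := \sup_{t \ge 0} e^{\beta' t}\|T(t)x_s\|$, and on $X_u$ take $|x_u| := \sup_{t \le 0} e^{-\beta' t}\|T(t)x_u\|$, with $0<\beta'<\beta$. In this norm
\[
|T(t)x_s| \le e^{-\beta' t}|x_s| \quad\text{and}\quad |T(t)x_u| \ge e^{\beta' t}|x_u| \quad\text{for } t \ge 0 .
\]
That $\Phi(t,x) = T(t)x$ is a semiflow is immediate from the $C_0$ property.

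Next I would show that $\{0\}$ is the maximal bounded invariant set. Let $u$ be a bounded full solution. For the stable part, $u_s(t) = T(t-t')u_s(t')$ gives $|u_s(t)| \le e^{-\beta'(t-t')}\sup|u|$; letting $t' \to -\infty$ yields $u_s \equiv 0$. For the unstable part, $|u_u(t)| \le e^{-\beta'(t'-t)}|u_u(t')|$ for $t' > t$, and letting $t' \to +\infty$ yields $u_u \equiv 0$.

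For admissibility, take $N := \{|x_u| \le 1,\ |x_s| \le 1\}$. If $T(t)x_n \in N$ for $t \in [0,t_n]$ with $t_n \to \infty$, then the unstable components of $T(t_n)x_n$ lie in a bounded subset of the $k$-dimensional space $X_u$. The stable components satisfy $|T(t_n)(x_n)_s| \le e^{-\beta' t_n} \to 0$. Hence $\{T(t_n)x_n\}$ is relatively compact. Since $\mathrm{Inv}(N,\Phi) = \{0\} \subset \mathrm{int}\, N$, this gives $(\Phi,\{0\}) \in \mathcal I(X)$.

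To compute the index I would verify that $B := N$ is an isolating block with exit set $B^- = \{x \in B : |x_u| = 1\}$. Points with $|x_u| = 1$ leave $B$ immediately, because $|T(t)x_u| \ge e^{\beta' t} > 1$ for $t > 0$. Points with $|x_u| < 1$ stay in $B$ for a short positive time, since $|x_s|$ is nonincreasing. For $k \ge 1$ this gives
\[
B/B^- \cong (D^k \times D_s)/(S^{k-1} \times D_s),
\]
where $D_s$ is the closed unit ball of $X_s$. The linear contraction $(x_u, x_s, \theta) \mapsto (x_u, (1-\theta)x_s)$ preserves $B^-$ and deformation retracts this quotient onto $D^k/S^{k-1} \simeq S^k$, with the base point $[B^-]$ going to the base point. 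If $k = 0$, then $B^- = \emptyset$ and $B \cup \{a\}$ retracts to $S^0$ in the same way. Thus $h(\Phi,\{0\}) = \Sigma^k$.

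The main obstacle is checking Rybakowski's precise isolating-block axioms: strict egress at $B^-$, no internal tangencies, and closedness of the exit set, all relative to the adapted norm. Monotonicity of $t \mapsto |T(t)x_u|$ must be strict. I would secure this by using the weight $\beta' < \beta$ in the definition of the adapted norms, which makes the required inequalities strict.
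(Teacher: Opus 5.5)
Your proof is correct. Note that the paper gives no proof of this statement: it quotes Rybakowski's theorem and uses it as a black box in Theorems~\ref{non-res-index-formulae} and~\ref{18072019-0920}. So the only comparison is with the citation. What you wrote is the standard self-contained argument, taking the dichotomy estimates directly from the Engel--Nagel definition, building adapted norms, and using the product block $B=\{|x_u|\le 1,\ |x_s|\le 1\}$ with $B^-=\{x\in B : |x_u|=1\}$. Its advantage is that it makes the admissibility and the block structure explicit rather than deferring to the book.

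The only verifications you left implicit are the backward-time halves of Rybakowski's strict egress and strict ingress conditions. They follow from the same inequalities. Let $\sigma$ be any solution with $\sigma(0)=x$ defined on some $[-\delta_1,0]$, and fix $t<0$. Then $T(-t)\sigma(t)=x$, the spectral projections commute with $T$, and $T(-t)$ is invertible on $X_u$. Hence $|\sigma(t)_u|\le e^{\beta' t}|x_u|$ and $|\sigma(t)_s|\ge e^{-\beta' t}|x_s|$. It follows that:
\begin{itemize}
\item points with $|x_u|=1>|x_s|$ are strict egress points;
\item points with $|x_s|=1>|x_u|$ are strict ingress points;
\item the corners $|x_u|=|x_s|=1$ are bounce-off points.
\end{itemize}
Thus $\partial B=B^i\cup B^-$ with $B^-$ closed, which is exactly the block axiom. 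Moreover $\mathrm{Inv}(B)\subset\mathrm{int}\,B$ is then automatic.

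Two further small remarks. First, the strict inequality $|T(t)x_u|>|x_u|$ for $t>0$ and $x_u\neq 0$ comes from $\beta'>0$, not from $\beta'<\beta$; any $0<\beta'\le\beta$ gives an equivalent norm with the needed estimates. Second, the deformation $(x_u,x_s,\theta)\mapsto(x_u,(1-\theta)x_s)$ descends to $B/B^-$ because it maps $B^-\times[0,1]$ into $B^-$ and $q\times\mathrm{id}_{[0,1]}$ is a quotient map. Alternatively, you can simply use the induced maps $D_u/S_u\to B/B^-\to D_u/S_u$ to get a pointed homotopy equivalence.
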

\indent An isolated invariant set $K$ with respect to the semiflow $\Phi$ is called 
\emph{irreducible} if there are no isolated invariant sets $K_1$ and $K_2$ 
such that $K=K_1 \cup K_2$, $K_1\cap K_2=\emptyset$ and
\[
h(\Phi, K_1)\neq \overline{0} \quad \text{and} \quad h(\Phi, K_2)\neq \overline{0}.
\]
It is known that $K$ is irreducible if $K$ is connected, 
or $h(\Phi, K) = \overline{0}$, 
or $h(\Phi, K) = \Sigma^k$ for some integer $k \geq 0$.

\begin{theorem}[See {\cite[Th.~1.11.6]{rybakowski}}]\label{rybakowski-irreducible}
If $K_0\subset K\subset X$ are isolated invariant sets with respect to the semiflow $\Phi$ such that $K$ is irreducible, and
\[
\overline{0} \neq h(\Phi, K_0)\neq h(\Phi, K)\neq \overline{0},
\]
then there exists a full solution $u\colon \mathbb{R}\to K$ such that 
$u(\mathbb{R})\not\subset K_0$ and either $\alpha(u)\subset K_0$ or 
$\omega(u)\subset K_0$.
\end{theorem}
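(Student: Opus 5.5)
This is Rybakowski's theorem, quoted from \cite[Th.~1.11.6]{rybakowski}. Within this paper it is used as a black box, so the natural course is to cite it. If I were to reconstruct the argument, I would argue by contradiction using the attractor--repeller decomposition and the long exact sequence (sum formula) of the Conley index.

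Assume that no full solution $u\colon\mathbb R\to K$ with $u(\mathbb R)\not\subset K_0$ has $\alpha(u)\subset K_0$ or $\omega(u)\subset K_0$. The first step is to show that $K_0$ is then isolated inside $K$ in a strong sense. Every full solution in $K$ passing near $K_0$ but not lying in $K_0$ would, by admissibility (compactness of $K$) and a limiting argument, produce a solution whose $\alpha$- or $\omega$-limit set lies in $K_0$. Consequently, the set $K_1:=\{x\in K \mid \text{the full solution through } x \text{ in } K \text{ stays away from } K_0\}$ is closed. Moreover, $K=K_0\cup K_1$ with $K_0\cap K_1=\emptyset$, and $K_1$ is invariant.

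The second step is to check that $K_0$ and $K_1$ are isolated invariant sets with admissible isolating neighborhoods. For these I would take small disjoint neighborhoods intersected with an isolating neighborhood of $K$. The sum property (H2) then gives $h(\Phi,K)=h(\Phi,K_0)\vee h(\Phi,K_1)$.

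The third step uses irreducibility of $K$: since $h(\Phi,K_0)\neq\overline 0$, we must have $h(\Phi,K_1)=\overline 0$. Hence $h(\Phi,K)=h(\Phi,K_0)\vee\overline 0=h(\Phi,K_0)$, contradicting the hypothesis $h(\Phi,K_0)\neq h(\Phi,K)$. The condition $h(\Phi,K)\neq\overline 0$ guarantees that $K$ is nonempty (H1); together with $h(\Phi,K)\ne h(\Phi,K_0)$ it also rules out $K=K_0$.

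The main obstacle is the first step: proving that the absence of connecting orbits forces $K_0$ to be \emph{open} in $K$ relative to full solutions. Otherwise there could be orbits in $K\setminus K_0$ accumulating on $K_0$ only along sequences of times, not as limit sets. To handle this, I would use an isolating neighborhood $N_0$ of $K_0$. Suppose $x_n\in K\setminus K_0$ with $x_n\to K_0$. Each orbit through $x_n$ must leave $N_0$ both forward and backward, for otherwise its $\omega$- or $\alpha$-limit set would lie in $\mathrm{Inv}(N_0)=K_0$. Taking the exit times and passing to limits via admissibility yields a full solution in $K$ that stays in $N_0$ on a half-line but is not in $K_0$, which is the desired contradiction. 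This is exactly Rybakowski's argument, so in the paper I would simply cite \cite[Th.~1.11.6]{rybakowski}.
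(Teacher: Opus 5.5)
Your proposal is correct and matches the paper, which gives no proof of its own and simply cites Rybakowski's Theorem~1.11.6. Your reconstruction (relative openness of $K_0$ in $K$ via the exit-time limiting argument, then (H2) combined with irreducibility forcing $h(\Phi,K)=h(\Phi,K_0)$) is the standard argument. Two details are left implicit. First, the forward exit times from $N_0$ tend to $+\infty$, by continuity of $\Phi$ and positive invariance of $K_0\subset\mathrm{int}\,N_0$. Second, a solution in $K$ through a point of $K\setminus K_0$ can never meet $K_0$, which is what makes $K_1=K\setminus K_0$ invariant.
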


\end{appendices}

\vspace{10mm}

\noindent {\bf Declarations}\\

\noindent {\bf Conflict of Interest:} The authors declare that they have no conflict of interest.\\

\noindent {\bf Data Availability:} Data sharing is not applicable to this article as no datasets were generated or analysed during the current study.\\

\end{document}